\newcommand{\vep}{\varepsilon}
\newcommand{\R}{\mathbb R}
\newcommand{\CC}{\mathbb C}
\definecolor{HW}{rgb}{0,0,0}
\definecolor{HW1}{rgb}{0,0,0}
\definecolor{HW2}{rgb}{0,0,0}
\definecolor{HW3}{rgb}{0,0,0}
\definecolor{HW4}{rgb}{0,0,0}
\numberwithin{equation}{section}
\numberwithin{figure}{section}
\numberwithin{table}{section}
\title{Uniform Resolvent Estimates for Subwavelength Resonators:\\
The Minnaert Bubble Case}
\author{Long Li \thanks {RICAM, Austrian Academy of Sciences, A-4040, Linz, Austria (long.li@ricam.oeaw.ac.at)} \; and Mourad Sini \thanks{RICAM, Austrian Academy of Sciences, A-4040, Linz, Austria (mourad.sini@oeaw.ac.at)}}
\newtheorem{theorem}{Theorem}[section]
\newtheorem{corollary}[theorem]{Corollary}
\newtheorem{lemma}{Lemma}[section]
\newtheorem{remark}{Remark}
\newtheorem{definition}{Definition}
\begin{document}
\date{}
\maketitle
\begin{abstract}
Subwavelength resonators are small scaled objects that exhibit contrasting medium properties (either in intensity or sign) while compared to the ones of a uniform background. Such contrasts allow them to resonate at specific frequencies. There are two ways to mathematically define these resonances. First, as the frequencies for which the related system of integral equations is not injective. Second, as the frequencies for which the related resolvent operator of the natural Hamiltonian, given by the wave-operator, has a pole. 
\bigskip

In this work, we consider, as the subwavelength resonator, the Minneart bubble. 
We show that these two mentioned definitions are equivalent. Most importantly, 
\begin{enumerate}
\item we derive the related resolvent estimates which are uniform in terms of the size/contrast of the resonators. As a by product, we show that the resolvent operators have no scattering resonances in the upper half complex plane while they exhibit two scattering resonances in the lower half plane which converge to the real axis, as the size of the bubble tends to zero. As these resonances are poles of the natural Hamiltonian, given by the wave-operator, and have the Minnaert frequency as their dominating real part, this justifies calling them Minnaert resonances.

\item we derive the asymptotic estimates of the generated scattered fields which are uniform in terms of the incident frequency and which are valid everywhere in space (i.e. inside or outside the bubble). 

\end{enumerate}
The dominating parts, for both the resolvent operator and the scattered fields, are given by the ones of the point-scatterer supported at the location of the bubble. In particular, these dominant parts are non trivial (not the same as those of the background medium) if and only if the used incident frequency identifies with the Minnaert one.

\vspace{.2in}
{\bf Keywords}: Subwavelength resonators, scattering resonances, resolvent, uniform estimates, Minnaert frequency.
\end{abstract}

\tableofcontents

\section{Introduction and statement of the main results}

\subsection{The Mathematical model}

 Acoustic wave propagation in bubbly media involves complex interactions governed by the resonant behavior of gas bubbles in a liquid. Theses resonant phenomena underpin a wide range of applications in acoustic metamaterials, underwater acoustics, medical ultrasonic imaging, and oceanography. In this work, we focus on the linearized model, refereed to as a Minnaert bubble model, which captures the essential features of wave propagation in such media; further details can be found in \cite{C-M-P-T-1, C-M-P-T-2}. To proceed, the notation and preliminaries needed for the mathematical formulation of the Minnaert bubble model are introduced below.

Let $y_0$ be any fixed point in $\R^3$. For any $\vep>0$, define $\Omega_\vep:= \{x: x=y_0+\vep(y-y_0), y\in \Omega\}$ and $\Gamma_\vep:=\partial \Omega_\vep$. Here, $\Omega\subset \mathbb R^3$ is an open bounded and connected domain with a $C^2$-smooth boundary $\Gamma:=\partial \Omega$. Let $\Omega_\vep \subset \R^3$ denote a micro-bubble embedded in the homogeneous background medium (see Figure \ref{fi1} for the geometric setting of $\Omega$ and $\Omega_\vep$). The acoustic properties of the medium generated by $\Omega_\vep$ and the homogeneous background are characterized by the mass density $\rho_\vep$ and the bulk modulus $k_\vep$, where $\rho_\vep$ and $k_\vep$ are defined by 
\begin{align} \label{eq:41}
\rho_\vep(x) := 
\begin{cases}
\rho_0,               &  x \in \R^3 \backslash \Omega_\vep, \\
{\rho_1}{\vep^{2}},  & x \in \Omega_\vep,
\end{cases}
\quad 
k_\vep(x):=
\begin{cases}
k_0,            &  x \in \R^3 \backslash \Omega_\vep, \\
k_1 \vep^{2},  & x \in \Omega_\vep.
\end{cases}
 \end{align}
Here, $\rho_0, k_0, \rho_1$ and $k_1$ are all positive real numbers. 
\begin{figure}[htbp]
\centering
\includegraphics{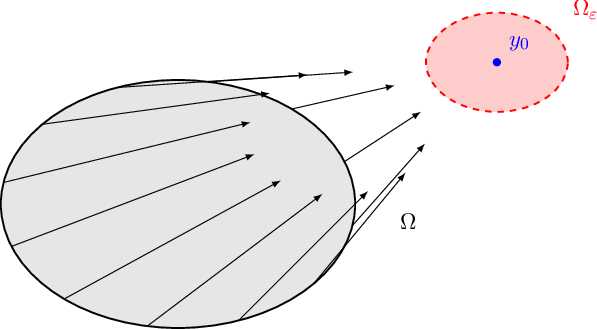}
\caption{Geometric setting of $\Omega$ and $\Omega_\vep$}\label{fi1}
\end{figure}
We use a time-harmonic non-vanishing acoustic wave $u_\omega^{in} $ as an incoming incident wave onto $\Omega_\vep$, i.e., a solution of 
\begin{align*}
\nabla \cdot \frac{1}{\rho_0} \nabla u_\omega^{in}  + \omega^2 \frac{1}{k_0} u^{in}_\omega = 0 \quad \; \textrm{in}\; \R^3,
\end{align*}
where $\omega >0$ is a given incident frequency. For instance, $u_\omega^{in}$ could be a plane wave or a Herglotz wave, which is a superposition of plane waves. Then the scattering of the time-harmonic acoustic waves by the micro-bubble can be mathematically formulated as the problem of finding the total field $u_{\omega,\vep} $ such that 
\begin{align}
&\nabla \cdot \frac 1 {\rho_\vep} \nabla u_{\omega,\vep} +\omega^2 \frac{1}{k_\vep}u_{\omega,\vep} = 0\; \quad\quad\quad\;\;\;\;\; \;\;\;\;\;\; \;\; \; \;\; \; \text{in}\; \R^3,\label{eq:1}\\
& u_{\omega,\vep} = u^{sc}_{\omega,\vep} + u^{in}_\omega \;\;\;\;\;\;\; \quad \qquad \qquad \qquad \qquad \qquad \text{in}\; \mathbb R^3,\\
&\lim_{|x|\rightarrow +\infty}\left(\frac{x}{|x|}\cdot\nabla-i\frac{\omega}{c_0}\right)u^{sc}_{\omega,\vep} = 0. \label{eq:2}
\end{align}
Here, $c_0:=\sqrt{k_0/\rho_0}$ denotes the speed of sound in the background medium, and $\nu$ denotes the outward normal to $\Gamma_\varepsilon$. The equation \eqref{eq:1} is understood as 
\begin{align*}
& \Delta u^+_{\omega,\vep} + \frac{\omega^2\rho^2_0}{k^2_0} u^+_{\omega,\vep} = 0  \qquad\qquad\qquad\;\;\;\;\; \;\;\;\;\;\; \;\; \; \; \;\; \; \text{in}\;\; \R^3 \backslash\overline{\Omega_\vep},\\
& \Delta u^-_{\omega,\vep} + \frac{\omega^2\rho^2_1}{k^2_1} u^-_{\omega,\vep} = 0 \qquad\qquad \;\quad\;\; \;\;\;\;\;\; \;\;\;\;\;\; \;\; \; \; \;\; \; \text{in}\;\; \Omega_\vep, \\
& u^+_{\omega,\vep} = u^-_{\omega,\vep}, \quad  \frac{1}{\rho_0}\partial_\nu u^+_{\omega,\vep} = \frac{1}{\rho_1\vep^2}\partial_\nu u^-_{\omega,\vep}\qquad \;\;\;\;\;\;\;\;\;\text{on}\; \Gamma_\vep. 
\end{align*}
The unique solvability of the above scattering problem \eqref{eq:1}--\eqref{eq:2} for fixed $\vep$ is well known (see, e.g., \cite{DK19, N01}).

The distribution of the {scattering resonances} of general scatterers, i.e. the eventual poles of the related resolvent operators, has been extensively studied, see for instance \cite{J19, LSW, P-V:1919, T-Z-1998}, with the references therein, and the book \cite{DM} for the theoretical studies. The case of a fixed-size bubble with moderate contrast constitutes a specific example within this broader framework. For studies focusing on microdisks, we refer to \cite{BDM-21,Cap12,HPV07}. The computational aspects of scattering resonances are also considered and studied, see \cite{G-H-R:23, LSW, M-N-N:17, M-S:19} and the cited literature therein.

In the present work, we deal with subwavelength resonators, i.e. small but highly contrasting heterogeneities, in the regime (\ref{eq:41}) where the parameter $\vep$ is small. In practice, the mass density and the bulk modulus are very small quantities. Therefore, the bubbles are designed, with a chosen radius $\epsilon$, so that the mass density and the bulk modulus scale as $\epsilon^2$. The $\vep^2$ scaling of the mass density and the bulk modulus in \eqref{eq:41} ensures that the resulting subwavelength resonance frequency is of order $O(1)$ (see formula \eqref{eq:45}). Such a scaling enables the manipulation of resonance phenomena at accessible frequencies, facilitating applications in wave control and materials engineering, see for instance \cite{AFGLZ-17, AZ-17, MS-241,MS-242}.
We believe that our argument can be similarly applied with less effort to other subwavelength resonators that have moderate mass density and large bulk modulus, where a sequence of resonances will be excited (see, e.g., \cite{DGS-21, MMS-18}). This is because the analysis of Minnaert bubbles is more involved, as we have to handle both operators appearing in the used Lippmann-Schwinger equations.

It should be remarked that wave propagation in high-contrast media with small inclusions is also intimately linked to cloaking via transformation optics, where the effects of the small inhomogeneities  are used to assess the effectiveness of approximate cloaking through change-of-variables techniques. For related results and developments in this direction, we refer the reader to \cite{Cap12, KSW08, Ngu12, NT19, NV09, NV12} and the references therein.

We conclude this section by briefly outlining the structure of the remainder of the introduction. Sections \ref{section:1.2} and \ref{section:1.3} summarize the uniform asymptotic results for the acoustic fields (Theorem \ref{th:1}) and the resolvents (Theorems \ref{th:2} and \ref{th:6}), respectively, demonstrating the contribution of Minnaert resonances. Section \ref{section:1.4} compares our results with related works and highlights the main contributions.

\subsection{The Minnaert frequency and the acoustic fields} \label{section:1.2}
Based on the Lippmann-Schwinger equation (see \cite{DGS-21}), the total field $u_{\omega,\vep}$ has the following integral representation 
\begin{align}\label{eq:17}
& u_{\omega,\vep}(x)= u_\omega^{in}(x) +\left(\frac{1}{c^2_1} - \frac{1}{c^2_0} \right)\omega^2\int_{\Omega_\vep} \frac{e^{i{\omega}|x-y|/c_0}}{4\pi|x-y|}u_{\omega,\vep}(y)dy\notag\\
&\quad\quad \quad \;-\left(\frac{\rho_0}{\rho_1\vep^2}-1\right)\int_{\Gamma_\vep}\frac{e^{i{\omega}{|x-y|}/c_0}}{4\pi|x-y|} \partial_\nu u_{\omega,\vep}(y)d\sigma(y) , \quad x\in \R^3 \backslash \Gamma_\vep,
 \end{align}
where $\partial_\nu u_{\omega,\vep}(x):=\lim_{\eta\rightarrow +0} \nu(x) \cdot \nabla u_{\omega,\vep}(x-\eta\nu(x)),\; x\in\Gamma_\vep$ and $c_1: = \sqrt{k_1/\rho_1}$ denotes the speed of sound in the bubble. Based on the above integral expression, it is evident that the total field $u_{\omega,\vep}$ in $\R^3\backslash \Gamma_\vep$ can be fully computed using the value $u_{\omega,\vep}$ within $\Omega_\vep$ and the normal derivative $\partial_\nu u_{\omega, \vep}$ on $\Gamma_\vep$. These two quantities are determined by the succeeding system of integral equations
\begin{align}\label{eq:44}
&u_{\omega,\vep}(x)= u_\omega^{in}(x) +\left(\frac{1}{c^2_1}-\frac{1}{c_0^2}\right)\omega^2\left(N_{\Omega_{\vep},
{\omega}/{c_0}} u_{\omega,\vep}\right)(x) \notag\\ 
&\qquad \;\;\;\;\qquad \qquad -\left(\frac{\rho_0}{\rho_1\vep^2}-1\right)\int_{\Gamma_\vep}\frac{e^{i{\omega}{|x-y|}/c_0}}{4\pi|x-y|} \partial_\nu u_{\omega,\vep}(y)d\sigma(y), \quad
 x\in \Omega_\vep
\end{align}
and 
\begin{align} \label{eq:49}
&\frac{\rho_0}{\rho_1\vep^2}\left(\frac 12\left(1 + \frac {\rho_1\vep^2} {\rho_0}\right) + \left(1-\frac {\rho_1\vep^2} {\rho_0}\right)K_{\Gamma_\vep, \omega/c_0}^*\right) \partial_\nu u_{\omega,\vep}  \notag\\
& \qquad \qquad \qquad \qquad \qquad \qquad \qquad = \partial_\nu u_\omega^{in} + \left(\frac{1}{c^2_1}- \frac{1}{c^2_0}\right)\omega^2\partial_\nu N_{\Omega_{\vep},\omega/c_0} u_{\omega,\vep} \quad  \textrm{on}\; \Gamma_\vep.
\end{align}
Here, the Newtonian operator $N_{\Omega_{\vep},\omega}$ is defined by
\begin{align*}
N_{\Omega_\vep,\omega}: L^2(\Omega_\vep) \rightarrow H_{\textrm{loc}}^2(\R^3), \quad \left(N_{\Omega_\vep,\omega}\phi\right)(x) &:= \int_{\Omega_\vep}\frac{e^{i\omega|x-y|}}{4\pi|x-y|}\phi(y)dy, \quad x\in \R^3,
\end{align*}
and the surface-type operator $K_{\Gamma_\vep,\omega}^*$ is defined by 
\begin{align*}
K_{\Gamma_\vep,\omega}^*: H^{-\frac12}(\Gamma_\vep) \rightarrow H^{\frac 12}(\Gamma_\vep), \quad \left(K_{\Gamma_\vep,\omega}^{*}\phi\right)(x) &:= \partial_{\nu_x}\int_{\Gamma_\vep} \frac{e^{i\omega|x-y|}}{4\pi|x-y|}\phi(y)d\sigma(y),\quad x\in \Gamma_\vep.
\end{align*}
With $\Gamma_\vep$ replaced by $\Gamma$, we write $K^*_{\Gamma, \omega}:= K^*_{\omega}$.
We note that equation \eqref{eq:49} is derived by applying the outward normal derivative to both sides of equation \eqref{eq:17} at any point $x\in \Gamma_\vep$ and using the jump relations of the double layer potential. 

When the size $\vep$ is much smaller than 1, the bubble exhibits high contrast in both its mass density and bulk modulus compared to the homogeneous background medium. It is well-known that this high contrast allows the bubble to resonate at a certain incident frequency, known as the Minnaert frequency, thereby amplifying the scattered field $u_{\omega,\vep}^{sc}$. This phenomenon can be intuitively observed from the integral equations \eqref{eq:17}--\eqref{eq:49}. As $K_{\Gamma_\vep,\omega}^*$ scales approximately as $K_{\Gamma_\vep,\omega}^* \approx -\mathbb I/2$ when $\vep \rightarrow +0$, selecting an appropriate value of $\omega$ would excite the eigenvalue $-1/2$ of $K^*_{0}$, generating a singularity in \eqref{eq:49}. This leads to a very large solution of the system \eqref{eq:17}--\eqref{eq:49}. Mathematically, \cite{AZ18} rigorously derived for the first time a formula for the Minnaert frequency of arbitrarily shaped bubbles by employing layer potential techniques and Gohberg-Sigal theory. They further obtained the asymptotic approximation of the bubble in the far field zone, demonstrating the enhancement of scattering at the Minnaert frequency. Such enhancement was used in different topics ranging from imaging to materials sciences, see \cite{ACCS-20, AFGLZ-17, AZ-17, DGS-21, GS-21, Mukh-Si:2023, SS-2024}. Recently, the authors of \cite{MPS} derived the asymptotic expansion of the scattered field uniform in space (both at near and far zones) by using the resolvent analysis of related frequency-dependent Hamiltonian of Schr\"{o}dinger type. However, the global-in-space asymptotic expansion in \cite{MPS} necessitates an additional frequency constraint, specifically, the incident frequency needs to be outside a narrow vicinity of the Minnaert frequency.

In the current work, we are interested in the uniform asymptotic expansion of the scattered field, both in space and frequency.
Let
\begin{align} \label{eq:45}
\omega_M:= \sqrt{\frac {\mathcal C_\Omega k_1}{|\Omega|\rho_0}}
\end{align}
denote the related Minnaert frequency generated by the micro-bubble,
where $\mathcal C_\Omega$, defined by 
\begin{align} \label{eq:74}
\mathcal C_\Omega:= \int_\Gamma \left(S^{-1}_01\right)(x) d\sigma(x),
\end{align}
represents the capacitance of $\Omega$. Here, $S_0^{-1}$ denotes the inverse of the single layer boundary operator with a kernel of $1/{4\pi |x-y|}$. 
We shall prove
\begin{theorem}\label{th:1}
Let $I\subset \R_+$ be a bounded interval containing $\omega_M$ given by \eqref{eq:45}. Assume that $\alpha > 1/2$ and $\vep>0$. We have
\begin{align}\label{eq:4}
    u^{sc}_{\omega,\vep}(x) = \frac{\varepsilon \omega^2\mathcal C_\Omega}{\omega^2_M-\omega^2-i\varepsilon\frac{\omega^3 \mathcal C_\Omega}{4\pi c_0}}u_\omega^{in}(y_0)\frac{e^{i{\omega}|x-y_0|/c_0}}{4\pi|x-y_0|} + u^{res}_{\omega,\vep}(x)
\end{align}
with 
\begin{align}\label{eq:8}
\|u^{res}_{\omega,\vep}\|_{L^2_{-\alpha}(\mathbb R^3)} \le C_{d_{I,\max},d_{I,\min}}\frac{\varepsilon^{3/2}}{\left|\omega^2_M-\omega^2-i\varepsilon\frac{\omega^3 \mathcal C_\Omega}{4\pi c_0}\right|}, \quad \vep \rightarrow 0,
\end{align}
holding uniformly with respect to all $\omega \in I$.
Here, $d_{I,\max}:= \max_{z\in I} |z|$, $d_{I,\min}:=\min_{z\in I} |z|$ and $C_{d_{I,\max},d_{I,\min}}$ is a constant independent of $\vep$ and $\omega$. In addition, the weighted space $L_{-\alpha}^2(\R^3)$ is defined by $L_{-\alpha}^2(\R^3) :=\left\{u\in L^2_{\textrm{loc}}\left(\R^3\right): (1+|x|^2)^{-\alpha/2} u(x) \in L^2\left(\R^3\right)\right\}$.
\end{theorem}

The above theorem provides, for the first time, the asymptotic expansion of the scattered field uniform in space and frequency. From this result, it is evident that there is a scattering enhancement near the Minnaert frequency, accompanied by a transition from asymptotically trivial to non-trivial scattering as $\omega$ approaches to the the Minnaert frequency $\omega_M$. Notably, since the scattered field satisfies Sommerfeld radiation condition, our result can also be conveniently expressed in the near and far field zones. A key reason why we could avoid assuming the incident frequency $\omega$ to be away from $\omega_M$, as in \cite{MPS}, is that we utilize a novel operator representation \eqref{eq:72} based on the spectral properties of $K^*_0$ to estimate the inverse of operators instead of using Born series inversion methods (see the paragraph before Lemma \ref{le:2}) for more explanations.

Our uniform-in-space asymptotic expansions are uniformly valid with respect to the frequency only in any compact interval, i.e. our analysis doesn't cover the high-frequency regime. This limitation arises from the estimation of inverse of the operator $1/2 (1+\rho_1\vep^2/\rho_0)\mathbb I + (1/2-{\rho_1}\vep^2/{\rho_0})K_{\vep \omega}^*$, which relies on an expansion of $K_z^*$ with respect to the complex parameter $z$. To ensure that higher-order terms in $\vep$ negligible compared, we require that the frequency $\omega$ stays bounded. Moreover, since we consider the non-attenuation medium (similar to the loseloss layer setting in \cite{Ngu12}), the field may exhibit different behaviors, potentially due to the presence of high-frequency resonances.

\subsection{The Minnaert frequency and the resolvent of the acoustic propagator} \label{section:1.3}

\subsubsection{The associated scaled Hamiltonian}

Given $\vep >0$, consider the following natural Hamiltonian $H_{\rho_\vep,k_\vep}$
\begin{align} \label{eq:42}
H_{\rho_\vep, k_\vep} \psi := {k_\vep}\nabla \cdot \frac{1}{\rho_\vep} \nabla \psi
\end{align}
with the domain 
\begin{align} \label{eq:43}
D(H_{\rho_\vep,k_\vep}):= \left\{u\in H^1(\R^3): k_\vep \nabla \cdot \frac{1}{\rho_\vep} \nabla u \in L^2(\R^3)\right\},
\end{align}
where $\rho_\vep$ and $k_\vep$ are given by \eqref{eq:41}. Here, the derivatives in \eqref{eq:42} and \eqref{eq:43} are to be understood in the distributional case. The Hamiltonian $H_{\rho_\vep, k_\vep}$ is a self adjoint operator on $D(H_{\rho_\vep, k_\vep})$ with respect to the scalar product 
\begin{align*}
\langle \phi, \psi \rangle:= \int_{\R^3} \left(k_\vep(x)\right)^{-1} \phi(x) \overline {\psi(x)} dx, \quad \textrm{for} \; \phi, \psi \in D(H_{\rho_\vep,k_\vep}).
\end{align*}
It is known that given fixed $\vep>0$, the resolvent of $H_{\rho_\vep,k_\vep}$
\begin{align*}
R^H_{\rho_\vep, k_\vep}(z) := (-H_{\rho_\vep, k_\vep}-z^2)^{-1}
\end{align*}
is a linear bounded operator mapping from $L^2(\R^3)$ to $H^{1}(\R^3)$ for $z \in \mathbb C_+:= \{z\in\CC: \textrm{Im}(z)>0\}$. 
For the case when $z\in \R \backslash \{0\}$, the corresponding resolvent is defined by
\begin{align*}
R^{H}_{\rho_\vep,k_\vep}(z):= \lim_{\delta \rightarrow 0} (-H_{\rho_\vep, k_\vep}-(z+i\delta)^2)^{-1}.
\end{align*}
The above limit exists, according to the limiting absorption principle (see \cite{KK12,W91} for instance), which can be understood in the following sense
\begin{align*}
\lim_{\delta \rightarrow 0} (-H_{\rho_\vep, k_\vep}-(z+i\delta)^2)^{-1} : L^2_{\alpha}(\R^3) \rightarrow L^2_{-\alpha}(\R^3), \quad \textrm{for}\; z \in \R \backslash \{0\}, \quad  \alpha > \frac{1}2,
\end{align*} 
where the weighted space $L_{\alpha}^2(\R^3)$ is defined by
\begin{align*}
L_\alpha^2(\R^3) := \left\{u\in L^2_{\textrm{loc}}\left(\R^3\right): (1+|x|^2)^{\frac\alpha2} u(x) \in L^2\left(\R^3\right)\right\} \quad \textrm{for}\; \alpha \in \R.
\end{align*}
It is essential to highlight that the Hamiltonian $H_{\rho_\vep, k_\vep}$ and the scattering problem \eqref{eq:1}--\eqref{eq:2} are intimately related. Indeed, for each fixed $\vep >0$ and $\omega > 0$, the kernel of the corresponding resolvent $R^H_{k_\vep, \rho_\vep}$ is nothing but the Green's function corresponding to the scattering problem \eqref{eq:1}--\eqref{eq:2}.

On the other hand, it is worth mentioning that the Hamiltonian $H_{\rho_\vep, k_\vep}: \mathcal H \rightarrow \mathcal H$ with a domain $\mathcal D \subset \mathcal H$ is a black box Hamiltonian for each fixed $\vep>0$ (see Lemma 2.3 and Remark 2.4 in \cite{LSW} for more details). Here,  $\mathcal H$ and $\mathcal D$ are defined by
\begin{align*}
&\mathcal H := \left\{u \in L^2(\R^3): \int_{\R^3} (k_\vep (x))^{-1} |u(x)|^2 dx < +\infty\right\} \quad \mathrm{and}\\
& \mathcal D:= \bigg\{u \in L^2(\R^3): u \in H^1(\R^3\backslash \overline{\Omega_\vep}),\; \nabla\cdot\rho_0^{-1}\nabla u \in L^2(\R^3\backslash \overline{\Omega_\vep}),\\
& \qquad \qquad \qquad \qquad \quad u \in H^1({\Omega_\vep}),\; \nabla\cdot\rho_1^{-1}\vep^{-2}\nabla u \in L^2({\Omega_\vep}),\\
&\qquad \qquad \qquad \qquad \quad u_+ = u_-,\; {\rho_0}^{-1}\partial^+_\nu u = {\rho^{-1}_1\vep^{-2}}\partial^-_\nu u\bigg\}, 
\end{align*}
respectively. We note that $\mathcal D = D\left({H_{\rho_\vep, k_\vep}}\right)$, with $D\left({H_{\rho_\vep, k_\vep}}\right)$ defined in \eqref{eq:43}. It is well established that $R_{\rho_\vep, k_\vep}^{H}(z)$ is a meromorphic family of operators mapping from $\mathcal H_{\textrm{comp}}$ to $\mathcal D_{\textrm{loc}}$ for $z\in \CC$ (see Theorem 4.4 in \cite{DM}), where 
\begin{align} 
&\mathcal H_{\textrm{comp}}: = \{\phi\in \mathcal H: \phi|_{\R^3\backslash B_{R_0}} \in  L^2_{\textrm{comp}}(\R^3 \backslash B_{R_0}) \}, \label{eq:130}\\
&\mathcal D_{\textrm{loc}}: = \{\phi \in \mathcal H: \phi |_{\R^3\backslash B_{R_0}} \in  L^2_{\textrm{loc}}(\R^3 \backslash B_{R_0})\; \textrm{and}\; \chi \phi \in \mathcal D \; \textrm{if}\; \chi \in C_c^{\infty}(\R^3)\; \textrm{and}\; \chi|_{B_{R_0}} =1 \} . \label{eq:104}
\end{align}
Here, $L^2_{\textrm{comp}}(\R^3):=\{u\in L^2(\R^3): \exists R>0, |u(x)| = 0\; \textrm{for}\; |x|>R\}$, $B_{R_0}:=\{x\in \R^3: |x|< R_0\}$ with $R_0$ chosen to be large enough such that $\overline{\Omega_\vep} \subset B_{R_0}$.
This leads to the following definition.
\begin{definition} \label{d2}
We call $z$ a scattering resonance of the Hamiltonian $H_{\rho_\vep, k_\vep}$ if it is a pole of the meromorphic extension of $R^H_{\rho_\vep,k_\vep}(z)$.
\end{definition} 
For more details on the black box Hamiltonian, we refer to \cite[section 4]{DM}. With the $e^{-izt}$ convention, scattering resonances - the poles of the meromorphically continued resolvent in the lower-half complex $z$- plane - govern the decaying oscillatory components of the evolution generated by the associated Hamiltonian: the real part of a scattering resonance determines the oscillation
frequency, while the imaginary part is related to the decay order/ lifetime of the damped oscillation, see \cite{DM} for a comprehensive account. In our setting, all scattering resonances of the Hamiltonian $H_{\rho_\vep,k_\vep}$ lie in the open lower half-plane. It should be remarked that alternative characterizations of resonance based on the field behavior have been widely used in cloaking scenarios \cite{Ngu12, NT19}, and in negative-index media \cite{NN15}. For a real-axis resonance, its life time is formally infinite, hence lifetime is not an informative descriptor, and a field-behavior characterization is more appropriate as adopted in \cite{NN15}.

In the present work, we provide an alternative definition of the scattering resonance (see Definition \ref{d1} in section \ref{app}), which we {have} shown to be equivalent to Definition \ref{d2}, and further establish the relationship between the Minnaert frequency $\omega_M$ and the scattering resonances. {Specifically, we demonstrate that the resolvent of the Hamiltonian $H_{\rho_\vep,k_\vep}$ exhibits two scattering resonances (known as Minnaert resonances) in the lower half complex plane which converge to $\pm \omega_M$, respectively, as the size of the bubble tends to zero} (see {Statement \eqref{z2} of Lemma \ref{le:a1}} and Remark \ref{re:1}). 

Since the Hamiltonian $H_{\rho_\vep, k_\vep}$ depends on the parameter $\vep$, we are interested in the asymptotic behavior of its resolvent $R^H_{\rho_\vep,k_\vep}(z)$ as $\vep\rightarrow 0$. To do so, we proceed to introduce another Hamiltonian $H_{\rho_0,k_0}:= k_0\nabla \rho^{-1}_0\nabla$ with the domain $D(H_{\rho_0,k_0}) := H^2(\R^3)$. Here, $\rho_0$ and $k_0$ are mass density and bulk modulus in the homogeneous background medium, respectively. It is well known that for $z\in \overline{\CC_+} \backslash \{0\}$, $R^H_{\rho_0,k_0}(z):= \left(-H_{\rho_0,k_0} - z^2\right)^{-1}$ acts as a linear bounded mapping from $L^2_{\alpha}(\R^3)$ to $L^2_{-\alpha}(\R^3)$ with $\alpha > 1/2$ (see, e.g., \cite{RT,KK12}), and satisfies 
\begin{align*}
R^H_{\rho_0, k_0}(z) = -c_0^{-2} R_{z/c_0}.
\end{align*}
Here, the operator $R_z$ has the integral representation
\begin{align} \label{eq:69}
\left(R_z\phi\right)(x):= \int_{\R^3} \frac{e^{iz|x-y|}}{4\pi|x-y|} \phi(y) dy, \quad x\in \R^3,\quad z\in \CC.
\end{align} 
In addition, note that due to the relation of $R^H_{\rho_0, k_0}(z)$ and $R_{z/c_0}$, $R_{\rho_0,k_0}^H(z)$ admits an analytic continuation from $\CC_+$ into $\CC$ as a mapping from $L^2_{\textrm{comp}}(\R^3)$ to $L^2_{\textrm{loc}}(\R^3)$.

In the following theorem, we shall present the uniform valid asymptotics of the resolvent of the operator $R^H_{\rho_\vep,k_\vep}(z)$ with respect to $\vep \in \R_+$ and $z$ in any bounded closed subset of $\overline{\CC_+}\backslash \{0\}$, which are closely related to $R^H_{\rho_0, k_0}(z)$.

\begin{theorem}\label{th:2}
Let $\vep>0$, $\alpha>1/2$ and $\omega_M$ be given by \eqref{eq:45}. Suppose that $V$ is a bounded closed subset of $\overline{\CC_+}\backslash \{0\}$. The following expansions hold true.
\begin{enumerate}[(1)]
\item \label{1} Let $a\in \R_+$. Suppose that $\chi_{a,\vep}(x): = 1$ for $x\in \R^3 \backslash \Omega_\vep$ and $\chi_{a,\vep}(x):= a\vep^2$ for $x\in \Omega_\vep$. For any $h\in L_{\alpha}^2(\R^3)$, we have
\begin{align*}
&\left(R^H_{\rho_\vep,k_\vep}(z)\chi_{a,\vep} h\right)(x) = \left(R^H_{\rho_0,k_0}(z) h\right)(x) \notag\\
& \quad \quad \qquad + \frac{\varepsilon z^2\mathcal C_\Omega}{\omega^2_M-z^2-i\varepsilon\frac{z^3 \mathcal C_\Omega}{4\pi c_0}}\left(R^H_{\rho_0,k_0}(z)h\right)(y_0)\frac{e^{iz|x-y_0|/c_0}}{4\pi|x-y_0|} + \left(R^H_{res}(z)h\right)(x)
\end{align*}
with 
\begin{align*}
\left\| R^H_{res}(z)h\right\|_{L^2_{-\alpha}(\mathbb R^3)} \le C_{d_{V,\max}, d_{V,\min}}\frac{\varepsilon^{3/2}}{\left|\omega^2_M-z^2-i\varepsilon\frac{z^3 \mathcal C_\Omega}{4\pi c_0}\right|}\|h\|_{L^2_{\alpha}(\R^3)},  \quad \vep \rightarrow 0,
\end{align*}
holding uniformly with respect to all $z \in V$. 

\item \label{2}
For any $h\in L_{\alpha}^2(\R^3) \cap H^2_{\textrm{loc}}(\R^3)$, we have
\begin{align*}
&\left(R^H_{\rho_\vep,k_\vep}(z) h\right)(x) = \left(R^H_{\rho_0,k_0}(z) h\right)(x) +\frac{\varepsilon z^2\mathcal C_\Omega}{\omega^2_M-z^2-i\varepsilon\frac{z^3 \mathcal C_\Omega}{4\pi c_0}}\left(R^H_{\rho_0,k_0}(z)h\right)(y_0)\frac{e^{iz|x-y_0|/c_0}}{4\pi|x-y_0|}\notag\\ 
&\qquad \qquad \qquad +\frac{\varepsilon \mathcal C_\Omega}{\omega^2_M-z^2-i\varepsilon\frac{z^3 \mathcal C_\Omega}{4\pi c_0}}h(y_0)\frac{e^{iz|x-y_0|/c_0}}{4\pi|x-y_0|}  + \left(R^H_{res}(z)h\right)(x)
\end{align*}
with 
\begin{align*}
    \left\| R^H_{res}(z)h\right\|_{L^2_{-\alpha}(\mathbb R^3)} \le \frac{C_{d_{V,\max}, d_{V,\min}}\varepsilon^{3/2}}{\left|\omega^2_M-z^2-i\varepsilon\frac{z^3 \mathcal C_\Omega}{4\pi c_0}\right|}\left(\|h\|_{L^2_{\alpha}(\R^3)}+ \|h\|_{H^2(B_1(y_0))}\right), \quad \vep \rightarrow 0,
\end{align*}
holding uniformly with respect to all $z \in V$, where $B_1(y_0):= \{x\in\R^3: |x-y_0|<1\}$.
\end{enumerate}
Here, $d_{V,\max}:= \max_{z\in V} |z|$, $d_{V,\min}:= \min_{z\in V} |z|$ and $C_{d_{V,\max}, d_{V,\min}}$ is a positive constant independent of $\vep$, $z$ and $h$.
\end{theorem}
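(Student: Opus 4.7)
The plan is to reduce Theorem~\ref{th:2} to (a complex-frequency version of) Theorem~\ref{th:1}. Fix $h$ and $z\in V$, set $u_0 := R^H_{\rho_0,k_0}(z)h = -c_0^{-2}R_{z/c_0}h$, and let $v_\vep := R^H_{\rho_\vep,k_\vep}(z)f_\vep$, where $f_\vep = \chi_{a,\vep}h$ in case~(\ref{1}) and $f_\vep = h$ in case~(\ref{2}). The function $u_0$ is continuous in a neighborhood of $y_0$ by elliptic regularity and the three-dimensional Sobolev embedding $H^2_{\textrm{loc}} \hookrightarrow C^{0,1/2}$, so $u_0(y_0)$ is well defined. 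Writing $v_\vep = u_0 + w_\vep$, outside $\Omega_\vep$ the scattered part $w_\vep$ satisfies the homogeneous Helmholtz equation with outgoing radiation, while inside $\Omega_\vep$ a direct computation using $(-c_0^2\Delta - z^2)u_0 = h$ gives
\begin{align*}
(-c_1^2\Delta - z^2) w_\vep = f_\vep|_{\Omega_\vep} - \tfrac{c_1^2}{c_0^2} h - \Bigl(\tfrac{c_1^2}{c_0^2} - 1\Bigr) z^2 u_0,
\end{align*}
together with continuity of $w_\vep$ across $\Gamma_\vep$ and the jump $(\rho_1\vep^2)^{-1}\partial_\nu w_\vep^- - \rho_0^{-1}\partial_\nu w_\vep^+ = \bigl(\rho_0^{-1} - (\rho_1\vep^2)^{-1}\bigr)\partial_\nu u_0$ on $\Gamma_\vep$. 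Thus $w_\vep$ is the scattered field of a transmission problem whose effective incident wave is $u_0$, augmented by explicit interior volume/boundary data that are $\vep$-small in case~(\ref{1}) and of mixed order in case~(\ref{2}).

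Next I would write the Lippmann--Schwinger system for $w_\vep$ in the same spirit as \eqref{eq:44}--\eqref{eq:49}, and invert the boundary operator $\tfrac{1}{2}(1+\rho_1\vep^2/\rho_0) + (1-\rho_1\vep^2/\rho_0)K^*_{\Gamma_\vep, z/c_0}$ via the spectral representation \eqref{eq:72} (Lemma~\ref{le:2}) based on the $-1/2$ eigenspace of $K^*_0$, exactly as in the proof of Theorem~\ref{th:1}. This inversion produces the Minnaert quasi-pole $\omega_M^2 - z^2 - i\vep z^3\mathcal C_\Omega/(4\pi c_0)$ and an associated leading term proportional to the trace at $y_0$ of the effective incident wave. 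In case~(\ref{1}), since $f_\vep|_{\Omega_\vep} = a\vep^2 h$, the extra interior source on the right-hand side above is $O(\vep^2)$ in $L^2(\Omega_\vep)$ and, after being transported through the Newtonian potential $N_{\Omega_\vep, z/c_0}$ over a volume of order $\vep^3$, contributes only to the $O(\vep^{3/2})$ residual; the leading term is $\vep z^2\mathcal C_\Omega\, u_0(y_0)\, e^{iz|x-y_0|/c_0}/(4\pi|x-y_0|)$ divided by the quasi-pole, which upon reinstating $v_\vep = u_0 + w_\vep$ is precisely the statement of~(\ref{1}).

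In case~(\ref{2}), the interior source $f_\vep|_{\Omega_\vep} = h$ is $O(1)$. Using the Sobolev embedding $H^2(B_1(y_0))\hookrightarrow C^0$ and Taylor expansion, $h(y) = h(y_0) + O(\vep\|h\|_{H^2(B_1(y_0))})$ uniformly for $y\in\Omega_\vep$. The constant part $h(y_0)$ acts as a resonant interior forcing which, after being fed through the Lippmann--Schwinger system, couples to the same $-1/2$ eigenspace of $K^*_0$ and generates an additional dominant term $-\vep\, h(y_0)\, e^{iz|x-y_0|/c_0}/(4\pi|x-y_0|)$ divided by the Minnaert quasi-pole. The prefactor $\vep$ arises from the combined scaling $|\Omega_\vep|\cdot(\rho_1\vep^2)^{-1} = \vep|\Omega|\rho_1^{-1}$ at which a constant interior source enters the transmission jump in \eqref{eq:49}, in contrast with the prefactor $\vep z^2\mathcal C_\Omega$ in~(\ref{1}) which originates from the volume-to-surface coupling through the Newtonian operator. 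The Taylor remainder and the non-constant part of $h|_{\Omega_\vep}$ contribute only to the residual, controlled by $\|h\|_{H^2(B_1(y_0))}$.

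Finally, the extension from real $\omega\in I$ to complex $z\in V\subset\overline{\CC_+}\setminus\{0\}$ is by holomorphic continuation: the operators $R_{z/c_0}$, $N_{\Omega_\vep, z/c_0}$, $K^*_{\Gamma_\vep, z/c_0}$, and the spectral inversion formula \eqref{eq:72} all depend holomorphically on $z\in\CC\setminus\{0\}$, so every identity derived for real $\omega$ extends to $V$ by uniqueness of analytic continuation, with constants depending only on $d_{V,\max}$ and $d_{V,\min}$. The main obstacle I foresee is the bookkeeping in case~(\ref{2}): one must verify that the constant interior source $h(y_0)$, combined with the high-contrast transmission factor $(\rho_1\vep^2)^{-1}$ and the spectral projection onto $S_0^{-1}1$ that underlies the Minnaert resonance, produces precisely the coefficient $-\vep$ in the additional dominant term (rather than a different power of $\vep$ or an extra $\mathcal C_\Omega$-dependent factor). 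Getting this correct reduces to carefully tracking the interplay between the interior volume $|\Omega_\vep|=\vep^3|\Omega|$ and the $\vep^{-2}$ factor in \eqref{eq:49} after the capacitance projection has been factored out.
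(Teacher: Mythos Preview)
Your outline is essentially the same strategy as the paper's: write the Lippmann--Schwinger system for the perturbed resolvent with $R^H_{\rho_0,k_0}(z)h$ playing the role of an effective incident wave, rescale to the fixed domain $\Omega$, and invert the boundary operator via the projection onto the $-1/2$ eigenspace of $K^*_0$ (Lemma~\ref{le:2}). The paper chooses $v_z^{h_{a,\vep}}=R^H_{\rho_0,k_0}(z)h_{a,\vep}$ as reference and only at the very end replaces $h_{a,\vep}$ by $h$ via Lemma~\ref{le:13}, whereas you take $u_0=R^H_{\rho_0,k_0}(z)h$ from the start and absorb the difference into an interior source. Both are legitimate and lead to the same bookkeeping; your variant is closer in spirit to Theorem~\ref{th:1}.

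There is, however, a genuine weak point in your last step. You propose to derive the estimates for real $\omega\in I$ and then write that ``every identity derived for real $\omega$ extends to $V$ by uniqueness of analytic continuation, with constants depending only on $d_{V,\max}$ and $d_{V,\min}$.'' Analytic continuation extends \emph{identities}, not \emph{uniform inequalities}: a bound $\|T(\omega)\|\le C$ on a real interval does not imply the same bound on a complex neighborhood. The paper avoids this by working with complex $z\in\overline{\CC_+}\setminus\{0\}$ from the outset. All the auxiliary estimates (Lemmas~\ref{le:5}, \ref{le:8}, \ref{le:7}, \ref{le:1}, \ref{le:12}, \ref{le:2}) are already stated and proved for $z$ in bounded subsets of $\CC$ or of $\overline{\CC_+}\setminus\{0\}$, so the proof of Theorem~\ref{th:1} carries over to complex $z$ verbatim once you plug these in. In particular Lemma~\ref{le:1} is precisely what guarantees that the Minnaert denominator $\omega_M^2-z^2-i\vep z^3\mathcal C_\Omega/(4\pi c_0)$ is bounded below uniformly on $V$; this is a direct estimate, not an analytic continuation argument. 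So replace the continuation step by ``rerun the operator estimates with $z\in V$'' and your plan is complete.

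On the coefficient you flag as the ``main obstacle'' in case~(\ref{2}): the paper isolates it by computing $\langle\partial_\nu\widetilde v_{z,\vep}^h,S_0^{-1}1\rangle_{S_0}$ via \eqref{eq:51} and Lemma~\ref{le:11}\,\eqref{e1}, which produces the extra contribution $-\vep^2 c_0^{-2}\mathcal C_\Omega^{-1}|\Omega|\,h(y_0)$ (see \eqref{eq:138}); after division by the quasi-pole and using $\omega_M^2=\mathcal C_\Omega k_1/(|\Omega|\rho_0)$ this yields exactly the $-\vep\,h(y_0)$ term. Your heuristic about $|\Omega_\vep|\cdot(\rho_1\vep^2)^{-1}$ points in the right direction but does not by itself pin down the absence of an extra $\mathcal C_\Omega$; the clean route is the trace identity \eqref{eq:51} combined with Lemma~\ref{le:11}.
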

Define the following operator 
\begin{align*}
&\left(\Delta_{y_0}-z^2\right)^{-1} : L^2_{\alpha}(\R^3) \rightarrow L_{-\alpha}^2(\R^3)\; \textrm{for}\; \alpha > \frac 12,\\
&\left((\Delta_{y_0}-z^2)^{-1} \psi\right) (x) := \frac{-1}{c^2_0}\int_{\R^3} \frac{e^{iz|x-y|/c_0}}{4\pi|x-y|} \psi(y)dy - \frac{i}{c_0z}\frac{e^{iz|x-y_0|/c_0}}{|x-y_0|}\int_{\R^3} \frac{e^{iz|y_0-y|/c_0}}{4\pi|y_0-y|} \psi(y)dy.\end{align*}
This operator belongs to the class of the point perturbations of the free Laplacian. We refer to \cite{AGHH05, MPS} for more details on the point perturbations of the Laplacian. Given $\alpha \in \R$, define the space
\begin{align*}
L^2_{\alpha,y_0}(\R^3) := \{h\in L_{\alpha}^2(\R^3): \exists\; r>0, h(x)=0\; 
\textrm{for}\; |x-y_0|<r\}.
\end{align*}

As a by-product of Theorem \ref{th:2}, the resolvent $R^H_{\rho_\vep, k_\vep}(z)$ has a non-trivial limit if and only if $z$ is equal to the Minnaert frequency $\omega_M$.

\begin{corollary} \label{th:3}
Let $\vep>0$ and $\omega_M$ be given by \eqref{eq:45}. Assume that $z\in \overline{\CC_+} \backslash \{0\}$ and $\alpha>1/2$. For every $h\in L^2_{\alpha,y_0}(\R^3)$, we have
\begin{align*}
\lim_{\vep\rightarrow +0}R^H_{\rho_\vep,k_\vep}(z) h = R^H_{\rho_0,k_0}(z) h \quad \textrm{in}\; L^2_{-\alpha}(\R^3), \quad z \ne \pm \omega_M
\end{align*}
and
\begin{align*}
\lim_{\vep\rightarrow +0} R^H_{\rho_\vep,k_\vep}(z) h = (\Delta_{y_0}-z^2)^{-1}h \quad \textrm{in}\; L^2_{-\alpha}(\R^3), \quad z = \pm \omega_M.
\end{align*}
\end{corollary}

Corollary \ref{th:3} states that the non-trivial limit of the resolvent $R^H_{\rho_\vep, k_\vep}(\pm\omega_M) h$, with $h$ supported away from $y_0$, belongs to a class of point perturbations of the free Laplacian. Interestingly, for the more regular $h$ that is not zero at the point $y_0$, statement \eqref{2} of Theorem \ref{th:2} implies that a different asymptotic behavior of the resolvent $R^H_{\rho_\vep, k_\vep}(\pm\omega_M)h$ occurs as $\vep$ tends to $0$.

\subsubsection{The resolvent of the original  acoustic propagator}
Given $\vep>0$, $z\in \overline{\CC_+}\backslash \{0\}$ and $h\in L_{\alpha}^2(\R^3)$ with $\alpha > 1/2$, consider the resolvent $R_{\rho_\vep, k_\vep}(z) h:=u_{z,\vep}^h$ of the acoustic propagator corresponding to the original scattering problem \eqref{eq:1}--\eqref{eq:2}, where $u_{z,\vep}^h \in \left(L^2_{-\alpha}(\R^3)\cap H^2_{\textrm{loc}}(\R^3 \backslash \Gamma_\vep)\right)$ satisfies
\begin{align*}
&\nabla \cdot \frac{1}{\rho_\vep} \nabla u_{z,\vep}^h +  \frac{1}{k_\vep} z^2 u_{z,\vep}^h = -h \quad \; \textrm{in}\; \R^3.
\end{align*}
Here, the mass density $\rho_\vep$ and the bulk modulus $k_\vep$ are specified in \eqref{eq:41}. The resolvent $R_{\rho_\vep, k_\vep}(z) $ is intimately linked to the resolvent $R_{\rho_\vep, k_\vep}^H(z)$ of the Hamiltonian $H_{\rho_\vep, k_\vep}$. In fact, they are related by 
the equation 
\begin{align}\label{eq:29}
R_{\rho_\vep, k_\vep}(z) h = R^H_{\rho_\vep, k_\vep}(z)\left(k_\vep h\right)\quad \textrm{for each}\; z\in \overline{\CC_+} \backslash \{0\},
\end{align}
implying the equivalence of the mapping properties of $R_{\rho_\vep, k_\vep}(z) $ and $R^H_{\rho_\vep, k_\vep}(z)$ for each fixed $\vep >0$. Consequently, $R_{\rho_\vep,k_\vep}(z)$ can be extended to a meromorphic family of operators mapping from $\mathcal H_{\textrm{comp}}$ to $ \mathcal D_{\textrm{loc}}$ for $z\in \CC$, and it shares the same scattering resonances with $R^H_{\rho_\vep, k_\vep}(z)$. Here, the spaces $\mathcal H_{\textrm{comp}}$ and $ \mathcal D_{\textrm{loc}}$ are defined in \eqref{eq:130} and \eqref{eq:104}, respectively. Moreover, building upon formula \eqref{eq:29}, the uniform asymptotics of the resolvent $R^H_{\rho_\vep, k_\vep}(z)$ directly yield the asymptotic behavior of the resolvent $R_{\rho_\vep, k_\vep}(z)$ as $\vep$ tends to 0, leading to the following two corollaries.

\begin{theorem} \label{th:6}
Let $\vep>0$ and $\omega_M$ be given by \eqref{eq:45}. Suppose that $V$ is a bounded closed subset of $\overline{ \CC_+}\backslash \{0\}$. For any $h\in L_{\alpha}^2(\R^3)$ with $\alpha > 1/2 $, we have
\begin{align*}
&\left(R_{\rho_\vep,k_\vep}(z) h\right)(x) = {k_0}\bigg[\left(R^H_{\rho_0,k_0}(z) h\right)(x) \notag\\
& \quad \quad \qquad + \frac{\varepsilon z^2\mathcal C_\Omega}{\omega^2_M-z^2-i\varepsilon\frac{z^3 \mathcal C_\Omega}{4\pi c_0}}\left(R^H_{\rho_0,k_0}(z)h\right)(y_0)\frac{e^{iz|x-y_0|/c_0}}{4\pi|x-y_0|}\bigg] + \left(R_{res}(z)h\right)(x)
\end{align*}
with 
\begin{align*}
\| R_{res}(z)h\|_{L^2_{-\alpha}(\mathbb R^3)} \le C_{d_{V,\max}, d_{V,\min}}\frac{\varepsilon^{3/2}}{\left|\omega^2_M-z^2-i\varepsilon\frac{z^3 \mathcal C_\Omega}{4\pi c_0}\right|}\|h\|_{L^2_{\alpha}(\R^3)},  \quad \vep \rightarrow 0,
\end{align*}
holding uniformly with respect to all $z \in V$. Here, $d_{V,\max}:= \max_{z\in V} |z|$, $ d_{V,\min}:= \min_{z\in V} |z|$ and $C_{d_{V,\max}, d_{V,\min}}$ is a positive constant independent of $\vep$, $z$ and $h$.
\end{theorem}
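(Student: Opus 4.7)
The plan is to reduce Theorem \ref{th:6} to Theorem \ref{th:2}\eqref{1} by means of the identity \eqref{eq:29} that links the two resolvents. The key observation is that the bulk-modulus coefficient $k_\vep$ defined in \eqref{eq:41} factors as $k_\vep = k_0\,\chi_{a,\vep}$, where $a:=k_1/k_0>0$ and $\chi_{a,\vep}$ is precisely the cutoff-type function introduced in statement \eqref{1} of Theorem \ref{th:2}: it equals $1$ outside $\Omega_\vep$ and $a\vep^2$ inside $\Omega_\vep$.

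First I would invoke \eqref{eq:29} to rewrite
\[
R_{\rho_\vep,k_\vep}(z)h \;=\; R^H_{\rho_\vep,k_\vep}(z)(k_\vep h) \;=\; k_0\,R^H_{\rho_\vep,k_\vep}(z)(\chi_{a,\vep}\,h).
\]
Since $\chi_{a,\vep}$ is a bounded multiplier with $\|\chi_{a,\vep}\|_{L^\infty(\R^3)} \le \max(1,a)$ for all sufficiently small $\vep>0$, the product $\chi_{a,\vep} h$ lies in $L^2_\alpha(\R^3)$ whenever $h$ does, and its $L^2_\alpha$ norm is controlled by that of $h$, uniformly in $\vep$. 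This justifies applying Theorem \ref{th:2}\eqref{1} to the datum $\chi_{a,\vep}h$. Crucially, the expansion provided there is already phrased in terms of $R^H_{\rho_0,k_0}(z)h$ and the point value $(R^H_{\rho_0,k_0}(z)h)(y_0)$ rather than of $R^H_{\rho_0,k_0}(z)(\chi_{a,\vep}h)$, so it can be inserted directly into the identity above.

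Second, substituting this expansion yields
\[
(R_{\rho_\vep,k_\vep}(z)h)(x) \;=\; k_0\!\left[(R^H_{\rho_0,k_0}(z)h)(x) + \frac{\vep z^2\mathcal C_\Omega}{\omega_M^2-z^2-i\vep\frac{z^3\mathcal C_\Omega}{4\pi c_0}}(R^H_{\rho_0,k_0}(z)h)(y_0)\frac{e^{iz|x-y_0|/c_0}}{4\pi|x-y_0|}\right] + k_0\,(R^H_{res}(z)h)(x),
\]
which matches the asserted decomposition upon setting $R_{res}(z)h := k_0\,R^H_{res}(z)h$. The claimed residual bound in $L^2_{-\alpha}(\R^3)$ is then immediate from the corresponding residual bound in Theorem \ref{th:2}\eqref{1}, the constant $k_0$ being absorbed into $C_{d_{V,\max},d_{V,\min}}$.

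There is no genuine obstacle in this argument: all the substantive analytic work, in particular the uniformity in $z\in V$ and in $\vep \to 0$, has already been carried out in Theorem \ref{th:2}\eqref{1}. The only minor verification needed is the $L^2_\alpha$-boundedness of the multiplier $h\mapsto\chi_{a,\vep}h$, which follows at once from its uniform $L^\infty$ bound. Thus Theorem \ref{th:6} is obtained as a clean corollary of Theorem \ref{th:2}\eqref{1} combined with the resolvent relation \eqref{eq:29}.
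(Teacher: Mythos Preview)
Your proposal is correct and matches the paper's approach exactly: the paper states just before Theorem~\ref{th:6} that it follows from the uniform asymptotics of $R^H_{\rho_\vep,k_\vep}(z)$ via formula~\eqref{eq:29}, and you have carried out precisely this reduction, correctly observing that $k_\vep = k_0\chi_{k_1/k_0,\vep}$ and that Theorem~\ref{th:2}\eqref{1} already expresses the expansion in terms of $R^H_{\rho_0,k_0}(z)h$ rather than $R^H_{\rho_0,k_0}(z)(\chi_{a,\vep}h)$.
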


\begin{corollary} \label{th:7}
Let $\vep>0$ and $\omega_M$ be given by \eqref{eq:45}. Assume that $z\in \overline{\CC_+} \backslash \{0\}$ and $\alpha > 1/2$. We have
\begin{align*}
\left\|R_{\rho_\vep, k_\vep}(z) - {k_0}R^H_{\rho_0,k_0}(z)\right\|_{L_{\alpha}^2(\R^3), L^2_{-\alpha}(\R^3)} \le C_{|z|} \vep, \quad z \ne \pm \omega_M 
\end{align*}
and
\begin{align*}
&\left\|R_{\rho_\vep, k_\vep}(z) - {k_0} (\Delta_{y_0}-z^2)^{-1}\right\|_{L_{\alpha}^2(\R^3), L^2_{-\alpha}(\R^3)} \le C_{|z|}\vep^{1/2}, \quad z = \pm \omega_M.
\end{align*}
Here, $C_{|z|}$ is a positive constant independent of $\vep$.
\end{corollary}

In comparison with Theorem \ref{th:2} regarding the asymptotic behaviors of $R^H_{\rho_\vep,k_\vep}(z)$, Theorem \ref{th:6} provides a unified asymptotic formula of $R_{\rho_\vep, k_\vep}(z) h$ for all $h \in L_{\alpha}^2(\R^3)$ as $\vep$ tends to $0$. Such asymptotic formula leads to the strong convergence of the resolvent of the original acoustic propagator, as articulated in Corollary \ref{th:7}. This specific difference is directly attributable to equation \eqref{eq:29}.

The uniform resolvent estimates provided in Theorems \ref{th:2} and \ref{th:6} are not universally applicable across all $z \in \CC$ due to the existence of scattering resonances of the Hamiltonian $H_{\rho_\vep,k_\vep}$ in the lower half of the complex plane $\CC_-$. Indeed, in section \ref{app}, we show that the resolvent $R^H_{\rho_\vep,k_\vep}(z)$, with sufficiently small $\vep>0$, exhibits two scattering resonances $z_{\pm}(\vep)$, both situated in the lower half complex plane, converging respectively to $\pm\omega_M$ at the order of $\vep$, as $\vep$ goes to zero (see also Remark \ref{re:1}).

By definition of the resolvent of the original acoustic propagator, Theorem \ref{th:6} also covers a quantitative, uniform-in-space asymptotic expansion of the field radiated by the source in a weighted $L^2$ space. Our analysis employs the boundary- and volume- integral operators. See also \cite{Ngu12, NT19} for PDE-estimate-based treatments of small-inclusion effects under various source excitations; these work allow nonconstant coefficients inside the inclusions and are linked to approximate cloaking.

\subsection{Comparison with related works} \label{section:1.4}
Let us now summarize and highlight the main contributions in comparison to the previous works. 

\begin{enumerate}
\item  First, we derive the asymptotic expansion of the scattered field uniform in space and frequency.  In \cite{MPS}, a global-in-space asymptotic expansion was derived under the condition that the incident frequencies are outside a vicinity of the Minnaert frequency $\omega_M$. Here, we remove this condition.

\item Second, we establish the relationship between the Minnaert frequency $\omega_M$ and the scattering resonance of the natural Hamiltonian $H_{\rho_\vep,k_\vep}$.
It is worth mentioning that the usual characterization of the Minnaert frequency, also known as the Minnaert resonance, was formulated as the frequency where the related system of boundary integral equations fails to be injective, see for instance \cite{AZ18, AZ17,FH}. To the best of our knowledge, it remained unclear which Hamiltonian exhibits the Minnaert resonance as the pole of its resolvent. In this paper, we demonstrate that the Minnaert resonance is actually the scattering resonance of the natural Hamiltonian $H_{\rho_\vep,k_\vep}$ and further construct two sequences of Minnaert resonances in the lower-half complex plane, which converge to $\pm \omega_M$, respectively, as the size of the bubble tends to zero.

\item Third, we derive the related resolvent estimates, which are uniform over the bounded closed subsets of $\overline{\CC_+}\backslash\{0\}$ and with respect to the size/contrast of the resonators.  Resolvent estimates were first derived in \cite{MPS} where a  different Hamiltonian was proposed which is a frequency-dependent Schr\"{o}dinger type operator $H_\omega(\vep)$, that includes a singular $\delta-$ like potential supported at the interface of the bubble (see (1.25)--(1.27) there for the detailed definition of $H_{\omega}(\vep)$). Compared to this, it should be remarked that the Hamiltonian we are considering in the current work is, instead, the natural wave-operator $H_{\rho_\vep, k_\vep}$, see (\ref{eq:42})-(\ref{eq:43}). In \cite{MPS}, the corresponding resolvent $\left(H_{\omega}(\vep) - z^2\right)^{-1}$, for $z\in \mathbb C_+ \backslash i\R_+$ and each fixed $\omega$, was shown to exhibit a non-trivial limit as $\vep$ tends to $0$, if and only if $\omega = \omega_M$, by using singular perturbation methods (see \cite[Theorem 1.1]{MPS}). In contrast, the approach we developed here, which is solely based on the Lippmann-Schwinger equation, is more straightforward. In addition, the natural Hamiltonian $H_{\rho_\vep,k_\vep}$ considered in this paper is intimately linked to the wave propagation in the presence of a subwavelength resonator given by a Minneart bubble in time domain. This connection is further validated in \cite{LS}, which shows that, under excitation by a causal source compactly supported in space and time, the solution exhibits a decaying resonant oscillation whose frequency is the Minnaert resonances' real part and whose lifetime its inversely proportional to the magnitude of its imaginary part; this term dominates on time scales up to that lifetime.
\end{enumerate}

\noindent The remaining part of this work is divided as follows. In section \ref{section-2}, we derive the needed asymptotic estimates of the auxiliary operators that appear in the proofs of the different theorems stated above. In section \ref{section-3} and section \ref{section-4}, we provide the detailed proofs of these theorems. In section \ref{app}, we show how the Minnaert frequency, given in (\ref{eq:45}), is the dominant part of scattering resonances of the Hamiltonian $H_{\rho_\vep,k_\vep}$ in the sense of Definition \ref{d2}. This justifies calling it the Minnaert resonance. 

\section{Asymptotic estimates of auxiliary operators}\label{section-2}
This section is devoted to analyzing asymptotic behaviors of certain operators that play a crucial role in the proofs of Theorems \ref{th:1} and \ref{th:2}. Before proceeding, we introduce some new notations. For two Banach spaces $X$ and $Y$, denote the space of all linear bounded mapping from $X$ to $Y$ by $\mathcal L(X,Y)$. For simplicity, $\mathcal L(X,X)$ is also denoted by $\mathcal L(X)$. Let $D \subset \R^3$ be any open bounded and connected domain with a smooth boundary $\partial D$. For $z\in \CC$, define operators
\begin{align*}
&SL_{\partial D, z}: H^{\frac12}(\partial D) \rightarrow H_{\textrm{loc}}^{2}(\R^3 \backslash \partial D),  \;\; \left(SL_{\partial D, z}\phi\right)(x) := \int_{\partial D} \frac{e^{iz|x-y|}}{4\pi|x-y|}\phi(y)d\sigma(y), \;\; x \in \R^3\backslash \partial D,\\
&S_{\partial D, z}: H^{-\frac 12}(\partial D)\rightarrow H^{\frac12}(\partial D), \quad \left(S_{\partial D, z}\phi\right)(x) := \int_{\partial D} \frac{e^{iz|x-y|}}{4\pi|x-y|}\phi(y)d \sigma(y),\quad x\in \partial D, \\
&N_{ D,z}: L^2( D) \rightarrow H_{\textrm{loc}}^2(\R^3), \quad \left(N_{D, z}\phi\right)(x) := \int_{ D}\frac{e^{iz|x-y|}}{4\pi|x-y|}\phi(y)dy, \quad x\in \R^3,\\
&K_{\partial D,z}^*: H^{-\frac12}(\partial D) \rightarrow H^{\frac 12}(\partial D), \quad \left(K_{\partial D, z}^{*}\phi\right)(x) := \partial_{\nu_x}\int_{\partial D} \frac{e^{iz|x-y|}}{4\pi|x-y|}\phi(y)d\sigma(y),\quad x\in \partial D, 
\end{align*}
\begin{align*}
&K_{\partial D,z}: H^{-\frac12}(\partial D)\rightarrow H^{\frac 12}(\partial D), \quad \left(K_{\partial D,z}\phi\right)(x) := \int_{\partial D} \partial_{\nu_y}\frac{e^{iz|x-y|}}{4\pi|x-y|}\phi(y)d\sigma(y),\quad x\in \partial D.
\end{align*}
$SL_{\partial D,z}$, $S_{\partial D, z}$, $N_{D,z}$ and $K_{\partial D,z}$ are also referred to as the single-layer potential, the single-layer boundary operator, the Newtonian operator and Neumann-Poincar\'{e} operator, respectively. It is known that when $z\in \overline{\CC_+}$, $SL_{\partial D, z} \in \mathcal L \left(H^{1/2}(\partial D), H_{-\alpha}^{2}(\R^3 \backslash \partial D)\right)$ and $N_{D, z} \in \mathcal L \left(L^2(D), H_{-\alpha}^{2}(\R^3)\right)$ for $\alpha>1/2$.  We refer to \cite{WM00} for further details regarding the integral operators mentioned above. For the sake of the notational simplicity, the operators $SL_{\Gamma,z}$, $S_{\Gamma,z}$, $N_{\Omega,z}$, $ K^*_{\Gamma,z}$ and $K_{\Gamma,z}$ will henceforth be denoted by $SL_{z}$, $S_{z}$, $N_{z}$, $ K^*_{z}$ and $K_{z}$, respectively.
Furthermore, we denote by $\gamma$ the operator that maps a function onto its Dirichlet trace. It is well established that the trace operator $\gamma$ satisfies, up to a positive bound $C_\Omega$,
\begin{align}\label{eq:63}
\|\gamma \phi\|_{H^{s-\frac 12}(\Gamma)} \le C_{\Omega}\|\phi\|_{H^s(\Omega)}, \quad s>\frac 12.
\end{align}
 Given that $z$ is not a Dirichlet eigenvalue of $-\Delta$ in $\Omega$, it is established that
\begin{align*}
\left(S_{z}\right)^{-1} \in \mathcal L \left(H^{\frac 12}(\Gamma), H^{-\frac 12}(\Gamma)\right).
\end{align*}
Note that for each $g\in H^2(\Omega)$ solving equation $\Delta g + k^2g = f$ with $f\in L^2(\Omega)$, the normal derivative of $g$ on $\Gamma$ can be represented by 
\begin{align}\label{eq:51}
\partial_\nu g= S_{z}^{-1}\left(\frac 12 \mathbb I+ K_{z}\right)\gamma g + S_{z}^{-1} \gamma N_{z} f \quad \textrm{on}\;\Gamma.
\end{align}
\eqref{eq:51} can be easily derived by using Green formulas and applying the jump relations of the single-layer and double-layer potential (see, e.g., [Theorem 3.1] in \cite{DK19}).
Let $B_R(y_0):=\{x\in \R^3: |x-y_0| < R\}$ denote a ball at $y_0 \in \R^3$ with a radius $R>0$. Define
$H^2_{\alpha}(\R^3):= \{u\in H^2_{\textrm{loc}}\left(\R^3\right): |\nabla^j u| \in L_{\alpha}^2\left(\R^3\right), j\in\{0,1,2\}\}$ for $\alpha \in \R$.
From now on, $\mathbb I $ denotes an identity operator in various spaces, and the constants may be different at different places. 

We first present the expansions of $S_z$, $K_{z}$, $K^{*}_{z}$, $N_z$ and $SL_z$ when $z$ belongs to a bounded subset of $\CC$.

\begin{lemma}\label{le:5}
Let $z$ belong to a bounded subset of $\CC$. The following arguments hold true.
\begin{enumerate}[(a)]
\item \label{b1}  The expansion $S_z$ = $S_0 + \sum_{j=1}^{\infty} z^j S^{(j)}$ is uniformly convergent in $\mathcal L(H^{-1/2}(\Gamma), H^{1/2}(\Gamma))$ with respect to $z$. Here, $S^{(j)}$ is defined by
\begin{align*}
\left(S^{(j)} \phi\right)(x):= \frac{i}{4\pi} \int_{\Gamma}\frac{(i|x-y|)^{(j-1)}}{j!} \phi(y)d\sigma(y), \quad x\in \Gamma.
\end{align*}
\item \label{b2} The expansion $K^*_z$ = $K^*_0 + \sum_{j=1}^{\infty}z^{j} K^{*,(j)}$ is uniformly convergent in $\mathcal L(H^{-1/2}(\Gamma), H^{1/2}(\Gamma))$ with respect to $z$. Here, $K^{*,(j)}$ is defined by
\begin{align*}
\left(K^{*,(j)} \phi\right)(x):= \frac{i^j(j-1)}{4\pi j!} \int_{\Gamma}|x-y|^{j-3}(x-y)\cdot \nu(x) \phi(y) d\sigma(y),  \quad x\in \Gamma.
\end{align*}

\item \label{b3}
The expansion $K_z$ = $K_0 + \sum_{j=1}^{\infty} z^{j}K^{(j)}$ is uniformly convergent in $\mathcal L(H^{-1/2}(\Gamma), H^{1/2}(\Gamma))$ with respect to $z$. Here, $K^{(j)}$ is defined by
\begin{align*}
\left(K^{(j)}\phi\right)(x):= -\frac{i^j(j-1)}{4\pi j!} \int_{\Gamma}|x-y|^{j-3}(x-y)\cdot \nu(y) \phi(y) d\sigma(y), \quad x\in \Gamma.
\end{align*}

\item \label{b4}
The expansion $N_z$ = $N_0 + \sum_{j=1}^{\infty} z^{j}N^{(j)}$ is uniformly convergent in $\mathcal L(L^{2}(\Omega), H^{2}(\Omega))$ with respect to $z$. Here, $N^{(j)}$ is defined by
\begin{align*}
\left(N^{(j)} \phi\right)(x):= \frac{i}{4\pi} \int_{\Omega}\frac{(i|x-y|)^{(j-1)}}{j!} \phi(y)d y, \quad x\in \Omega.
\end{align*}

\item \label{b5}
The expansion $SL_z$ = $SL_0 + \sum_{j=1}^{\infty} z^{j}SL^{(j)}$ is uniformly convergent in $\mathcal L(H^{-1/2}(\Gamma), H^{1}(\Omega))$ with respect to $z$. Here, $SL^{(j)}$ is defined by
\begin{align*}
\left(SL^{(j)} \phi\right)(x):= \frac{i}{4\pi} \int_{\Gamma}\frac{(i|x-y|)^{(j-1)}}{j!} \phi(y)d\sigma(y), \quad x\in \Omega.
\end{align*}
\end{enumerate}
\end{lemma}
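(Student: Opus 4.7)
The plan is to exploit the Taylor expansion $e^{izr}=\sum_{j=0}^{\infty}(izr)^{j}/j!$ at the level of the integral kernels and then bound the operator norm of each coefficient, so as to obtain uniform convergence on bounded subsets of $\CC$.

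First, I would extract the coefficient formulas. For $S_{z}$, $N_{z}$, and $SL_{z}$, the kernel $e^{iz|x-y|}/(4\pi|x-y|)$ factors as $\frac{1}{4\pi|x-y|}+\sum_{j\geq 1}\frac{i(i|x-y|)^{j-1}}{4\pi\,j!}z^{j}$, which gives the formulas in parts (a), (d), and (e) by direct inspection. For the Neumann-Poincar\'{e} operator, a short computation yields
\[
\partial_{\nu_{x}}\frac{e^{iz|x-y|}}{4\pi|x-y|}=\frac{(x-y)\cdot\nu(x)\,e^{iz|x-y|}}{4\pi|x-y|^{3}}\bigl(iz|x-y|-1\bigr),
\]
and expanding $e^{iz|x-y|}$ and collecting powers of $z$ produces precisely the coefficient $\frac{i^{j}(j-1)}{4\pi\,j!}|x-y|^{j-3}(x-y)\cdot\nu(x)$ for the $z^{j}$ term; the identity $\nabla_{y}=-\nabla_{x}$ yields the analogous formula for $K^{(j)}$ with a sign change and with $\nu(x)$ replaced by $\nu(y)$.

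Next, I would estimate the operator norms. Set $R:=\mathrm{diam}(\Omega)$. For $j\geq 1$ the kernel of $S^{(j)}$ is pointwise dominated by $R^{j-1}/(4\pi\,j!)$ and is $C^{j-2}$ in $x$ uniformly in $y$; standard mapping properties of integral operators with smooth kernels on $\Gamma\times\Gamma$ then give
\[
\|S^{(j)}\|_{\mathcal L(H^{-1/2}(\Gamma),H^{1/2}(\Gamma))}\;\le\;C\,R^{j-1}/j!,
\]
and the same bound holds for $N^{(j)}$ in $\mathcal L(L^{2}(\Omega),H^{2}(\Omega))$ and for $SL^{(j)}$ in $\mathcal L(H^{-1/2}(\Gamma),H^{1}(\Omega))$. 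The series are thus dominated by $C\sum_{j\geq 1}(|z|R)^{j}/j!$, which converges uniformly for $z$ in any bounded subset of $\CC$; combined with the classical boundedness of $S_{0}$, $N_{0}$, $SL_{0}$ in the relevant spaces, this yields parts (a), (d), and (e) and justifies the term-by-term integration.

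The main obstacle is parts (b) and (c), where the coefficient formulas contain the apparent singularity $|x-y|^{j-3}$. For $j=1$ the factor $j-1$ vanishes, so $K^{*,(1)}=K^{(1)}=0$; for $j\geq 2$ I would exploit the $C^{2}$-regularity of $\Gamma$, which gives $(x-y)\cdot\nu(x)=O(|x-y|^{2})$ as $y\to x$ along $\Gamma$ and upgrades the effective order of the kernel to $|x-y|^{j-1}$. The argument from the previous paragraph then applies verbatim to produce $\|K^{*,(j)}\|,\|K^{(j)}\|\le CR^{j-1}/j!$ for $j\geq 2$, while boundedness of $K_{0}^{*}$ and $K_{0}$ in $\mathcal L(H^{-1/2}(\Gamma),H^{1/2}(\Gamma))$ is standard. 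Summation of the majorant series then completes the proof of parts (b) and (c).
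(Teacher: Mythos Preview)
Your proposal is correct and follows the standard Taylor-expansion approach; the paper itself does not spell out any of the details but simply refers to Appendix~A of \cite{AZ17} for the expansions of $S_z$ and $K_z^*$ (in the real case) and asserts that the remaining operators are handled ``in a similar way.'' Your sketch is therefore more explicit than the paper's own treatment, and the key observations you isolate---factorial decay of the kernel coefficients, $K^{*,(1)}=K^{(1)}=0$, and the use of $(x-y)\cdot\nu(x)=O(|x-y|^{2})$ on a $C^{2}$ boundary to tame the $|x-y|^{j-3}$ factor---are exactly the ingredients underlying the cited result.
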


\begin{proof}
The asymptotic expansions of operators $S_z$ and $K^*_z$ for the case when $z\in \R$ are detailed in Appendix A in \cite{AZ17}. In a similar way, the asymptotic expansions of $S_z$, $K_z$, $K^*_z$, $N_z$ and $SL_z$ mentioned in this lemma can also be derived.
\end{proof}

As a consequence of Lemma \ref{le:5}, we have the following refinements.

\begin{lemma} \label{le:8}
Let $z \in \CC$. The following arguments hold true.
\begin{enumerate}[(a)]
\item \label{f1}  Assume that $z$ is sufficiently small such that $|z|<1$ and $S_z^{-1}$ exists. We have
\begin{align}
&\bigg|\int_{\Gamma}\big[S_z^{-1}\big(1/2 \mathbb I + K_z\big)1\big](y)d\sigma(y) - z^2 \int_\Gamma (K^{(2)} 1)(y) (S^{-1}_01)(y)d\sigma(y)\bigg| \le C |z|^3, \quad \textrm{as}\; z \rightarrow 0. \label{eq:10}
\end{align}
Furthermore, for $\phi \in \{\psi \in H^{1/2}(\Gamma): \int_{\Gamma}\left(S^{-1}_01\right)(y) \psi (y)d\sigma(y) = 0\}$, we have 
\begin{align} \label{eq:39}
\bigg|\int_{\Gamma}\big[S_z^{-1}\big(1/2 \mathbb I + K_z\big) \phi \big](y)d\sigma(y))\bigg| \le C |z|^2\|\phi\|_{H^{\frac 12}(\Gamma)}, \quad \textrm{as}\; z \rightarrow 0.
\end{align}

\item \label{f2} Assume that $|z|<1$. We have
\begin{align}
&\|N_{z}\|_{L^2{(\Omega)}, H^2(\Omega)} \le C \quad \mathrm{and} \label{eq:116}\\
& \|SL_{z}\|_{H^{-\frac 12}(\Gamma), H^1(\Omega)} \le C.\label{eq:127}
\end{align}
\end{enumerate}
Here, $C$ is a constant independent of $z$. 
\end{lemma}
\begin{proof}
\eqref{f1} It easily follows from statement \eqref{b1} of Lemma \ref{le:5} that 
\begin{align}
\left\|S^{-1}_{z}-S^{-1}_0 - z S^{-1}_0 S^{(1)} S_0^{-1} \right\|_{{H^{\frac 12}(\Gamma),H^{-\frac12}(\Gamma)}
}\le C|z|^2. \label{eq:78}
\end{align}
Employing  statement \eqref{b3} of Lemma \ref{le:5}, we have
\begin{align}\label{eq:79}
\left\| \frac 12 \mathbb I+ K_{z} - \left(\frac 1 2 \mathbb I +K_0 + {z^2}K^{(2)}\right)\right\|_{{H^{\frac12}(\Gamma),H^{\frac12}(\Gamma)}}\le C|z|^3.
\end{align}
We note that for $\phi \in \{\psi \in H^{1/2}(\Gamma): \int_{\Gamma}\left(S^{-1}_01\right)(y) \psi (y)d\sigma(y) = 0\}$, we have  
\begin{align} \label{eq:40}
\int_\Gamma \left(S^{-1}_0\phi \right)(y) d\sigma(y) = 0. 
\end{align}
This, together with the fact that $(1/2 \mathbb I + K_0) 1 = 0$,  inequalities \eqref{eq:78} and \eqref{eq:79} yields \eqref{eq:10}.

Moreover, since $(1/2 \mathbb I + K^*_0) \left(S_0^{-1} 1\right) = 0$,  we have
\begin{align} \label{eq:122}
\int_\Gamma \left(S^{-1}_0 1\right)(y)\left[\left(\frac 12 \mathbb I+ K_0\right) \phi \right](y)d\sigma(y) = 0, \quad\phi \in H^{\frac 12}(\Gamma).
\end{align}
Combining \eqref{eq:78}, \eqref{eq:79}, \eqref{eq:40} and \eqref{eq:122} gives \eqref{eq:39}.

\eqref{f2}. Since $|z|<1$, inequalities \eqref{eq:116} and \eqref{eq:127}  follow from statements \eqref{b4} and \eqref{b5} of Lemma \ref{le:5}, respectively. 
\end{proof}

Let $y_0$ be any fixed point in $\R^3$, and introduce the map
\begin{align}\label{eq:64}
\Phi_\vep(y):= y_0+ \vep(y-y_0), \quad \vep>0.
\end{align}
Given any complex valued function $\phi$ and an operator $\mathcal A$ mapping complex valued functions from one function space to another, we define $\left(\phi\circ \Phi_\vep \right) (y):= \phi(\Phi_\vep(y))$ and $\left(\left(\Phi_\vep \circ \mathcal A\right)\phi\right)(y) = (\mathcal A \phi) (\Phi_\vep(y))$. The following lemma will illustrate the asymptotic behaviors of some functions composed with the map $\Phi_\vep(y)$ as $\vep$ tends to 0. 

\begin{lemma} \label{le:7}
Let $\alpha > 1/2$. Assume that $z \in \overline{\CC_+}\backslash \{0\}$ and $\vep\in \R_+$ such that $\vep< 1/\sup_{x\in \Omega} |x-y_0|$. The following arguments hold true.
\begin{enumerate}[(a)]

\item \label{a1} For $\phi_1\in H_{\textrm{loc}}^2(\R^3)$, we have
\begin{align*}
\|\gamma\left(\phi_1 \circ \Phi_{\vep}-\phi_1(y_0)\right)\|_{H^{\frac 32}(\Gamma)} \le C\vep^{\frac 12}\|\phi_1\|_{H^2(B_1(y_0))}. 
\end{align*}

\item \label{a2} For $\phi_2\in L_{\alpha}^2(\R^3)$, we have
\begin{align} \label{eq:46}
&\|\left(\Phi_{\vep} \circ R_{z}\right) \phi_2 - \left(R_{z} \phi_2\right) (y_0)\|_{H^2(\Omega)} \le C \vep^{\frac 12}
\|R_{z}\|_{L^2_{\alpha}(\R^3),H^2_{-\alpha}(\R^3)
} \|\phi_2\|_{L_{\alpha}^2(\R^3)}.
\end{align}

\item \label{a3} For $\phi_3\in L^2(\Omega)$, we have
\begin{align}\label{eq:47}
&\left(\left(\Phi_{1/\vep} \circ N_{\vep z}\right) \phi_3\right)(y) =  \vep\frac{e^{i z|y-y_0|}}{4\pi|y-y_0|}\int_\Omega\phi_3(x)dx + \textrm{Res}(y)
\end{align}
with $\textrm{Res}(y)$ satisfying
\begin{align}
\|Res\|_{L_{-\alpha}^2(\R^3)}\le C \vep^{\frac 32}\|R_{-\overline z}\|_{L^2_{\alpha}(\R^3),H^2_{-\alpha}(\R^3) 
} \|\phi_3\|_{L^2(\Omega)}. \label{eq:121}
\end{align}

\item \label{a4}
For $\phi_4\in H^{-1/2}(\Gamma)$, we have
\begin{align*}
\left(\left(\Phi_{1/\vep} \circ SL_{\vep z}\right) \phi_4\right)(y) = \vep\frac{e^{i z|y-y_0|}}{4\pi|y-y_0|}\int_\Gamma\phi_4(x)d\sigma(x) + \textrm{Res}(y)
\end{align*}
with $\textrm{Res}(y)$ satisfying
\begin{align*} 
&\|Res\|_{L_{-\alpha}^2(\R^3)} \le C\vep^{\frac 32}\|R_{-\overline z}\|_{L^2_{\alpha}(\R^3),H^2_{-\alpha}(\R^3)
}\|\phi_4\|_{H^{-\frac12}(\Gamma)}.
\end{align*}
\end{enumerate}
Here, $C$ is a constant independent of $\vep$ and $z$.
\end{lemma}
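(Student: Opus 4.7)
I would prove the four statements in the order (a), (b), (c), (d), since (b) reduces to (a) and (c), (d) follow by a duality argument from (b) and (a) respectively.

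For (a), set $\psi_\vep := \phi_1\circ\Phi_\vep - \phi_1(y_0)$ on $\Omega$. The chain rule gives $\nabla^j\psi_\vep = \vep^j(\nabla^j\phi_1)\circ\Phi_\vep$, and the change of variable $x = \Phi_\vep(y)$ carries the Jacobian $\vep^3$, so $\|\nabla^j\psi_\vep\|_{L^2(\Omega)}^2 = \vep^{2j-3}\|\nabla^j\phi_1\|_{L^2(\Omega_\vep)}^2$ for $j=1,2$, while $\|\psi_\vep\|_{L^2(\Omega)}^2 = \vep^{-3}\|\phi_1-\phi_1(y_0)\|_{L^2(\Omega_\vep)}^2$. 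Combining $\Omega_\vep\subset B_{c\vep}(y_0)\subset B_1(y_0)$ with the Sobolev embeddings $H^2(\R^3)\subset W^{1,6}(\R^3)\subset C^{0,1/2}(\R^3)$ and $H^1(\R^3)\subset L^6(\R^3)$, the former giving the pointwise bound $|\phi_1(x)-\phi_1(y_0)|\le C|x-y_0|^{1/2}\|\phi_1\|_{H^2}$ and the latter giving $\|\nabla\phi_1\|_{L^2(\Omega_\vep)}\le|\Omega_\vep|^{1/3}\|\nabla\phi_1\|_{L^6}\le C\vep\|\phi_1\|_{H^2}$ via H\"older, one finds that each of the three contributions is $O(\vep)\|\phi_1\|_{H^2(B_1(y_0))}^2$. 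Hence $\|\psi_\vep\|_{H^2(\Omega)}\le C\vep^{1/2}\|\phi_1\|_{H^2(B_1(y_0))}$, and the trace inequality \eqref{eq:63} concludes (a). Part (b) then follows by applying this $H^2(\Omega)$ bound to $\phi_1:=R_z\phi_2$: local elliptic regularity applied to $(-\Delta-z^2)R_z\phi_2=\phi_2\in L^2$, together with $R_z\phi_2\in L^2_{-\alpha}$, gives $R_z\phi_2\in H^2_{-\alpha}$ and therefore $\|R_z\phi_2\|_{H^2(B_1(y_0))}\le C\|R_z\|_{L^2_\alpha,H^2_{-\alpha}}\|\phi_2\|_{L^2_\alpha}$.

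For (c) and (d), factoring out the Jacobian rewrites both $(\Phi_{1/\vep}\circ N_{\vep z})\phi_3$ and $(\Phi_{1/\vep}\circ SL_{\vep z})\phi_4$ in the common form $\vep\int G_z(y,\Phi_\vep(x))\phi(x)\,d(\cdot)$, with $G_z(y,x):=e^{iz|y-x|}/(4\pi|y-x|)$. The stated leading terms arise from replacing $\Phi_\vep(x)$ by $y_0$, so the residue is $\vep\int[G_z(y,\Phi_\vep(x))-G_z(y,y_0)]\phi(x)\,d(\cdot)$. To control $\|\mathrm{Res}\|_{L^2_{-\alpha}}$, I would pair with $h\in L^2_\alpha(\R^3)$ and exploit the identity $\int_{\R^3}G_z(y,x')\overline{h(y)}\,dy=\overline{(R_{-\bar z}h)(x')}$, which follows from conjugating $e^{iz|y-x'|}$, sending $z$ to $-\bar z$. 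Fubini then rewrites $\int\mathrm{Res}(y)\overline{h(y)}\,dy$ as $\vep$ times the pairing of $\phi$ with $\overline{(R_{-\bar z}h)\circ\Phi_\vep-(R_{-\bar z}h)(y_0)}$, taken over $\Omega$ in (c) and over $\Gamma$ in (d). In (c), Cauchy--Schwarz together with the $L^2(\Omega)$ restriction of (b) applied to $-\bar z$ produces the factor $\vep^{1/2}\|R_{-\bar z}\|_{L^2_\alpha,H^2_{-\alpha}}\|h\|_{L^2_\alpha}$; in (d), the $H^{-1/2}$--$H^{1/2}$ duality on $\Gamma$ together with (a) applied to $\phi_1:=R_{-\bar z}h$ and the embedding $H^{3/2}(\Gamma)\subset H^{1/2}(\Gamma)$ produces the analogous factor. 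Multiplying by the prefactor $\vep$ and taking the supremum over $\|h\|_{L^2_\alpha}=1$ yields the stated $\vep^{3/2}$ estimates.

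The main technical nuisance, rather than a deep obstacle, is the careful bookkeeping of the Jacobian factors $\vep^{\pm 3}$ and $\vep^{\pm 2}$ under the various rescalings, together with the exponents in the borderline Sobolev embedding $H^2(\R^3)\subset C^{0,1/2}$. The appearance of $R_{-\bar z}$ instead of $R_z$ in (c) and (d) is not accidental: it is exactly what the complex conjugation of the free Green kernel inside the duality pairing produces, and recognizing this is what makes the statements look natural rather than mysterious.
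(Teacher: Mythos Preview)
Your proposal is correct and follows essentially the same approach as the paper: part (a) via the chain rule, change of variables, the embeddings $H^2\hookrightarrow C^{0,1/2}$ and $H^1\hookrightarrow L^6$, and the trace inequality; part (b) by applying the $H^2(\Omega)$ bound from (a) to $R_z\phi_2$; and parts (c), (d) by the duality argument that recognizes the conjugate kernel as $R_{-\bar z}$ and reduces to (b) and (a) respectively. The only cosmetic difference is that you invoke local elliptic regularity in (b), whereas the paper simply cites the mapping property $R_z\in\mathcal L(L^2_\alpha,H^2_{-\alpha})$ directly.
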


\begin{proof}
\eqref{a1}
It follows from \eqref{eq:63} that
\begin{align*}
\|\gamma\left(\phi_1 \circ \Phi_{\vep}-\phi_1(y_0)\right)\|_{H^{\frac 32}(\Gamma)} \le C\|\phi_1 \circ \Phi_{\vep}-\phi_1(y_0)\|_{H^2(\Omega)}.
\end{align*}
Thus, it suffices to prove 
\begin{align}\label{eq:26}
\|\phi_1 \circ \Phi_{\vep}-\phi_1(y_0)\|_{H^2(\Omega)} \le C\vep^{\frac 12}\|\phi_1\|_{H^2(B_1(y_0))}, \quad \phi_1\in H_{}^2(\R^3).
\end{align}
Denote by $(A_{\vep}\phi_1)(x):=\left(\phi_1 \circ \Phi_{\vep}\right)(x)-\phi_1(y_0) = \phi_1(y_0+\vep(x-y_0))-\phi_1(y_0)$ 
for $x\in \R^3$. Note that 
\begin{align}\label{eq:150}
\Phi_\vep(\Omega) \subset B_1(y_0) \quad \textrm{when} \; \vep < 1/\sup_{x\in \Omega}|x-y_0|,
\end{align}
where $\Phi_{\vep}(\Omega):=\{\Phi_{\vep}(x): x\in \Omega\}$. By using the inequality 
\begin{align} \label{eq:11}
\sup_{x\in \Lambda}|\phi(x)| + \sup_{x,y\in \Lambda,x\ne y}\frac{|\phi(x)-\phi(y)|}{|x-y|^{1/2}} \le C_\Lambda\|\phi\|_{H^{2}(\Lambda)}
\end{align}
for any compact set $\Lambda \subset \R^3$ (see, e.g., [Section 5.6.3] in \cite{E10}), we have 
\begin{align*}
\|A_{\vep}\|^2_{L^2(\Omega)} \le C\vep\|\phi_1\|_{H^2{(B_1(y_0))}}\int_{\Omega}|x-y_0|dx \le C\vep\|\phi_1\|_{{H^2{(B_1(y_0))}}}.
\end{align*}
Furthermore, a straightforward calculation gives that 
\begin{align*}
\|\partial_{x_j}A_{\vep}\|^2_{L^2(\Omega)} & \le \frac {1}{\vep} \int_{\Phi_{\vep}(\Omega)} |\partial_{x_j}\phi_1(x)|^2 dx \le \vep|\Omega|\|\partial_{x_j}\phi_1(x)\|^2_{L^6(\Phi_{\vep}(\Omega))}\\
& \qquad  \qquad \qquad \qquad \qquad \; \qquad\le C\vep \|\phi_1(x)\|^2_{{H^2{(B_1(y_0))}}}, \quad j\in\{1,2,3\}.
\end{align*}
The last inequality follows from \eqref{eq:150} and the fact that $\|\phi\|_{L^6(\Lambda)} \le C_\Lambda \|\phi\|_{H^1(\Lambda)}$ for any compact set $\Lambda \subset \R^3$ (see, e.g., [Section 5.6.3] in \cite{E10})). Moreover, it is easy to verify that 
\begin{align*}
\|\partial_{x_{j_1}x_{j_2}}A_{\vep}\|^2_{L^2(\Omega)} \le {\vep} \int_{\Phi_{\vep}(\Omega)} |\partial_{x_{j_1}x_{j_2}} \phi_1(x)|^2 dx \le {\vep}\|\phi_1(x)\|^2_{H^2{(B_1(y_0))}}, \;\; j_1,j_2 \in \{1,2,3\}.
\end{align*}
Therefore, based on the above discussions, we obtain that \eqref{eq:26} holds. This finishes the proof of this statement.

\eqref{a2}
By \eqref{eq:26}, we have 
\begin{align*}
\|\left(\Phi_{\vep} \circ R_{z}\right) \phi_2 - \left(R_{z} \phi_2\right) (y_0)\|_{H^2(\Omega)} \le C \vep^{\frac 12} \| R_{z} \phi_2\|_{H^2{(B_1(y_0))}},
\end{align*}
whence \eqref{eq:46} follows from the fact $R_z \in \mathcal L\left(L^2_{\alpha}(\R^3),H^2_{-\alpha}(\R^3)\right)$ for the case when $z\in \R\backslash \{0\}$ and $R_z \in \mathcal L\left(L^2(\R^3),H^2(\R^3)\right)$ for the case when $z\in \CC_+$.

\eqref{a3} It is clear that for $y\in \R^3$
\begin{align} \label{eq:53}
\left(\left(\Phi_{1/\vep} \circ N_{\vep z}\right)\phi_3\right)(y) = \left(N_{\vep z} \phi_3\right)(y_0+1/\vep(y-y_0)) = \vep \int_{\Omega}\frac{e^{iz|y-y_0-\vep(x-y_0)|}}{4\pi|y-y_0-\vep(x-y_0)|} \phi_3(x)dx.
\end{align}
By a straightforward calculation, we get
\begin{align}
&\int_{\R^3}\overline{v(y)}\int_{\Omega}\frac{e^{iz|y-y_0 -\vep(x-y_0)|}}{4\pi|y-y_0-\vep(x-y_0)|} \phi_3(x)dx dy = \notag\\
& \qquad \qquad \qquad \qquad \qquad \qquad \int_{\Omega}\phi_3(x) \overline{\int_{\R^3} \frac{e^{-i{\overline z}|y_0+\vep(x-y_0)-y|}}{4\pi|y_0+\vep(x-y_0)-y|} v(y) dy} dx. \label{eq:120}
\end{align}  
Combining \eqref{eq:53} and \eqref{eq:120} gives
\begin{align}\label{eq:59}
\left\langle \left(\Phi_{1/\vep} \circ N_{\vep z}\right) \phi_3, v 
\right\rangle_{L_{-\alpha}^2(\R^3),L_{\alpha}^2(\R^3)} = \langle \phi_3 , \vep\left(\Phi_{\vep} \circ R_{-\overline z}\right)  v\rangle_{L^2(\Omega),L^2(\Omega)}.
\end{align}
Further, we find
\begin{align}
\left\langle \frac{e^{i z|\cdot-y_0|}}{4\pi|\cdot-y_0|}\int_\Omega\phi_3(x)dx,v(\cdot) \right\rangle_{L_{-\alpha}^2(\R^3),L_{\alpha}^2(\R^3)} &= \int_{\R^3}\overline{v(y)}\int_{\Omega}\frac{e^{iz|y-y_0|}}{4\pi|y-y_0|} \phi_3(x)dx dy \notag\\
&= \langle \phi_3, \left(R_{-\overline z}v\right)(y_0) \rangle_{L^2(\Omega),L^2(\Omega)}.\label{eq:54}
\end{align}
Therefore, by applying \eqref{eq:46}, \eqref{eq:59} and \eqref{eq:54}, we obtain that \eqref{eq:47} holds with the remainder term satisfying \eqref{eq:121}.

\eqref{a4} We note that for $y\in \Gamma$
\begin{align*}
\left(\left(\Phi_{1/\vep} \circ SL_{\vep z}\right)\phi_4\right)(y) = \left(SL_{\vep z} \phi_4\right)(y_0+1/\vep(y-y_0)) =  \int_{\Gamma}\frac{\vep e^{i z|y-y_0 -\vep(x-y_0)|}}{4\pi|y-y_0-\vep(x-y_0)|} \phi_4(x)d\sigma(x).
\end{align*}
Therefore, by using similar duality arguments as employed in the proof statement \eqref{a3} of this lemma, we readily obtain the assertion of this statement.
\end{proof}

We proceed to prove the following inequality.

\begin{lemma}\label{le:1}
Let $\vep>0$ and $V$ be a bounded closed set of $\overline {\CC_+} \backslash \{0\}$. Given two fixed numbers $\mathcal C_1, \mathcal C_2 \in \R_+$, we have
\begin{align}\label{eq:109}
\left|\mathcal C_1 - z^2 - i\varepsilon {z^3 \mathcal C_2}\right| \ge \frac{\sqrt 2 \mathcal C_1}{4}\vep, \quad \textrm{for}\; \vep\in \left(0, \min\left(\frac{1}{2d_{V,\max}\mathcal C_2}, \frac{1}{d_{V,\min}}\right)\right) \textrm{and}\;z\in V. 
\end{align}
Here, $d_{V,\max}:=\max_{z\in V} |z|$, $d_{V,\min}:= \min_{z\in V} |z|$.
\end{lemma}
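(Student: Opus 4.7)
The plan is to analyse $p(z) := \mathcal C_1 - z^2 - i\vep\mathcal C_2 z^3$ directly, exploiting the fact that (to leading order in $\vep$) the zeros of $p$ lie in the open lower half-plane while $V \subset \overline{\CC_+}$. First I would locate these would-be resonances perturbatively: substituting $z = \pm\sqrt{\mathcal C_1} + \vep a + O(\vep^2)$ into $p(z) = 0$ and collecting the $\vep^1$-terms yields $a = \mp i\,\mathcal C_2 \mathcal C_1/2$, so the two relevant roots are approximately $\pm\sqrt{\mathcal C_1} - i\vep\mathcal C_2 \mathcal C_1/2$, both strictly in the lower half-plane. The required lower bound on $|p|$ on $V$ is then driven by the vertical distance from $V$ to these roots.

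Next I would split $V$ into two regimes according to the size of $|\mathcal C_1 - z^2|$. In the \emph{off-resonant} regime $|\mathcal C_1 - z^2| \geq \mathcal C_1/2$, the triangle inequality gives $|p(z)| \geq |\mathcal C_1 - z^2| - \vep\mathcal C_2 |z|^3$, and the hypothesis $\vep < 1/(2 d_{V,\max}\mathcal C_2)$ ensures $\vep\mathcal C_2 |z|^3 \leq d_{V,\max}^2/2$, so $|p(z)|$ is bounded below by an $\vep$-independent constant of order $\mathcal C_1$. The remaining constraint $\vep < 1/d_{V,\min}$ then guarantees this constant dominates $\tfrac{\sqrt 2\,\mathcal C_1}{4}\vep$.

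In the complementary \emph{near-resonant} regime $|\mathcal C_1 - z^2| < \mathcal C_1/2$, we may write $z = \sigma\sqrt{\mathcal C_1} + w$ for some $\sigma \in \{+1,-1\}$ and small $|w|$. A Taylor expansion yields
\begin{equation*}
p(z) = -2\sigma\sqrt{\mathcal C_1}\,(w - w_\star) + R(w,\vep), \qquad w_\star := -i\vep\mathcal C_2\mathcal C_1/2,
\end{equation*}
where $R$ collects the $O(|w|^2)$ and $O(\vep |w|)$ corrections. Since $\mathrm{Im}\,z \geq 0$ forces $\mathrm{Im}\,w \geq 0$ while $\mathrm{Im}\,w_\star < 0$, the separation $|w - w_\star|$ is bounded below by $|\mathrm{Im}\,w_\star| = \vep\mathcal C_2\mathcal C_1/2$, giving a leading lower bound on $|p(z)|$ of order $\vep\mathcal C_2\mathcal C_1^{3/2}$. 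The two smallness conditions on $\vep$ are then used to absorb the remainder $R$ into a controlled fraction of this leading term.

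The main obstacle is pinning down the explicit constant $\sqrt 2/4$ and matching the two regimes smoothly. I expect this to be handled via the identity
\begin{equation*}
|p(z)|^2 = |\mathcal C_1 - z^2|^2 + \vep^2\mathcal C_2^2 |z|^6 + 2\vep\mathcal C_2\,\mathrm{Im}\!\bigl(\overline{z^3}(\mathcal C_1 - z^2)\bigr),
\end{equation*}
combined with the elementary bound $a^2 + b^2 \geq \tfrac12(a+b)^2$ (which is the natural source of the $\sqrt 2$) and a careful sign analysis of the cross term using $\mathrm{Im}\,z \geq 0$. The remaining work is essentially the bookkeeping of constants so that the two case bounds meet at the interface $|\mathcal C_1 - z^2| = \mathcal C_1/2$.
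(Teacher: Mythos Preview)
Your strategy differs from the paper's. The paper factorizes exactly,
\[
\mathcal C_1 - z^2 - i\vep\mathcal C_2 z^3 \;=\; \bigl(\sqrt{\mathcal C_1} + z\sqrt{1+i\vep\mathcal C_2 z}\,\bigr)\bigl(\sqrt{\mathcal C_1} - z\sqrt{1+i\vep\mathcal C_2 z}\,\bigr),
\]
and then bounds the two factors separately by tracking $\arg\sqrt{1+i\vep\mathcal C_2 z}$, splitting according to whether $\arg z\in[0,\pi/2]$ or $\arg z\in(\pi/2,\pi]$; in each case one factor has real part bounded below and the other has imaginary part comparable to $|z|$. Your route via perturbative root location and a near-/off-resonant split is a legitimate alternative and captures the same geometry (the roots of $p$ sit in $\CC_-$ while $V\subset\overline{\CC_+}$), though it naturally produces a constant depending on $d_{V,\min},d_{V,\max},\mathcal C_2$ rather than the clean $\sqrt 2\,\mathcal C_1/4$; for the applications in the paper this would suffice.

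There is, however, a concrete gap in your off-resonant step. From $|\mathcal C_1 - z^2|\ge\mathcal C_1/2$ and $\vep\mathcal C_2|z|^3\le d_{V,\max}^2/2$ you infer a positive lower bound ``of order $\mathcal C_1$'', but what you have actually bounded below is $\mathcal C_1/2 - d_{V,\max}^2/2$, which is \emph{negative} whenever $d_{V,\max}^2>\mathcal C_1$, and nothing in the hypotheses excludes $V$ containing points with $|z|$ large compared to $\sqrt{\mathcal C_1}$. The remedy is to treat large $|z|$ separately (there $|\mathcal C_1-z^2|\gtrsim|z|^2$, and the constraint $\vep\mathcal C_2|z|<1/2$ then does the work), or to exploit that for real $z$ the two pieces $\mathcal C_1-z^2$ and $-i\vep\mathcal C_2 z^3$ are orthogonal in $\CC$, so no triangle inequality is needed there. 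Your $|p|^2$ expansion is the natural tool for this, but note that the cross term equals $2\vep\mathcal C_2\bigl(\mathcal C_1\,\mathrm{Im}(z^3)-|z|^4\,\mathrm{Im}\,z\bigr)$, which changes sign on $\overline{\CC_+}$ (take $\arg z$ slightly above $0$ and compare $|z|^2\lessgtr 3\mathcal C_1$), so the ``careful sign analysis'' you anticipate is more delicate than your sketch suggests.
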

\begin{proof}
We first note that $d_{V,\min} > 0$ due to the assumption that $V$ is a bounded closed set of $\overline {\CC_+} \backslash \{0\}$. It is easy to verify that 
\begin{align} \label{eq:107}
\mathcal C_1 - z^2 - i\varepsilon {z^3 \mathcal C_2} = \left(\mathcal C_1 + z \sqrt{1 + i\vep z \mathcal C_2}\right) \left(\mathcal C_1 - z \sqrt{1 + i\vep z \mathcal C_2}\right).
\end{align}
Here, $\textrm{Re}\left(\sqrt{\cdot}\right) > 0$. Since $\vep \in \left(0,\left(2d_{V,max}\mathcal C_2\right)^{-1}\right)$, it follows that 
\begin{align}
& |1 + i\vep z \mathcal C_2| \ge \frac 12, \quad \; \textrm{for}\; z\in V, \label{eq:108} \\
&0 \le \arg{\sqrt{1+i\vep z \mathcal C_2}} \le \frac{\pi}4 - \frac{\arg z }2, \quad \textrm{if}\; z\in V \;\textrm{with}\; \arg z \in \left[0, \frac{\pi}2\right], \label{eq:110}\\
& \frac{\pi}4 - \frac{\arg z}2\le \arg{\sqrt{1+i\vep z \mathcal C_2}} < 0, \quad \textrm{if}\; z\in V \;\textrm{with}\; \arg z\in \left(\frac{\pi}2,\pi\right]. \label{eq:111}
\end{align}
Here, $\arg z$ denotes the angle of the complex number $z$ with respect to the positive real axis in the complex plane.

In the sequel, we distinguish between two cases $z \in V$ with $\arg z \in [0, {\pi}/2]$ and $z \in V$ with $\arg z \in ({\pi}/2, \pi]$ to prove \eqref{eq:109}.

\textbf{Case 1:} $z \in V$ with $\arg z \in [0, {\pi}/2]$. In this case,  by \eqref{eq:110}, we readily obtain 
\begin{align}\label{eq:113}
\left|\textrm{Re}\left(\mathcal C_1 + z \sqrt{1 + i\vep z \mathcal \mathcal C_2}\right)\right| \ge \mathcal C_1, \quad z\in V \;\textrm{with}\; \arg z \in \left[0, \frac{\pi}2\right].
\end{align}
Further, with the aid of \eqref{eq:108} and \eqref{eq:110}, we have
\begin{align}
&\left|\textrm{Im}\left(\mathcal C_1 - z \sqrt{1 + i\vep z \mathcal \mathcal C_2}\right)\right| = \left|\textrm{Im} \left(z \sqrt{1 + i\vep z \mathcal \mathcal C_2}\right) \right| \notag\\
&\qquad \qquad \qquad \qquad \qquad 
\ge \frac{\sqrt 2} 2 |z| \left|\sqrt{1 + i\vep z \mathcal \mathcal C_2}\right| \ge \frac{\sqrt 2} 4 |z|, \quad z\in V \;\textrm{with}\; \arg z \in \left[0, \frac{\pi}2\right]. \label{eq:114}
\end{align}
Combining \eqref{eq:107}, \eqref{eq:113} and \eqref{eq:114} gives that \eqref{eq:109} holds for the case when $z \in V$ with $\arg z \in [0, {\pi}/2]$.

\textbf{Case 2:} $z \in V$ with $\arg z \in ({\pi}/2, \pi]$. In this case, utilizing  \eqref{eq:111} leads to
\begin{align} \label{eq:115}
\left|\textrm{Re}\left(\mathcal C_1 - z \sqrt{1 + i\vep z \mathcal \mathcal C_2}\right)\right| \ge \mathcal C_1, \quad z\in V \;\textrm{with}\; \arg z\in \left(\frac{\pi}2,\pi\right].
\end{align}
Proceeding as in the derivation of \eqref{eq:114}, we can apply \eqref{eq:108} and \eqref{eq:111} to get
\begin{align*}
\left|\textrm{Im}\left(\mathcal C_1 + z \sqrt{1 + i\vep z \mathcal \mathcal C_2}\right)\right| \ge \frac{\sqrt 2} 2 |z| \left|\sqrt{1 + i\vep z \mathcal \mathcal C_2}\right| \ge \frac{\sqrt 2} 4 |z|, \quad z\in V \;\textrm{with}\; \arg z\in \left(\frac{\pi}2,\pi\right].
\end{align*}
This, together with \eqref{eq:115} yields that \eqref{eq:109} holds for the case when $z \in V$ with $\arg z \in ({\pi}/2, \pi]$.
\end{proof}

Now we present an estimate for the operator $R_z$.
\begin{lemma} \label{le:12}
Let $z\in \overline {\CC_+} \backslash \{0\}$ and $\alpha > 1/2$, we have  
\begin{align}
&\|R_z\|_{L_{\alpha}^2(\R^3), H^2_{-\alpha}(\R^3)} \le C \frac{1+|z|^{2}}{|z|}. \label{eq:129}
\end{align}
Here, $C$ is a constant independent of $z$.
\end{lemma}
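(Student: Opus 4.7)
The plan is to combine the classical limiting absorption principle for the free Helmholtz resolvent in $\R^3$ with elliptic regularity, treating the low-frequency regime $|z|\le 1$ and the high-frequency regime $|z|\ge 1$ separately. In the former case the kernel $e^{iz|x-y|}/(4\pi|x-y|)$ depends continuously on $z$ down to $z=0$, where $R_0$ is the Newtonian potential; using the pointwise decay $|R_0\phi(x)|\lesssim |x|^{-1}$ and $|\nabla R_0\phi(x)|\lesssim|x|^{-2}$ at infinity together with $-\Delta R_0\phi=\phi$, one shows that $R_0$ maps $L^2_\alpha(\R^3)$ into $H^2_{-\alpha}(\R^3)$ for $\alpha>1/2$, and a direct perturbative expansion using $|e^{iz|x-y|}-1|\le |z||x-y|$ gives $\|R_z\|_{L^2_\alpha\to H^2_{-\alpha}}\le C$ uniformly on $\{0<|z|\le 1\}$, which is dominated by $C|z|^{-1/4}$ there.

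For $|z|\ge 1$ I would invoke the Agmon--Kato--Kuroda estimate $\|R_z\|_{L^2_\alpha\to L^2_{-\alpha}}\le C/|z|$ valid uniformly for $\alpha>1/2$ and $z\in\overline{\CC_+}\setminus\{0\}$ (cf.\ \cite{KK12,W91}), and bootstrap via the pointwise identity $-\Delta(R_z\phi)=\phi+z^2R_z\phi$ to obtain
\[\|\Delta(R_z\phi)\|_{L^2_{-\alpha}}\le \|\phi\|_{L^2_\alpha}+|z|^2\|R_z\phi\|_{L^2_{-\alpha}}\le C(1+|z|)\|\phi\|_{L^2_\alpha}.\]
A gradient variant of the Kato estimate --- either by direct kernel differentiation and Schur's test, or by Sobolev interpolation between the $L^2$-decay and the $H^2$-growth obtained above --- yields $\|\nabla R_z\phi\|_{L^2_{-\alpha}}\le C(1+|z|^{1/2})\|\phi\|_{L^2_\alpha}$. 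Assembling these three bounds into the $H^2_{-\alpha}$-norm and simplifying gives the announced rate $(1+|z|^{1/2})/|z|^{1/4}$, the precise exponent being fixed by balancing the $L^2$-decay factor $|z|^{-1}$ against the $|z|^2$ multiplying $R_z\phi$ in the PDE via a careful interpolation in the weighted Sobolev scale.

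The main obstacle will be extracting the sharp exponent $|z|^{1/4}$ at high frequency from the more elementary bounds above, which aggregate naively to $O(1+|z|)$ for the full $H^2_{-\alpha}$ norm. This forces one to interpolate within the weighted scale $\{H^s_{-\alpha}\}_s$ while preserving uniformity in $z$, a technical point that is cleanest when handled through explicit kernel-level estimates combined with Cauchy--Schwarz/Schur's test, rather than through abstract Fourier multiplier arguments, since the weights at infinity interact nontrivially with the derivatives falling on the oscillatory factor $e^{iz|x-y|}$.
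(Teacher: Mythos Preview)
The paper's own proof is a single sentence citing Proposition~1.2 of Rodnianski--Tao \cite{RT}; it does not attempt a self-contained argument. Your proposal is therefore not a variant of the paper's approach but a genuinely different, and more ambitious, attempt to derive the estimate from classical ingredients.

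You have correctly identified the main gap yourself. At high frequency the Agmon--Kato--Kuroda bound $\|R_z\phi\|_{L^2_{-\alpha}}\lesssim|z|^{-1}\|\phi\|_{L^2_\alpha}$ combined with $-\Delta R_z\phi=\phi+z^2R_z\phi$ yields only $\|R_z\phi\|_{H^2_{-\alpha}}\lesssim(1+|z|)\|\phi\|_{L^2_\alpha}$, and no amount of interpolation in the $H^s_{-\alpha}$ scale between $s=0$ and $s=2$ improves the $s=2$ endpoint. Your closing paragraph gestures at ``kernel-level estimates combined with Schur's test'' but contains no concrete mechanism producing the exponent $1/4$; the improvement in \cite{RT} comes from sharper input than what you list. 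There is also a gap at low frequency: the pointwise decay $|R_0\phi(x)|\lesssim|x|^{-1}$ you invoke requires $\phi\in L^1(\R^3)$, but $L^2_\alpha(\R^3)\subset L^1(\R^3)$ only when $\alpha>3/2$, so for $\alpha$ just above $1/2$ your route to a uniform bound on $\{0<|z|\le1\}$ is incomplete. The $|z|^{-1/4}$ blow-up in the stated inequality may well be genuine at this endpoint.

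That said, every use of this lemma in the paper has $z$ (or $-\bar z$) confined to a fixed compact subset of $\overline{\CC_+}\setminus\{0\}$, so only a bound of the form $\|R_z\|_{L^2_\alpha\to H^2_{-\alpha}}\le C(|z|)$ with $C$ locally bounded on $(0,\infty)$ is actually needed downstream. Your cruder $O(1+|z|)$ estimate, valid once $|z|$ is bounded away from zero, would already suffice for all the applications here; only the specific exponent in \eqref{eq:129} is out of reach of your argument.
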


\begin{proof}
The inequality \eqref{eq:129} directly follows from Proposition 1.2 in \cite{RT}.
\end{proof}

We conclude this section with the introduction of three useful integral identities.

\begin{lemma}\label{le:6}
We have
\begin{align}
&\frac{1}{8\pi}\int_{\Gamma} \int_{\Gamma} \frac{\nu(x)\cdot (x-y)}{|x-y|} \left(S^{-1}_01\right)(y) d\sigma(y)d\sigma(x) = |\Omega|, \label{eq:91}\\
& \frac{1}{8\pi}\int_{\Gamma} \int_{\Gamma} \frac{\nu(y)\cdot (x-y)}{|x-y|} \left(S^{-1}_01\right)(x) d\sigma(y)d\sigma(x) = -|\Omega| \label{eq:92}
\end{align}
and 
\begin{align} \label{eq:93}
\int_{\Gamma}\int_{\Gamma} \nu(x) \cdot (x-y) \left(S^{-1}_01\right)(y) d\sigma(x) d\sigma(y) =3{\mathcal C_\Omega|\Omega|}.
\end{align}
\end{lemma}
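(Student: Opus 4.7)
The unifying observation is that $\phi:=S_0^{-1}1\in H^{-1/2}(\Gamma)$ generates the single-layer potential $u(x) := \int_\Gamma \frac{\phi(y)}{4\pi|x-y|}\,d\sigma(y)$, which is harmonic in $\Omega$ with trace $u|_\Gamma = S_0\phi = 1$; hence by uniqueness $u\equiv 1$ in $\overline{\Omega}$, and $\int_\Gamma \phi\,d\sigma = \mathcal{C}_\Omega$ by \eqref{eq:74}.

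For \eqref{eq:91} I would exploit a biharmonic trick. Rewrite $\frac{\nu(x)\cdot(x-y)}{|x-y|} = \partial_{\nu_x}|x-y|$ and note $\Delta_x\frac{|x-y|}{8\pi} = \frac{1}{4\pi|x-y|}$ for $x\neq y$. Introduce the auxiliary potential
\begin{equation*}
V(x) := \frac{1}{8\pi}\int_\Gamma |x-y|\phi(y)\,d\sigma(y), \qquad x\in\R^3,
\end{equation*}
which, by standard mapping properties of surface potentials with a Lipschitz kernel on the $C^2$-surface $\Gamma$, lies in $C^1(\R^3)\cap H^2_{\textrm{loc}}(\R^3)$ and satisfies $\Delta V = u \equiv 1$ pointwise in $\Omega$. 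Since $\partial_\nu V|_\Gamma$ coincides pointwise with the integrand in \eqref{eq:91}, the divergence theorem yields
\begin{equation*}
\frac{1}{8\pi}\int_\Gamma\int_\Gamma \frac{\nu(x)\cdot(x-y)}{|x-y|}\phi(y)\,d\sigma(y)d\sigma(x) = \int_\Gamma \partial_\nu V\,d\sigma = \int_\Omega \Delta V\,dx = |\Omega|.
\end{equation*}
Identity \eqref{eq:92} then follows immediately by relabeling the dummy variables $x\leftrightarrow y$ in the double integral: the integrand becomes $\frac{\nu(x)\cdot(y-x)}{|y-x|}\phi(y) = -\frac{\nu(x)\cdot(x-y)}{|x-y|}\phi(y)$, so \eqref{eq:92} equals $-|\Omega|$.

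Finally, for \eqref{eq:93} (reading $\nu(x)(x-y)$ as the Euclidean dot product to match the scalar right-hand side), I would decompose $\nu(x)\cdot(x-y) = \nu(x)\cdot x - y\cdot\nu(x)$ and apply the divergence theorem componentwise: $\int_\Gamma \nu(x)\cdot x\,d\sigma(x) = \int_\Omega \nabla\cdot x\,dx = 3|\Omega|$, while $\int_\Gamma \nu(x)\,d\sigma(x) = 0$. The inner $x$-integral is therefore equal to $3|\Omega|$ independently of $y$, so the double integral reduces to $3|\Omega|\int_\Gamma \phi\,d\sigma = 3\mathcal{C}_\Omega|\Omega|$. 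The only delicate point is the regularity of $V$ and the sense of the boundary pairings when $\phi$ is only a distribution in $H^{-1/2}(\Gamma)$; this is handled by the $C^2$-smoothness of $\Gamma$ together with the fact that the biharmonic kernel $|x-y|$ gains two orders of regularity over the Newtonian kernel $1/|x-y|$, placing $V$ in $H^2_{\textrm{loc}}(\R^3)$ and making the divergence theorem applicable.
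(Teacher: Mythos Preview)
Your proposal is correct and takes essentially the same approach as the paper. For \eqref{eq:91} the paper applies the divergence theorem in $x$ first (for fixed $y$) to get $\int_\Gamma \partial_{\nu_x}|x-y|\,d\sigma(x)=\int_\Omega 2/|x-y|\,dx$, and then integrates in $y$ using $SL_0 S_0^{-1}1\equiv 1$ in $\Omega$; your argument simply reverses the order, packaging the $y$-integral first into the auxiliary potential $V$ with $\Delta V=1$ and then applying the divergence theorem in $x$---the two computations are identical up to Fubini, and your treatments of \eqref{eq:92} and \eqref{eq:93} coincide with the paper's.
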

\begin{proof}
Firstly, we prove that \eqref{eq:91} holds. By Green formulas, we have
\begin{align*}
\int_{\Gamma} \frac{\nu(x)\cdot (x-y)}{|x-y|}d\sigma(x) =  \int_{\Omega} \Delta |x-y|dx = \int_{\Omega} \frac{2}{|x-y|} dx.
\end{align*}
From this, we get
\begin{align*}
\frac{1}{8\pi}\int_{\Gamma} \int_{\Gamma} \frac{\nu(x)\cdot (x-y)}{|x-y|} \left(S^{-1}_01\right)(y) d\sigma(y)d\sigma(x)  = \int_{\Omega}  \int_{\Gamma} \frac{1}{4\pi|x-y|} \left(S^{-1}_01\right)(y) d\sigma(y)dx,
\end{align*}
whence \eqref{eq:92} follows by the fact that $SL_0 S_0^{-1}$ solves the Laplace equation with the Dirichlet boundary condition of being equal to $1$ on the boundary $\Gamma$.

Secondly, proceeding as in the derivation of \eqref{eq:91}, we can get \eqref{eq:92}.

Thirdly, by employing the identities  
\begin{align*}
\int_{\Gamma} \nu(x) \cdot x d\sigma(x) = \int_{\Omega} \nabla \cdot x dx = 3|\Omega|,  \quad \int_{\Gamma} \nu(x) \cdot 1 d\sigma(x) =0,
\end{align*}
we can directly obtain \eqref{eq:93} holds.

\end{proof}

\section{Global asymptotics of the acoustic field in both space and frequency} \label{section-3}
This section is devoted to proving Theorem \ref{th:1}. We begin with the following observation. Let $w^{in}_{\omega}(y):= u^{in}_{\omega}(y_0+\vep(y-y_0))$ and $w_{\omega,\vep}(y):= u_{\omega,\vep}(y_0+\vep(y-y_0))$ for $y\in \R^3$. Clearly, $w_{\omega}^{in}$ and $w_{\omega,\vep}$ solve 
\begin{align*}
\nabla \cdot \frac{1}{\rho_0} \nabla w_\omega^{in}  + \vep^2\omega^2 \frac{1}{k_0} w^{in}_\omega = 0 \quad \; \textrm{in}\; \R^3
\end{align*}
and 
\begin{align*}
&\nabla \cdot \frac{1}{\rho_\vep \circ \Phi_{\vep}} \nabla w_{\omega,\vep} + \vep^2\omega^2 \frac{1}{k_\vep \circ \Phi_{\vep}} w_{\omega,\vep}= 0  \quad \; \;\;\;\;\;\;\;\; \; \text{in}\; \mathbb R^3,\\
& w_{\omega,\vep} = w^{sc}_{\omega,\vep} + w^{in}_\omega \;\;\;\;\;\; \qquad \qquad \qquad \qquad \qquad \qquad \text{in}\; \mathbb R^3,\\
&\lim_{|x|\rightarrow +\infty}\left(\frac{x}{|x|}\cdot\nabla-i\frac{\vep\omega}{c_0}\right)w^{sc}_{\omega,\vep} = 0,
\end{align*}
respectively. Here, $\Phi_{\vep}$ is given by \eqref{eq:64}. Clearly, $w_{\omega,\vep} \in H^2_{-\alpha}(\R^3 \backslash \Gamma)$. Note that 
\begin{align}\label{eq:52}
u_{\omega,\vep} =w_{\omega,\vep} \circ \Phi_{1/\vep}.
\end{align}
Therefore, in order to investigate the asymptotic behaviors of the field $u_{\omega,\vep}$, it suffices to derive the asymptotic expansion of $w_{\omega,\vep}$. With the aid of integral equations \eqref{eq:17}, \eqref{eq:44} and \eqref{eq:49}, we can find $w_{\omega,\vep}$ that solves 
\begin{align}
& w_{\omega,\vep}(x)= w_\omega^{in}(x) +\left(\frac{1}{c^2_1}- \frac{1}{c_0^2}\right)\vep^2\omega^2\int_{\Omega} \frac{e^{i{\vep\omega}|x-y|/c_0}}{4\pi|x-y|}w_{\omega,\vep}(y)dy\notag\\
&\quad\quad \quad \;-\left(\frac{\rho_0}{\rho_1\vep^2}-1\right)\int_{\Gamma}\frac{e^{i{\vep\omega}{|x-y|}/c_0}}{4\pi|x-y|} \partial_\nu w_{\omega,\vep}(y)d\sigma(y), \quad x\in \R^3 \backslash \Gamma, \label{eq:62}
\end{align} 
where the value $w_{\omega,\vep}$ within $\Omega$ and the normal derivative $\partial_\nu w_{\omega, \vep}$ on $\Gamma$ are determined by
\begin{align}\label{eq:18}
\left(\mathbb I-\left(\frac{1}{c^2_1}- \frac{1}{c_0^2}\right)\vep^2\omega^2N_{\vep\omega/c_0}\right)w_{\omega,\vep}&= w_\omega^{in} -\left(\frac{\rho_0}{\rho_1\vep^2}-1\right)SL_{\vep\omega/c_0} \partial_\nu w_{\omega,\vep} \quad \textrm{in}\; \Omega
\end{align}
and 
\begin{align}
&\frac{\rho_0}{\rho_1\vep^2}\left(\frac 12\left(1 + \frac {\rho_1\vep^2} {\rho_0}\right) \mathbb I + \left(1-\frac{\rho_1\vep^2} {\rho_0}\right)K_{\vep\omega/c_0}^*\right) \partial_\nu w_{\omega,\vep}  \notag \\
&\qquad\qquad\qquad\qquad\qquad\qquad\qquad = \partial_\nu w_\omega^{in} + \left(\frac{1}{c^2_1}-\frac{1}{c^2_0}\right)\vep^2\omega^2\partial_\nu N_{\vep\omega/c_0} w_{\omega,\vep} \quad\textrm{on}\; \Gamma. \label{eq:16}
\end{align}
For every $z \in \overline{\CC_+} \backslash \{0\}$, define
\begin{align}
\Lambda^{(1)}_{z}:= \mathbb I-\left(\frac{1}{c^2_1}- \frac{1}{c_0^2}\right)z^2N_{z/c_0}, \quad \Lambda^{(2)}_{z,\vep}:=\frac 12\left(1 + \frac{\rho_1 \vep^2}{\rho_0}\right) \mathbb I + \left(1-\frac{\rho_1 \vep^2}{\rho_0}\right)K_{z/c_0}^*. \label{eq:23} 
\end{align}
Based on integral equations \eqref{eq:62}, \eqref{eq:18} and \eqref{eq:16}, obtaining asymptotic estimates of the inverse of the operators $\Lambda^{(1)}_{\vep\omega}$ and $\Lambda^{(2)}_{\vep\omega,\vep}$ as $\vep$ tends to $0$ plays an essential role in deriving asymptotic expansions of the field $w_{\omega,\vep}$. It is readily observed that $\Lambda^{(1)}_{\vep\omega}\approx \mathbb I$ when $\vep$ tends to $0$, leading to its inverse also approximately scaling as $\left(\Lambda^{(1)}_{\vep\omega}\right)^{-1}\approx \mathbb I$ for sufficiently small $\vep$. Similarly, $\Lambda^{(2)}_{\vep\omega,\vep}$ can be expected to approximate $1/2 \mathbb I + K^*_0 $ as $\vep$ approaches to $0$. However, $-1/2$ is the eigenvalue of the operator $K^*_0$, which poses challenges in estimating the inverse of  $\Lambda^{(2)}_{\vep\omega,\vep}$ for sufficiently small $\vep$. To overcome this difficulty, we utilize the spectral properties of  $K^*_0$.

Building on the preceding discussions, we introduce the spectral properties of the operator $K^*_0$ in the subsequent subsection before proceeding to prove Theorem \ref{th:1}.   
\subsection{\texorpdfstring {Spectral properties of $K^*_0$}{}}

We begin by outlining the following important spectral properties of $K^*_0$.

\begin{lemma}
$K_0^*$ is a compact operator of $H^{-1/2}(\Gamma)$ and $\lambda_0 = -1/2$ is a simple eigenvalue of the operator $K^*_0$ and the corresponding eigenvalue function is $\left(S^{-1}_01\right)(x)$.
\end{lemma}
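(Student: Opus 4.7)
The plan is to address the three assertions separately.

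For compactness, I would rely on the fact (already recorded in the operator inventory of Section~\ref{section-2}) that $K_0^{*}\in\mathcal L(H^{-\frac12}(\Gamma),H^{\frac12}(\Gamma))$ is bounded. Because $\Gamma$ is a $C^{2}$ compact manifold, the Rellich embedding $H^{\frac12}(\Gamma)\hookrightarrow H^{-\frac12}(\Gamma)$ is compact, so the composition $K_0^{*}:H^{-\frac12}(\Gamma)\to H^{-\frac12}(\Gamma)$ is a compact operator.

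Next I would identify $S_0^{-1}1$ as an eigenfunction. Set $\phi:=S_0^{-1}1\in H^{-\frac12}(\Gamma)$ and consider $u:=SL_{0}\phi$ viewed as a function on $\overline\Omega$. Then $u$ is harmonic in $\Omega$ with Dirichlet trace $\gamma u=S_0\phi=1$; uniqueness of the interior Dirichlet problem for the Laplacian forces $u\equiv 1$ in $\overline\Omega$, and hence $\partial_\nu^{-}u=0$ on $\Gamma$. Combined with the interior jump relation for the normal derivative of the single layer, $\partial_\nu^{-}SL_0\phi=\bigl(\tfrac12\mathbb I+K_0^{*}\bigr)\phi$ (under the outward-normal convention already in force in the derivation of \eqref{eq:49}), this yields $\bigl(\tfrac12\mathbb I+K_0^{*}\bigr)\phi=0$, i.e. $K_0^{*}\phi=-\tfrac12\phi$.

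For simplicity, I would argue by duality that
\begin{equation*}
\dim\ker\bigl(K_0^{*}+\tfrac12\mathbb I\bigr)=\dim\ker\bigl(K_0+\tfrac12\mathbb I\bigr)
\end{equation*}
via the standard $H^{-\frac12}(\Gamma)$--$H^{\frac12}(\Gamma)$ pairing under which $K_0$ and $K_0^{*}$ are adjoints. It thus suffices to show that $\ker(K_0+\tfrac12\mathbb I)$ is spanned by the constant function $1$. Given $v\in\ker(K_0+\tfrac12\mathbb I)$, let $w$ be the harmonic double layer potential with density $v$. The exterior jump relation gives $w\vert_{+}=\bigl(\tfrac12\mathbb I+K_0\bigr)v=0$; harmonicity and decay of $w$ in $\R^{3}\setminus\overline\Omega$, together with uniqueness of the exterior Dirichlet problem, force $w\equiv 0$ outside. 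Continuity of the normal derivative of the double layer across $\Gamma$ then yields $\partial_\nu^{-}w=\partial_\nu^{+}w=0$, so $w$ is harmonic in $\Omega$ with vanishing Neumann trace, hence a constant $c$. The interior jump relation $w\vert_{-}=\bigl(-\tfrac12\mathbb I+K_0\bigr)v=-v$ finally shows $v\equiv -c$ on $\Gamma$, confirming one-dimensionality.

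The main obstacle I anticipate is the simplicity step: the eigenfunction identification collapses to a single line once the sign convention for $K_0^{*}$ is pinned down, but showing that $\ker(K_0+\tfrac12\mathbb I)$ is exactly one-dimensional requires a careful interplay between the interior and exterior jump relations, continuity of the double-layer normal derivative, uniqueness of both the exterior Dirichlet and interior Neumann problems in $\R^{3}$, and the identification of $K_0^{*}$ as the adjoint of $K_0$ on the fractional Sobolev scale $H^{\pm\frac12}(\Gamma)$.
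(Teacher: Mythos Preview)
The paper does not supply a proof of this lemma; it is stated as a known spectral fact about the Neumann--Poincar\'e operator and then used to build the projection $\mathcal P$. Your plan is the standard argument (compactness via the smoothing property $K_0^{*}:H^{-1/2}\to H^{1/2}$ plus Rellich, eigenfunction identification via the interior jump relation for $SL_0$, and one-dimensionality of the kernel via the double-layer/exterior-Dirichlet argument on the dual side), and it is correct as written. One small point worth tightening: ``simple'' in this paper really means that the eigenspace is one-dimensional, which is exactly what your kernel computation delivers; if you wanted algebraic simplicity as well you could invoke the Calder\'on identity $K_0^{*}S_0=S_0K_0$, which makes $K_0^{*}$ symmetric in the $\langle\cdot,\cdot\rangle_{S_0}$ inner product \eqref{eq:83} that the paper itself introduces, so geometric and algebraic multiplicities coincide.
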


For any $\phi \in H^{-1/2}(\Gamma)$, we define
\begin{align}\label{eq:71}
\left(\mathcal P \phi\right)(x):= \mathcal C^{-1}_\Omega\int_{\Gamma} \left(S_0 \phi\right)(y) \left(S^{-1}_01\right)(y) d\sigma(y) (S^{-1}_01)(x), \quad x\in \Gamma.
\end{align}
Clearly, the operator $\mathcal P$ projects $\phi$ onto the eigenspace of the operator $K^*_0$ corresponding to the eigenvalue $-1/2$, which is spanned by $\left(S^{-1}_01\right)(x)$ and is denoted by $\textrm{Span}\{S^{-1}_01\}$. Define a novel scalar product
\begin{align}\label{eq:83}
\langle \phi, \psi \rangle_{S_0}:=\mathcal C^{-1}_\Omega\int_{\Gamma} \left(S_0 \phi\right)(y)\psi(y)d\sigma(y), \quad \phi,\psi\in H^{-\frac12}(\Gamma).
\end{align}
This scalar product $\langle \cdot, \cdot \rangle_{S_0}$ is well defined since $S_0 \in \mathcal L(H^{-1/2}(\Gamma), H^{1/2}(\Gamma))$. The constant $\mathcal C^{-1}_\Omega$, as specified in \eqref{eq:74}, ensures that $\langle S^{-1}_01,  S^{-1}_01\rangle_{S_0} = 1$. By \eqref{eq:71}, we readily find
\begin{align} 
\mathcal P \phi = \langle \phi, S^{-1}_01\rangle_{S_0} S^{-1}_01, \quad \phi -\mathcal P \phi\in \textrm{Span}\{S^{-1}_01\}^{\perp}. \label{eq:140}
\end{align}
Here, $\textrm{Span}\{ S^{-1}_01\}^{\perp}:=\{\phi\in H^{-1/2}(\Gamma):\langle \phi,S^{-1}_01\rangle_{S_0} =0\}$. By the definition of $\textrm{Span}\{ S^{-1}_01\}^{\perp}$, it is readily deduced that
\begin{align} \label{eq:77}
\left(\frac 12 \mathbb I+K^*_0\right)^{-1} \in \mathcal L\left(\textrm{Span}\{ S^{-1}_01\}^{\perp}\right).
\end{align}
Further, we note that every $\phi \in H^{-1/2}(\Gamma)$ can be decomposed into 
\begin{align*}
\phi = \mathcal P \phi + \left(\mathbb I-\mathcal P\right)\phi =: a_{\phi} S^{-1}_01 + \phi_r.
\end{align*}
Here, $a_\phi \in \mathbb C$ denotes the projection coefficient preceding the eigenvector $S^{-1}_01$ and $\phi_r$ belongs to $\textrm{Span}\{ S^{-1}_01\}^{\perp}$. Based on this, every operator $H\in \mathcal L(H^{-1/2}(\Gamma))$ can be represented by
\begin{align}\label{eq:72}
\left(H \phi\right) (x)  = a^H_{\phi} \left(S^{-1}_01\right)(x) + \phi^H_r(x), \quad  x\in \Gamma,
\end{align}
where $a_\phi$, $\phi_r$, $a^H_{\phi}$ and $\phi^H_r$ satisfy
\begin{align} \label{eq:73}
\begin{pmatrix}
a^H_{\phi}\\
\phi^H_r
\end{pmatrix} =  \begin{bmatrix}
    H_{00} & H_{01}\\
    H_{10} & H_{11}
\end{bmatrix}
\begin{pmatrix}
a_{\phi}\\
\phi_r
\end{pmatrix}.
\end{align}
Here, $H_{00}$ is a complex number, $H_{01} \in \mathcal L (\textrm{Span}\{ S^{-1}_01\}^{\perp}, \CC)$, $H_{10} \in \mathcal L(\CC,\textrm{Span}\{ S^{-1}_01\}^{\perp})$, and $H_{11} \in \mathcal L(\textrm{Span}\{ S^{-1}_01\}^{\perp})$. The upcoming theorem will provide a characterization of the inverse for a class of operators based on the above representation \eqref{eq:72}. 

\begin{lemma}\label{le:3}
Let $H \in \mathcal L(H^{-1/2}(\Gamma))$ be defined as in \eqref{eq:72}, with $a^H_\phi$ and $\phi^H_r$ are determined by \eqref{eq:73}.
Suppose that $H_{11}$ has a bounded inverse $H^{-1}_{11}\in\mathcal L(\textrm{Span}\{ S^{-1}_01\}^{\perp})$, and that $H_{00}-H_{01}H^{-1}_{11}H_{10}{1} \ne 0$. Then $H^{-1}$ exists, represented by
\begin{align}\label{eq:57}
H^{-1}\phi = \frac{a_\phi - H_{01}H^{-1}_{11}\phi_r}{H_{00}-H_{01}H^{-1}_{11}H_{10}1} S^{-1}_01+ \left[-\frac{a_\phi - H_{01}H^{-1}_{11}\phi_r}{H_{00}-H_{01}H^{-1}_{11}H_{10}1}H_{11}^{-1}H_{10}{1} + H^{-1}_{11}\phi_r\right].
\end{align}
\end{lemma}

\begin{proof}
Given $\phi\in H^{-1/2}(\Gamma)$, we aim to find the solution of 
\begin{align*}
H f = a^H_{f} S^{-1}_01 + f^H_r =(H_{00} a_f + H_{01}f_r) S^{-1}_01 + H_{10} a_f + H_{11}f_r =\phi = a_{\phi} S^{-1}_01 + \phi_r.
\end{align*}
This is equivalent to solve
\begin{align*}
H_{00} a_f + H_{01}f_r = a_\phi, \\
H_{10} a_f + H_{11}f_r = \phi_r.
\end{align*}
Based on the assumptions that $H_{00}-H_{01}H^{-1}_{11}H_{10}{1} \ne 0$ and $H_{11}$ has a bounded inverse $H^{-1}_{11}\in\mathcal L(\textrm{Span}\{ S^{-1}_01\}^{\perp})$, a straightforward calculation gives 
\begin{align*}
a_f = \frac{a_\phi - H_{01}H^{-1}_{11}\phi_r}{H_{00}-H_{01}H^{-1}_{11}H_{10}1}, \quad f_r = -\frac{a_\phi - H_{01}H^{-1}_{11}\phi_r}{H_{00}-H_{01}H^{-1}_{11}H_{10}1}H_{11}^{-1}H_{10}{1} + H^{-1}_{11}\phi_r.
\end{align*}
Therefore, we conclude that $H^{-1}$ exists and is given explicitly by \eqref{eq:57}.
\end{proof}

Utilizing the representation \eqref{eq:72} offers the advantage of estimating the inverse of operators. Notably, the Born series inversion method, widely utilized for estimating inverses of operators as in \cite{MPS}, requires that the operator can be expressed as a sum of the identity operator and another operator with a norm less than $1$. In contrast, our novel representation simplifies the task, only requiring the estimation of the inverse of the projection coefficient associated with the eigenvector, thereby bypassing the stringent assumptions required by the Born series technique. Employing this approach to estimate the inverse of the operator class $\Lambda^{(2)}_{\vep z,\vep} +\vep^2\beta \mathcal P $ brings us to the following lemma, where $\beta \in \R \backslash \R_-$, $\Lambda^{(2)}_{\vep z,\vep}$ and $\mathcal P$ are specified in \eqref{eq:23} and  \eqref{eq:71}, respectively.

\begin{lemma} \label{le:2}
Let $\vep>0$ and $\beta \in \R \backslash \R_-$. Assume that $V$ be a bounded closed set of $\overline {\CC_+} \backslash \{0\}$. There exists $\delta_V \in \R_+$ such that for any $\phi\in H^{-1/2}(\Gamma)$, we have 
\begin{align}\label{eq:67}
\vep^2\left(\left(\Lambda^{(2)}_{\vep z,\vep}+ \vep^2 \beta \mathcal P \right)^{-1} \phi\right) (x) = \frac{\langle \phi,  S^{-1}_01\rangle_{S_0}}{\frac{\rho_1}{\rho_0} + \beta -\frac{z^2 |\Omega|}{\mathcal C_\Omega c^2_0}-i\frac{z^3|\Omega|}{4\pi c^3_0}\vep}\left(S^{-1}_01\right)(x) + (r_{Res}\phi)(x), \;\; x\in \Gamma,
\end{align}
where 
\begin{align}\label{eq:68}
\|r_{Res}(\phi)\|_{{H^{-\frac 12}(\Gamma)}} \le C_{d_{V,\max}}\frac{\vep^2 \left|\langle \phi, S^{-1}_01\rangle_{S_0}\right| + \vep^2\|\phi - \mathcal P \phi\|_{H^{-\frac12}(\Gamma)} }{\left|\frac{\rho_1}{\rho_0} + \beta -\frac{z^2 |\Omega|}{\mathcal C_\Omega c^2_0}-i\frac{z^3|\Omega|}{4\pi c^3_0}\vep\right|}  
\end{align}
holds uniformly with respect to all $z \in V$ and all $\vep \in (0,\delta_V)$. Here, $d_{V,\max}:=\max_{z\in V} |z|$ and the positive constant $C_{ d_{V,\max}}$ is independent of $\vep$ and $z$.
\end{lemma}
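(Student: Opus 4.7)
The plan is to exploit the spectral splitting $H^{-1/2}(\Gamma) = \mathrm{Span}\{S_0^{-1}1\} \oplus \mathrm{Span}\{S_0^{-1}1\}^{\perp}$ induced by the inner product $\langle\cdot,\cdot\rangle_{S_0}$, write $H := \Lambda^{(2)}_{\vep z,\vep} + \vep^2\beta\mathcal{P}$ in the block form \eqref{eq:72}--\eqref{eq:73}, and apply the block-inversion formula of Lemma \ref{le:3}. Expanding $K^*_{\vep z/c_0}$ via Lemma \ref{le:5}\eqref{b2}, and noting that $K^{*,(1)}\equiv 0$ (the prefactor $j-1$ kills the $j=1$ term), I would combine the eigenrelation $K^*_0(S_0^{-1}1) = -\tfrac12 S_0^{-1}1$ with the $\langle\cdot,\cdot\rangle_{S_0}$-self-adjointness of $K^*_0$ (which follows from the Calder\'on relation $S_0 K^*_0 = K_0 S_0$ together with the $L^2$-self-adjointness of $S_0$) to conclude that $H_{01}$ and $H_{10}$ are both of size $O(\vep^2)$, while $H_{11}$ is a uniform $O(\vep)$ perturbation of $\left(\tfrac12\mathbb{I} + K^*_0\right)|_{\mathrm{Span}\{S_0^{-1}1\}^{\perp}}$. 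By \eqref{eq:77} the latter is invertible, hence a Neumann series produces a uniformly bounded $H_{11}^{-1}\in\mathcal{L}\left(\mathrm{Span}\{S_0^{-1}1\}^{\perp}\right)$ for $z\in V$ and $\vep$ sufficiently small.

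The decisive step is the explicit expansion of $H_{00} = \langle H(S_0^{-1}1), S_0^{-1}1\rangle_{S_0}$. Using the identity $\langle \psi, S_0^{-1}1\rangle_{S_0} = \mathcal{C}_\Omega^{-1}\int_\Gamma\psi\,d\sigma$ (a direct consequence of $S_0(S_0^{-1}1) = 1$ and the $L^2$-self-adjointness of $S_0$), the $\langle\cdot,\cdot\rangle_{S_0}$-pairings of $K^{*,(2)}(S_0^{-1}1)$ and $K^{*,(3)}(S_0^{-1}1)$ against $S_0^{-1}1$ collapse to the double integrals evaluated in Lemma \ref{le:6}: formulas \eqref{eq:91} and \eqref{eq:93} yield $-|\Omega|/\mathcal{C}_\Omega$ and $-i|\Omega|/(4\pi)$ respectively. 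Combined with the identity-term contribution $\vep^2\rho_1/\rho_0$ and the $\vep^2\beta$ coming from $\vep^2\beta\mathcal{P}$, this gives
\begin{align*}
H_{00} = \vep^2\Big[\tfrac{\rho_1}{\rho_0} + \beta - \tfrac{z^2|\Omega|}{\mathcal{C}_\Omega c_0^2} - i\vep\tfrac{z^3|\Omega|}{4\pi c_0^3}\Big] + O(\vep^4),
\end{align*}
uniformly in $z\in V$, which is precisely $\vep^2$ times the denominator advertised in \eqref{eq:67}.

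Applying Lemma \ref{le:3} next, the Schur-type correction $H_{01}H_{11}^{-1}H_{10}(1) = O(\vep^2)\cdot O(1)\cdot O(\vep^2) = O(\vep^4)$ perturbs the denominator only at order $\vep^4$, while Lemma \ref{le:1} applied with $\mathcal{C}_1 = \rho_1/\rho_0 + \beta > 0$ (positive by the hypothesis $\beta\in\R\setminus\R_-$) and an obvious rescaling of $z$ furnishes a uniform lower bound of order $\vep$ on the bracketed quantity. Reading off the leading coefficient of $\vep^2 H^{-1}\phi$ via Lemma \ref{le:3} then reproduces the principal term of \eqref{eq:67}. The three residual contributions---the $O(\vep^4)$ denominator correction (producing a relative error of order $\vep$ on the $|a_\phi|$-term), the numerator term $H_{01}H_{11}^{-1}\phi_r = O(\vep^2)\|\phi_r\|_{H^{-1/2}(\Gamma)}$, and the component $\vep^2(-a_f H_{11}^{-1}H_{10}(1) + H_{11}^{-1}\phi_r)$ of $\vep^2 f$ lying in $\mathrm{Span}\{S_0^{-1}1\}^{\perp}$---assemble term by term into precisely the right-hand side of \eqref{eq:68}.

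The principal obstacle I anticipate is making the expansion genuinely uniform as $z$ approaches the real axis inside $V$: there $|H_{00}|$ is only of order $\vep^3$ rather than $\vep^2$, so the $O(\vep^4)$ corrections to both $H_{00}$ and $H_{01}H_{11}^{-1}H_{10}(1)$ must be shown to contribute a relative error of at most $\vep$ and not worse. Lemma \ref{le:1} is tailored exactly to this near-resonance regime and is what converts the quartic-order error into the linear-in-$\vep$ relative error appearing in \eqref{eq:68}; the subsidiary requirement of uniform invertibility of $H_{11}$ poses no difficulty, as it reduces to the operator-norm convergence of the series in Lemma \ref{le:5}\eqref{b2} over the bounded set $V$.
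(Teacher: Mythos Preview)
Your proposal is correct and follows essentially the same approach as the paper: block decomposition of $\Lambda^{(2)}_{\vep z,\vep}+\vep^2\beta\mathcal P$ via $\mathcal P$ and $\mathcal Q=\mathbb I-\mathcal P$, expansion of $K^*_{\vep z/c_0}$ through Lemma~\ref{le:5}\eqref{b2}, explicit evaluation of the scalar block $M_{00}$ via Lemma~\ref{le:6}, inversion by Lemma~\ref{le:3}, and near-resonance denominator control by Lemma~\ref{le:1}. The only minor discrepancy is that $H_{11}$ is in fact an $O(\vep^2)$ perturbation of $\mathcal Q(\tfrac12+K^*_0)\mathcal Q$ (since $K^{*,(1)}\equiv 0$ and the identity contribution is $\vep^2\rho_1/\rho_0$), not merely $O(\vep)$ as you wrote, but this only strengthens your Neumann-series step and is inconsequential for the argument.
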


\begin{proof}
Assume that $\vep <1$ throughout the proof. Define $\mathcal Q\phi= \phi-\mathcal P\phi$ for $\phi \in H^{-1/2}(\Gamma)$. It follows from statement \eqref{b2} of Lemma \ref{le:5} that 
\begin{align*}
\Lambda^{(2)}_{\vep z,\vep} & = \frac{\rho_1\vep^2}{\rho_0} \mathbb I + \left(1-\frac{\rho_1\vep^2}{\rho_0}\right)\left(\frac{1}2 \mathbb I +K^*_{\vep z/c_0}\right)\\
&=\frac{\rho_1\vep^2}{\rho_0} \mathbb I + \left(1-\frac{\rho_1\vep^2 }{\rho_0}\right)\left(\frac 12 \mathbb I+ K^*_0 + \frac{\vep^2 z^2}{ c^2_0}K^{*,(2)} + \frac{\vep^3 z^3}{c^3_0} K^{*, (3)} + \mathcal R_\Lambda\right),
\end{align*}
where $\|R_\Lambda\|_{H^{-1/2}(\Gamma),H^{-1/2}(\Gamma)} \le C\vep^4 |z|^4$. Clearly,
\begin{align*}
\|\mathcal A R_\Lambda (\mathbb I-\mathcal A)\|_{H^{-1/2}(\Gamma),H^{-1/2}(\Gamma)}\le C_{d_{V,max}}\vep^4|z|^2, \quad \mathcal A = \mathcal P, \mathcal Q.
\end{align*}
This, together with the identities $(\mathcal P +\mathcal Q) = \mathbb I$, $(1/2 \mathbb I+K^*_0)\mathcal P = 0$ and $\mathcal P (1/2 \mathbb I+K^*_0)\mathcal Q = 0$ yields that 
\begin{align*}
&\left(\Lambda^{(2)}_{\vep z,\vep} + \vep^2\beta \mathcal P\right) \phi = (\mathcal P +\mathcal Q) \left(\Lambda^{(2)}_{\vep z,\vep} + \vep^2\beta \mathcal P \right) (\mathcal P +\mathcal Q)\phi = (\mathcal P +\mathcal Q) \bigg[\frac{\rho_1\vep^2}{\rho_0} \mathbb I + \bigg(1-\frac{\rho_1\vep^2 }{\rho_0}\bigg)\\
&\left(\frac 12 \mathbb I+ K^*_0 + \frac{\vep^2 z^2}{ c^2_0}K^{*,(2)} + \frac{\vep^3 z^3}{c^3_0} K^{*, (3)}  \right) + \vep^2\beta \mathcal P + \left(1-\frac{\rho_1\vep^2 }{\rho_0}\right)R_{\Lambda}\bigg](\mathcal P +\mathcal Q)\phi\\
&= \left[M_{00}a_\phi + M_{01}\phi_r\right]S^{-1}_01+ \left[M_{10} a_\phi+ M_{11} \phi_r\right],
\end{align*}
for every $\phi= a_\phi S^{-1}_01+ \phi_r \in H^{-1/2}(\Gamma)$ with $a_\phi = \langle \phi, S^{-1}_01\rangle_{S_0}$ and $\phi_r \in \textrm{Span}\{S^{-1}_01\}^{\perp}$, where $M_{00}$, $M_{01}$, $M_{10}$ and $M_{11}$ satisfy
\begin{align}
&M_{00} \in \CC, \quad \left|M_{00}-\frac{\rho_1\vep^2}{\rho_0}-\vep^2\beta-\frac{\vep^2z^2}{c^2_0} \langle K^{*,(2)} S^{-1}_01, S^{-1}_01\rangle_{S_0} - \frac{\vep^3 z^3}{c^3_0}\langle K^{*,(3)}S^{-1}_01, S^{-1}_01\rangle_{S_0}\right|\notag\\ 
&\qquad \qquad \qquad \qquad \qquad \qquad \qquad \qquad \qquad \qquad \qquad \qquad \qquad \qquad \quad\le C_{d_{V,max}}\vep^4 |z|^2, \label{eq:20}\\
&M_{01}\in \mathcal L (\textrm{Span}\{S^{-1}_01\}^{\perp}, \CC),\quad \left\|M_{01}\right\|_{H^{-\frac12}(\Gamma), \CC} \le C_{d_{V,max}}\vep^2|z|^2,  \label{eq:21}
\end{align}
\begin{align}
&M_{10}\in \mathcal L (\CC, \textrm{Span}\{ S^{-1}_01\}^{\perp}), \quad \left\|M_{10}\right\|_{\CC,H^{-\frac12}(\Gamma)} \le C_{d_{V,max}}\vep^2|z|^2, \label{eq:65} \\
&M_{11}\in \mathcal L (\textrm{Span}\{ S^{-1}_01\}^{\perp}), \quad \left\|M_{11} - Q_0\left(\frac 12 \mathbb I+ K^*_0\right)Q_0\right\|_{H^{-\frac12}(\Gamma),H^{-\frac12}(\Gamma)}\notag\\
&\qquad \qquad \qquad \qquad \qquad \qquad \qquad \qquad \qquad \qquad \qquad \qquad\qquad\quad\le C_{d_{V,max}}(1+|z|^2)\vep^2.  \label{eq:19}
\end{align}
Furthermore, by Lemma \ref{le:6}, we have 
\begin{align*}
\frac{\rho_1\vep^2}{\rho_0} + \beta \vep^2& + \frac{\vep^2 z^2}{c^2_0} \langle K^{*,(2)} S^{-1}_01, S^{-1}_01\rangle_{S_0} + \frac{\vep^3 z^3}{c^3_0}\langle K^{*,(3)}S^{-1}_01, S^{-1}_01\rangle_{S_0} \\
&= \vep^2\left(\frac{\rho_1}{\rho_0} + \beta - \frac{z^2}{\mathcal C_\Omega c_0^2}|\Omega|-\frac{i\vep z^3 \mathcal|\Omega|}{4\pi c^3_0}\right).
\end{align*}
From this, we can employ \eqref{eq:20}, \eqref{eq:21} and \eqref{eq:65} to get that there exists $\delta_V^{(1)} \in \R_+$ such that 
\begin{align}
&M_{00}- M_{01}M^{-1}_{11}M_{10}1 \ne 0, \quad \mathrm{and}\notag\\
&\left|M_{00}- M_{01}M^{-1}_{11}M_{10}1 - \vep^2\left(\frac{\rho_1}{\rho_0} + \beta - \frac{z^2}{\mathcal C_\Omega c_0^2}|\Omega|-\frac{i\vep z^3 \mathcal|\Omega|}{4\pi c^3_0}\right) \right| \le C_{d_{V,\max}}\vep^4|z|^2 \label{eq:56}
\end{align}
for all $\vep \in \left(0, \delta_V^{(1)}\right)$. Moreover, we can deduce from \eqref{eq:77} and \eqref{eq:19} that there exists $\delta_V^{(2)} \in \R_+$ such that when $\vep \in \left(0, \delta^{(2)}_V\right)$, $M_{11}$ has an inverse $M^{-1}_{11}\in\mathcal L(\textrm{Span}\{ S^{-1}_01\}^{\perp})$ and 
\begin{align} \label{eq:55}
\|M^{-1}_{11}\|_{\mathcal L(\textrm{Span}\{ S^{-1}_01\}^{\perp})} \le C_{d_{V,\max}}.
\end{align}
Based on the above discussions, we can utilize Lemma \ref{le:3} to get 
\begin{align}
\left(\Lambda^{(2)}_{\vep z,\vep} + \vep^2 \beta \mathcal P \right)^{-1}\phi & = \frac{a_\phi- M_{01}M_{11}^{-1}\phi_r}{M_{00}-M_{01}M^{-1}_{11}M_{10}1}\notag\\
&+\left[-\left(\frac{a_\phi-M_{01}M_{11}^{-1}\phi_r}{M_{00} -M_{01}M^{-1}_{11}M_{10}1}M_{11}^{-1}M_{10}{1}\right) + M_{11}^{-1}\phi_r\right].\label{eq:66}
\end{align}
We set $\delta_V:=\min\left(1,{2\pi c^2_0/\left(d_{V,\max}|\Omega|\right)}, {1/d_{V,\min}}, \delta_V^{(1)}, \delta_V^{(2)}\right)$, where $d_{V,\min}:= \min_{z\in V} |z|$. It follows from \eqref{eq:56} and Lemma \ref{le:1} that 
\begin{align*}
\frac{\vep^2}{M_{00} -M_{01}M^{-1}_{11}M_{10}1} - \frac{1}{\left|\frac{\rho_1}{\rho_0} + \beta -\frac{z^2 |\Omega|}{\mathcal C_\Omega c^2_0}-i\frac{z^3|\Omega|}{4\pi c^3_0}\vep\right|} \le \frac{C_{d_{V,\max}}\vep}{\left|\frac{\rho_1}{\rho_0} + \beta -\frac{z^2 |\Omega|}{\mathcal C_\Omega c^2_0}-i\frac{z^3|\Omega|}{4\pi c^3_0}\vep\right|}.
\end{align*}
This, together with \eqref{eq:21}, \eqref{eq:65}, \eqref{eq:19}, \eqref{eq:55}, \eqref{eq:66} and the fact that $a_\phi = \langle \phi, S^{-1}_01\rangle_{S_0}$ shows that the operator $\vep^2 \left(\Lambda^{(2)}_{\vep z,\vep} + \vep^2 \beta \mathcal P\right)^{-1}$ has the asymptotic expansion \eqref{eq:67} with the remainder term $r_{Res}(\phi)$ satisfying \eqref{eq:68} for all $\vep \in (0,\delta_V)$. The proof of this lemma is thus completed.

\end{proof}

Now we are in a position to give the proof of Theorem \ref{th:1}. We begin by proving Theorem \ref{th:1} for the simpler case when $c_1 = c_0$ in section \ref{sec:3.1}, which will provide a clear understanding to the main idea of the proof. Subsequently, building upon the approach used to prove the case $c_1=c_0$, we will extend our proof to the more general case when $c_1 \ne c_0$ in section \ref{sec:3.2}. 

\subsection{\texorpdfstring {Proof of Theorem \ref{th:1} for the case $c_1 = c_0$}{}} \label{sec:3.1}

\begin{proof}[Proof of Theorem \ref{th:1} for the case $c_1 = c_0$]
Let $\vep>0$ be sufficiently small throughout the proof. As $c_1 = c_0$, it is easily seen from \eqref{eq:52} and \eqref{eq:62} that
\begin{align}\label{eq:7}
u_{\omega,\vep} = u_{\omega}^{in} - \left(\frac{\rho_0}{\rho_1\vep^2} - 1\right) \left(\Phi_{1/\vep} \circ SL_{\vep\omega/c_0}\right) \partial_\nu w_{\omega,\vep}, \quad \textrm{in}\; \R^3 \backslash \Gamma.
\end{align}
First, we focus on the estimate of $\partial_\nu w_{\omega,\vep}$. By \eqref{eq:16}, we deduce
\begin{align} \label{eq:95}
\partial_\nu w_{\omega,\vep}  = \frac{\rho_1\vep^2}{\rho_0} \left(\Lambda_{\vep\omega,\vep}^{(2)}\right)^{-1}\partial_\nu w_\omega^{in} \quad \textrm{on}\; \Gamma.
\end{align}
It should be noted that, according to Lemma \ref{le:2}, the inverse of $\Lambda_{\vep\omega,\vep}^{(2)}$ exists. For the estimate of $\partial_\nu w_\omega^{in}$ on $\Gamma$, given that the field $w_\omega^{in}$ solves the Helmholtz equation with the wave number $\vep^2\omega^2/c^2_0$ in $\Omega$, and given that $\vep$ is small enough such that $\vep^2\omega^2/c^2_0$ is not a Dirichlet eigenvalue of $-\Delta$ in $\Omega$, we can deduce from \eqref{eq:51} that
\begin{align} \label{eq:22}
\partial_\nu w_\omega^{in} =  S_{\vep\omega/c_0}^{-1}\left (\frac 12 \mathbb I+ K_{\vep\omega/c_0}\right) \gamma w^{in}_\omega \quad \textrm{on}\; \Gamma.
\end{align}
With the aid of statement \eqref{a1} of Lemma \ref{le:7}, we have
\begin{align} \label{eq:85} 
& \|\gamma\left(w^{in}_{\omega} - u_{\omega}^{in}(y_0)\right)\|_{H^{\frac 12}(\Gamma)} \le C\vep^{\frac 12}\|u_{\omega}^{in}\|_{H^2(B_1(y_0))}.
\end{align}
This, together with \eqref{eq:122}, \eqref{eq:22} and Lemma \ref{le:8} gives 
\begin{align}\label{eq:70}
\left|\langle \partial_\nu w_\omega^{in}, S^{-1}_01 \rangle_{S_0} - \frac{\vep^2 w^2}{c^{2}_0} \langle S_0^{-1} K^{(2)}1, S^{-1}_01\rangle_{S_0} u_{\omega}^{in}(y_0)\right| \le C_{d_{I,\max}}\vep^{\frac 52}.
\end{align}
Furthermore, using \eqref{eq:78}, \eqref{eq:79} and the fact that $(1/2 \mathbb I + K_0) 1 = 0$, we have
\begin{align}\label{eq:112}
\left\| S_{\vep\omega/c_0}^{-1}\left (\frac 12 \mathbb I+ K_{\vep\omega/c_0}\right) 1 \right\|_{H^{-\frac 12}(\Gamma)} \le C_{d_{I,\max}}\vep^2.
\end{align}
Combining \eqref{eq:85}, \eqref{eq:70}, \eqref{eq:112}, Lemma \ref{le:6} and Lemma \ref{le:2} gives 
\begin{align*}
\vep^2\partial_\nu w_{\omega,\vep} = \frac{\rho_1\vep^2}{\rho_0} \left[\frac{-{\vep^2\omega^2}{c_0^{-2}}|\Omega|\mathcal C^{-1}_\Omega u_\omega^{in}(y_0)}{\frac{\rho_1}{\rho_0}-\frac{\omega^2 |\Omega|}{\mathcal C_\Omega c^2_0}-i\frac{\omega^3|\Omega|}{4\pi c^3_0}\vep} S^{-1}_01  + Res\right]\;\; \textrm{on}\; \Gamma,
\end{align*}
where $Res$ satisfies
\begin{align*}
\left\|Res\right\|_{{H^{-\frac 12}(\Gamma)}} \le \frac{C_{d_{I,\max}}\vep^{\frac 52}}{\left|\frac{\rho_1}{\rho_0} -\frac{\omega^2|\Omega|}{\mathcal C_\Omega c^2_0} - i\frac{\omega^3|\Omega|}{4\pi c^3_0}\vep\right|}.
\end{align*}
Moreover, using statement \eqref{a4} of Lemma \ref{le:7} and Lemma \ref{le:12}, we have
\begin{align*}
& \left\|\left(\Phi_{1/\vep} \circ SL_{\vep\omega/c_0}\right) S^{-1}_01 - \vep \mathcal C_\Omega \frac{e^{i\omega|\cdot-y_0|/c_0}}{4\pi|\cdot-y_0|}\right\|_{L^2_{-\alpha}(\R^3)} \le C_{d_{I,\max}, d_{I,\min}}\vep^{\frac 32},\\
& \left\|\left(\Phi_{1/\vep} \circ SL_{\vep\omega/c_0}\right) Res - \vep \mathcal C_\Omega \int_{\Gamma} Res(y)d\sigma(y) \frac{e^{i\omega|\cdot-y_0|/c_0}}{4\pi|\cdot-y_0|}\right\|_{L^2_{-\alpha}(\R^3)} \le \frac{C_{d_{I,\max},d_{I,\min}}\vep^{4}}{\left|\frac{\rho_1}{\rho_0} -\frac{\omega^2|\Omega|}{\mathcal C_\Omega c^2_0} - i\frac{\omega^3|\Omega|}{4\pi c^3_0}\vep\right|}.
\end{align*}  
From this, utilizing \eqref{eq:7}, \eqref{eq:95}, and the estimate of $\partial_\nu w_\omega^{in}$ on $\Gamma$ yields that when $c_1=c_0$, $u_{\omega,\vep}$ has the asymptotic expansion \eqref{eq:4} with the remainder term $u^{res}_{\omega,\vep}$ satisfying \eqref{eq:8} uniformly with respect to all $\omega \in I$.
\end{proof}

\subsection{\texorpdfstring{Proof of Theorem \ref{th:1} for the case $c_1\ne c_0$}{}} \label{sec:3.2}

The proof of Theorem \ref{th:1} for the case $c_1 \ne c_0$ is similar to that of the case $c_1 = c_0$. However, the integral representations \eqref{eq:62}, \eqref{eq:18} and \eqref{eq:16} for $c_1\ne c_0$ are significantly more complex than those for the case $c_1=c_0$. To address this, we require the following new identity.

\begin{lemma} \label{le:9}
Let $\vep>0$ and $w_{\omega,\vep}$ be the solution of \eqref{eq:62}. We have
\begin{align}\label{eq:86}
\langle \partial_\nu N_{\vep\omega/c_0} w_{\omega,\vep}, S^{-1}_01 \rangle_{S_0} = -\frac{\vep^2\omega^2}{\mathcal C_\Omega c^2_0}\int_\Omega \int_\Omega \frac{e^{i{\vep\omega}|x-y|/c_0}}{4\pi|x-y|} w_{\omega,\vep}(y)dxdy + \frac{c^2_1}{\vep^2\omega^2}\left\langle \partial_\nu w_{\omega,\vep}, S^{-1}_01 \right\rangle_{S_0}.
\end{align}
\end{lemma}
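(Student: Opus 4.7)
The plan is to exploit two ingredients: the self-adjointness of $S_0$ with respect to the $L^2(\Gamma)$ pairing, and the classical divergence theorem applied twice. The crucial preliminary observation is that the Newton kernel $1/(4\pi|x-y|)$ is symmetric, so $S_0$ is self-adjoint as a map $H^{-1/2}(\Gamma)\to H^{1/2}(\Gamma)$. Combined with the identity $S_0 S_0^{-1} 1 = 1$, this gives for every $\phi \in H^{-1/2}(\Gamma)$,
\begin{equation*}
\langle \phi, S_0^{-1} 1 \rangle_{S_0} \;=\; \mathcal C_\Omega^{-1} \int_\Gamma (S_0 \phi)(y)(S_0^{-1}1)(y)\,d\sigma(y) \;=\; \mathcal C_\Omega^{-1} \int_\Gamma \phi(y)\,(S_0 S_0^{-1} 1)(y)\,d\sigma(y) \;=\; \mathcal C_\Omega^{-1} \int_\Gamma \phi \, d\sigma.
\end{equation*}
Applying this simplification to both inner products in \eqref{eq:86} reduces the claim to
\begin{equation*}
\int_\Gamma \partial_\nu N_{\vep\omega/c_0} w_{\omega,\vep}\, d\sigma \;=\; -\frac{\vep^2\omega^2}{c_0^2}\int_\Omega \int_\Omega \frac{e^{i\vep\omega|x-y|/c_0}}{4\pi|x-y|} w_{\omega,\vep}(y)\, dy\, dx \;+\; \frac{c_1^2}{\vep^2\omega^2}\int_\Gamma \partial_\nu w_{\omega,\vep}\, d\sigma.
\end{equation*}

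Next I would apply the divergence theorem to the Newton potential $v := N_{\vep\omega/c_0} w_{\omega,\vep}$. As a standard volume potential, $v\in H^2_{\textrm{loc}}(\R^3)$ and satisfies $(\Delta + \vep^2\omega^2/c_0^2)v = -w_{\omega,\vep}$ pointwise in $\Omega$. Hence
\begin{equation*}
\int_\Gamma \partial_\nu v\, d\sigma \;=\; \int_\Omega \Delta v\, dx \;=\; -\frac{\vep^2\omega^2}{c_0^2}\int_\Omega v\, dx \;-\; \int_\Omega w_{\omega,\vep}\, dx,
\end{equation*}
and Fubini on $\int_\Omega v\, dx$ produces the iterated Newton integral, which is the first term on the right hand side of the target identity.

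It remains to rewrite $\int_\Omega w_{\omega,\vep}\, dx$ as a boundary integral of $\partial_\nu w_{\omega,\vep}$. This uses the PDE satisfied by $w_{\omega,\vep}$ itself: from the scaling $w_{\omega,\vep}(y) = u_{\omega,\vep}(y_0+\vep(y-y_0))$ and the interior Helmholtz equation $\Delta u + (\omega/c_1)^2 u = 0$ in $\Omega_\vep$, a chain rule computation yields $\Delta w_{\omega,\vep} + (\vep\omega/c_1)^2 w_{\omega,\vep} = 0$ in $\Omega$. Integrating this over $\Omega$ and applying the divergence theorem once more gives
\begin{equation*}
\int_\Omega w_{\omega,\vep}\, dx \;=\; -\frac{c_1^2}{\vep^2\omega^2} \int_\Omega \Delta w_{\omega,\vep}\, dx \;=\; -\frac{c_1^2}{\vep^2\omega^2} \int_\Gamma \partial_\nu w_{\omega,\vep}\, d\sigma,
\end{equation*}
and substituting this back into the previous display delivers the claim after multiplying by $\mathcal C_\Omega^{-1}$. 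No real obstacle is anticipated; the only point requiring attention is that the regularity needed for the divergence theorem is available, which follows from the $C^2$-smoothness of $\Gamma$, from $w_{\omega,\vep}\in H^2(\Omega)$ (interior elliptic regularity), and from the mapping properties of $N_z$ already recorded in Lemma \ref{le:5}.
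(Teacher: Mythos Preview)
Your proposal is correct and follows essentially the same route as the paper: the paper also reduces $\langle \phi, S_0^{-1}1\rangle_{S_0}$ to $\mathcal C_\Omega^{-1}\int_\Gamma \phi\,d\sigma$, then applies the divergence theorem to $N_{\vep\omega/c_0}w_{\omega,\vep}$ via the Helmholtz equation it satisfies, and finally converts $\int_\Omega w_{\omega,\vep}$ into $\int_\Gamma \partial_\nu w_{\omega,\vep}$ using the interior equation $\Delta w_{\omega,\vep}+(\vep\omega/c_1)^2 w_{\omega,\vep}=0$. Your write-up is slightly more explicit about the self-adjointness of $S_0$ and the regularity justifications, but the argument is the same.
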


\begin{proof}
By the definition of the scalar product $\langle \cdot,\cdot \rangle_{S_0}$ specified in \eqref{eq:83} and the fact that $w_{\omega,\vep} \in H^2(\Omega)$, we easily find 
\begin{align}
\mathcal C_\Omega\left\langle \partial_\nu N_{\vep\omega/c_0} w_{\omega,\vep}, S^{-1}_01 \right\rangle_{S_0} &= \int_\Gamma \partial_{\nu_x} \int_{\Omega} \frac{e^{i{\vep\omega}|x-y|/c_0}}{4\pi|x-y|}w_{\omega,\vep}(y)dy d\sigma(x) \notag\\
& = -\frac{\vep^2\omega^2}{c^2_0}\int_\Omega \int_\Omega  \frac{e^{i{\vep\omega}|x-y|/c_0}}{4\pi|x-y|} w_{\omega,\vep}(y) dx dy - \int_\Omega w_{\omega,\vep}(y) dy. \label{eq:84}
\end{align}
Since $w_{\omega,\vep}$ solves the Helmholtz equation with the wave number $\vep^2\omega^2/c^2_1$ in $\Omega$, we have
\begin{align*}
-\int_\Omega w_{\omega,\vep}(y) dy = \frac{c^2_1}{\vep^2\omega^2}\int_\Omega \Delta w_{\omega,\vep}(y) dy= \frac{c^2_1}{\vep^2 \omega^2} \int_\Gamma \partial_\nu w_{\omega,\vep}(y)d\sigma(y) = \frac{c^2_1\mathcal C_\Omega}{\vep^2\omega^2}\left\langle \partial_\nu w_{\omega,\vep}, S^{-1}_01 \right\rangle_{S_0}.
\end{align*}
Combining this with \eqref{eq:84} gives \eqref{eq:86}.
\end{proof}

We are ready to give the proof Theorem \ref{th:1} for the case $c_1\ne c_0$.

\begin{proof}[Proof of Theorem \ref{th:1} for the case $c_1\ne c_0$]
Let $\vep > 0$ be sufficiently small throughout the proof. Similar to the derivation of \eqref{eq:7}, we can use \eqref{eq:52}, \eqref{eq:62} to get
\begin{align} 
u_{\omega,\vep} & = u_{\omega}^{in}-\left(\frac{\rho_0}{\rho_1\vep^2}-1\right) \left(\Phi_{1/\vep} \circ SL_{\vep\omega/c_0}\right)\partial_\nu w_{\omega,\vep} \notag\\
&+ \left(\frac{1}{c^2_1}-\frac{1}{c_0^2}\right){\vep^2\omega^2} \left(\Phi_{1/\vep} \circ N_{\vep\omega/c_0}\right) w_{\omega,\vep}, \quad \textrm{in}\; \R^3 \backslash \Gamma.\label{eq:28}
\end{align}  
In contrast to the case of $c_1 = c_0$, we need to estimate both $\partial_\nu w_{\omega,\vep}$ on $\Gamma$ and $w_{\omega,\vep}$ in $\Omega$.

We first estimate $\partial_\nu w_{\omega,\vep}$ on $\Gamma$. Subtracting $(1-c^2_1/c^2_0) \mathcal P \partial_\nu w_{\omega,\vep}$ on both sides of \eqref{eq:16}, we have
\begin{align*}
\frac{\rho_0}{\rho_1\vep^2}\left(\Lambda_{\vep\omega,\vep}^{(2)} + \vep^2 \zeta \mathcal P\right)\partial_\nu w_{\omega,\vep} & = \partial_\nu w^{in}_\omega + \left(\frac{1}{c^2_1}-\frac{1}{c^2_0}\right){\vep^2\omega^2}\partial_\nu N_{\vep\omega/c_0} w_{\omega,\vep} + \left(\frac{c^2_1}{c^2_0}-1\right) \mathcal P \partial_\nu w_{\omega,\vep}\\\
&=: q_{\omega,\vep}\quad \textrm{on}\; \Gamma,
\end{align*}
where $\zeta:=\rho_1\left(c^2_1/c^2_0-1\right)/\rho_0$. Since the inverse of $\Lambda_{\vep\omega,\vep}^{(2)} + \vep^2 \zeta \mathcal P$ exists by Lemma \ref{le:2}, we have
\begin{align*}
\partial_\nu w_{\omega,\vep} = \frac{\rho_1\vep^2}{\rho_0}\left(\Lambda_{\vep\omega,\vep}^{(2)} + \vep^2 \zeta \mathcal P \right)^{-1} q_{\omega,\vep} \quad \text{on}\;\Gamma.
\end{align*}
Thus, in view of Lemma \ref{le:2}, to derive the estimate of $\partial_\nu w_{\omega,\vep}$ on $\Gamma$, it is necessary to estimate the projection coefficients $\langle q_{\omega,\vep}, S^{-1}_01\rangle_{S_0}$ of $q_{\omega,\vep}$ preceding the function $S^{-1}_01$. 
By the definition of the operator $\mathcal P$, we easily derive
\begin{align*}
\left\langle \mathcal P \partial_\nu w_{\omega,\vep}, S^{-1}_01\right\rangle_{S_0} = \langle \partial_\nu w_{\omega,\vep}, S^{-1}_01\rangle_{S_0}.
\end{align*}
From this, employing \eqref{eq:116} and Lemma \ref{le:9} gives
\begin{align}\label{eq:76}
\left|\langle q_{\omega,\vep}-\partial_\nu w_\omega^{in}, S^{-1}_01 \rangle_{S_0}\right|\le C_{d_{I,\max}}\vep^4\|N_{\vep\omega/c_0}\|_{L^2{(\Omega)}, H^2(\Omega)} \|w_{\omega,\vep}\|_{L^2(\Omega)}.
\end{align}
Combining \eqref{eq:116}, \eqref{eq:70} and \eqref{eq:76} yields 
\begin{align}\label{eq:24}
\left|\langle q_{\omega,\vep}, S^{-1}_01 \rangle_{S_0} - \frac{\vep^2 w^2}{c^{2}_0} \langle S_0^{-1} K^{(2)}1, S^{-1}_01\rangle_{S_0} u_{\omega}^{in}(y_0)\right| \le C_{d_{I,\max}}\left(\vep^{\frac 52} + \vep^4\|w_{\omega,\vep}\|_{L^2(\Omega)}\right).
\end{align}
Furthermore, using \eqref{eq:116} again and applying the trace formula  $\|\partial_\nu\phi\|_{H^{-1/2}(\Gamma)} \le C \|\phi\|_{H^1(\Omega)}$ for any $\phi \in H^2(\Omega)$, we find
\begin{align*}
\left\|{\vep^2\omega^2}\partial_\nu N_{\vep\omega/c_0} w_{\omega,\vep}\right\|_{H^{-\frac 12}(\Gamma)} \le C_{d_{I,\max}}\vep^2\|w_{\omega,\vep}\|_{L^2(\Omega)}.
\end{align*}
Therefore, using \eqref{eq:85}, \eqref{eq:112}, \eqref{eq:24} and Lemma \ref{le:2}, we arrive at 
\begin{align} \label{eq:81}
\vep^2 \partial_\nu w_{\omega,\vep} = \frac{\rho_1\vep^2}{\rho_0} \left[\frac{{\vep^2\omega^2}{c_0^{-2}} \langle S_0^{-1}K^{(2)} 1,  S^{-1}_01\rangle_{S_0} u_{\omega}^{in}(y_0)}{\zeta + \frac{\rho_1}{\rho_0}-\frac{\omega^2 |\Omega|}{\mathcal C_\Omega c^2_0}-i\frac{\omega^3|\Omega|}{4\pi c^3_0}\vep} S^{-1}_01 + Res\right],
\end{align}
where $Res$ satisfies 
\begin{align} \label{eq:75}
\left\|Res\right\|_{{H^{-\frac12}(\Gamma)}} \le \frac{C_{d_{I,\max}}\left(\vep^{\frac 5 2}+ \vep^4\|w_{\omega,\vep}\|_{L^2(\Omega)}
\right)}{\left|\zeta + \frac{\rho_1}{\rho_0} -\frac{\omega^2|\Omega|}{\mathcal C_\Omega c^2_0} - i\frac{\omega^3|\Omega|}{4\pi c^3_0}\vep\right|}.
\end{align}

Secondly, we estimate $\|w_{\omega,\vep}\|_{L^2(\Omega)}$. It follows from \eqref{eq:18} that
\begin{align*}
&\left(\mathbb I-\left(\frac{1}{c^2_1}-\frac{1}{c_0^2}\right)\vep^2\omega^2 N_{\vep\omega/c_0}\right)w_{\omega,\vep}  = w_{\omega}^{in}-\left(\frac{\rho_0}{\rho_1\vep^2}-1\right) SL_{\vep\omega/c_0} \partial_\nu w_{\omega,\vep} \quad \textrm{in}\; \Omega.
\end{align*}
By \eqref{eq:116}, we readily obtain 
\begin{align*}
\left\|\left(\mathbb I-(c^{-2}_1-c^{-2}_0)\vep^2\omega^2N_{\vep\omega/c_0}\right)^{-1}\right\|_{L^2(\Omega), L^2(\Omega)} \le C_{d_{I,\max}}.
\end{align*}
From this, we use \eqref{eq:127} to get
\begin{align}\label{eq:82}
\|w_{\omega,\vep}\|_{L^2{(\Omega)}} \le C_{d_{I,\max}} \|w^{in}_{\omega}\|_{L^2{(\Omega)}} + C_{d_{I,\max}}\left|\frac{\rho_0}{\rho_1\vep^2}-1\right| \|\partial_\nu w_{\omega,\vep}\|_{H^{-\frac 12}(\Gamma)}.
\end{align}
Since ${\vep^{2}}/{\left|\zeta + \frac{\rho_1}{\rho_0} -\frac{\omega^2|\Omega|}{\mathcal C_\Omega c^2_0} - i\frac{\omega^3|\Omega|}{4\pi c^3_0}\vep\right|} \le C_{d_{I,\max}} \vep$, combining \eqref{eq:81}, \eqref{eq:75} and \eqref{eq:82} leads to
\begin{align} 
\|w_{\omega,\vep}\|_{L^2{(\Omega)}} \le C_{d_{I,\max}} |w^{in}_{\omega}\|_{H^1{(\Omega)}}& + C_{d_{I,\max}} \frac{|u^{ {in}}_{\omega}(y_0)|}{\left|\zeta
+ \frac{\rho_1}{\rho_0} - \frac{\omega^2 |\Omega|}{\mathcal C_\Omega c^2_0}-i\frac{\omega^3|\Omega|}{4\pi c^3_0}\vep\right|} \notag \\
&+ C_{d_{I,\max}}\frac{\vep^{\frac 12}}{\left|\zeta
+ \frac{\rho_1}{\rho_0}-\frac{\omega^2 |\Omega|}{\mathcal C_\Omega c^2_0}-i\frac{\omega^3|\Omega|}{4\pi c^3_0}\vep\right|} \label{eq:30}
\end{align}
for sufficiently small $\vep>0$. 

With the help of \eqref{eq:81}, \eqref{eq:75}, \eqref{eq:30}, statements \eqref{a3} and \eqref{a4} of Lemma \ref{le:7}, Lemma \ref{le:12} and Lemma \ref{le:6}, we  conclude from \eqref{eq:28} that when $c_1\ne c_0$, $u^{sc}_\omega(\vep)$ has the asymptotic expansion \eqref{eq:4} with the remainder term satisfying \eqref{eq:8} uniformly with respect to all $\omega \in I$. Hence, the proof is completed.

\end{proof}

\section{Resolvent's asymptotics of the scaled Hamiltonian}\label{section-4}

This section is devoted to proving Theorem \ref{th:2} and Corollary \ref{th:3}. It should be noted that the Lippmann-Schwinger equation and the spectral properties of $K^*_0$ are also crucial elements in deriving the uniform asymptotic expansion of the resolvent operator $R^H_{\rho_\vep,k_\vep}$.  

We begin by introducing the Lippmann-Schwinger equation corresponding to the resolvent $R^H_{\rho_\vep,k_\vep}(z)$. Let $\alpha>1/2$. For any $f\in L_{\alpha}^2(\R^3)$ and $z\in \overline{\CC_+}\backslash\{0\}$, denote by $v_z^f: = R^H_{\rho_0,k_0}(z)f$ and $u_{z,\vep}^f:=R^H_{\rho_\vep, k_\vep}(z)f$. It is known that $v_z^f \in H_{-\alpha}^{2}(\R^3)$ and $u_{z,\vep}^f \in H_{-\alpha}^{2}(\R^3 \backslash \Gamma_\vep) \cap H^1_{loc}(\R^3)$ solves
\begin{align}
& k_0\nabla \cdot \frac{1}{\rho_0} \nabla v_z^{f}  + z^2  v_z^f = -f \quad \textrm{in}\; \mathbb R^3 \label{eq:128} 
\end{align}
and
\begin{align*}
&k_\vep\nabla \cdot \frac 1 {\rho_\vep} \nabla u_{z,\vep}^f + z^2 u_{z,\vep}^f = -f \quad  \text{in}\;\R^3,
\end{align*}
respectively. Therefore, employing Green formulas leads to the following 
Lippmann-Schwinger equation
\begin{align}
& u^f_{z,\vep}(x)= v^f_z(x) + \left(\frac{1}{c^2_1} - \frac{1}{c^2_0} \right)z^2\int_{\Omega_\vep} \frac{e^{i{z}|x-y|/c_0}}{4\pi|x-y|}u^f_{z,\vep}(y)dy + \left(\frac{1}{c^2_1}-\frac{1}{c^2_0}\right)\int_{\Omega_\vep} \frac{e^{i{z}|x-y|/c_0}}{4\pi|x-y|} f(y) dy \notag\\
&\quad\quad \quad \;-\left(\frac{\rho_0}{\rho_1\vep^2}-1\right)\int_{\Gamma_\vep}\frac{e^{i{z}{|x-y|}/c_0}}{4\pi|x-y|} \partial_\nu u^f_{z,\vep}(y)d\sigma(y), \quad x\in \R^3 \backslash \Gamma_\vep,  \label{eq:13}
 \end{align}
where the value $u^f_{z,\vep}$ within $\Omega_\vep$ and the normal derivative $\partial_\nu u^f_{z, \vep}$ on $\Gamma_\vep$ are determined by
\begin{align} \label{eq:35}
\left(\mathbb I-\left(\frac{1}{c^2_1} - \frac{1}{c_0^2}\right)z^2N_{\Omega_\vep, z/c_0}\right)u^f_{z,\vep}&+\left(\frac{\rho_0}{\rho_1\vep^2}-1\right)\int_{\Gamma_\vep}\frac{e^{i{z}{|x-y|}/c_0}}{4\pi|x-y|} \partial_\nu u^f_{z,\vep}(y)d\sigma(y) \notag\\
&= v_z^{f} + \left(\frac{1}{c^2_1}-\frac{1}{c^2_0}\right) N_{\Omega_\vep, z/c_0} f, \quad \textrm{in}\; \Omega_\vep
\end{align}
and 
\begin{align}
&\left(\frac{1}{c^2_0}-\frac{1}{c^2_1}\right)z^2\partial_\nu N_{\Omega_\vep, z/c_0} u^f_{z,\vep} + \frac{\rho_0}{\rho_1\vep^2} \left(\frac 12\left(1 + \frac {\rho_1\vep^2} {\rho_0}\right) \mathbb I + \left(1-\frac {\rho_1\vep^2}{\rho_0}\right)K_{\Gamma_\vep, z/c_0}^*\right)\partial_\nu u^f_{z,\vep} \notag \\
&\qquad \qquad \qquad \; \qquad \qquad \qquad = \partial_\nu v_z^{f} + \left(\frac{1}{c^2_1}-\frac{1}{c^2_0}\right)\partial_\nu N_{\Omega_\vep, z/c_0} f \quad \textrm{on}\; \Gamma_\vep. \label{eq:12}
\end{align}

Consider the scaled functions $\widetilde v_{z,\vep}^{f}(y):= \left(v_z^{f}\circ \Phi_\vep\right)(y)$, $\widetilde u^{f}_{z,\vep}(y):=\left(u_{z,\vep}^{f}\circ \Phi_\vep\right)(y)$ and $\widetilde f (y) := \left(f\circ \Phi_\vep\right)(y)$ for $y\in \R^3$. The following lemma will investigate the properties of these scaled functions $\widetilde v_{z,\vep}^{f}$, $\widetilde u_{z,\vep}^{f}$ and $\widetilde f$.

\begin{lemma}\label{le:14}
Let  $z\in \overline{\CC_+}\backslash \{0\}$ and $\vep > 0$. The following arguments hold true.
\begin{enumerate}[(a)]
\item \label{g1}
For every $f\in L_{\alpha}^2(\R^3)$, we have
\begin{align*}
&\widetilde u^{f}_{z,\vep}= \widetilde v^{f}_{z,\vep} + \left(\frac{1}{c^2_1} - \frac{1}{c^2_0} \right)\vep^2z^2 N_{\vep z/c_0}\widetilde u^{f}_{z,\vep} + \vep^2\left(\frac{1}{c^2_1} - \frac{1}{c^2_0}\right)N_{\vep z/c_0} \widetilde f\notag\\
&\quad\quad \quad \;-\left(\frac{\rho_0}{\rho_1\vep^2}-1\right) SL_{\vep z/c_0}\partial_\nu \widetilde u^{f}_{z,\vep}(y), \quad \mathrm{in}\; \R^3 \backslash \Gamma, 
\end{align*}
where the value $\widetilde u^{f}_{z,\vep}$ within $\Omega$ and the normal derivative $\partial_\nu \widetilde u^{f}_{z, \vep}$ on $\Gamma$ are determined by
\begin{align}
&\left(\mathbb I-\left(\frac{1}{c^2_1}- \frac{1}{c_0^2}\right)\vep^2z^2N_{\vep z/c_0}\right) \widetilde u^{f}_{z,\vep} + \left(\frac{\rho_0}{\rho_1\vep^2}-1\right)SL_{\vep z/c_0} \partial_\nu \widetilde u^{f}_{z,\vep}\notag\\
& \qquad \qquad \qquad \qquad = \widetilde v_{z,\vep}^{f} + \vep^2\left(\frac{1}{c^2_1} - \frac{1}{c^2_0}\right) N_{\vep z/c_0} \widetilde f  \quad \textrm{in}\; \Omega \label{eq:87}
\end{align}
and 
\begin{align}
\left(\frac{1}{c^2_0}-\frac{1}{c^2_1}\right)\vep^2 z^2\partial_\nu N_{\vep z/c_0} \widetilde u^{f}_{z,\vep}+\frac{\rho_0}{\rho_1\vep^2} \Lambda^{(2)}_{\vep z,\vep} \partial_\nu \widetilde u^{f}_{z,\vep}  = \partial_\nu \widetilde v_{z,\vep}^{f} + \vep^2\left(\frac{1}{c^2_1} - \frac{1}{c^2_0}\right) \partial_\nu N_{\vep z/c_0} \widetilde f \;\; \textrm{on}\; \Gamma.  \label{eq:99}
\end{align}
Here, the operator $\Lambda^{(2)}_{\vep z,\vep}$ is defined by  \eqref{eq:23}.

\item \label{g3} For every $f\in L_{\alpha}^2(\R^3)$ and $\vep > 0$, we have 
\begin{align} \label{eq:124}
&\left\|\gamma\left(\widetilde v^{f}_{z,\vep}- v_{z}^{f}(y_0)\right)\right\|_{H^{\frac32}(\Gamma)} \le C\frac{1+|z|^{2}}{|z|}\vep^{\frac 12}\|f\|_{L^2_{\alpha}(\mathbb R^3)},\\
&\left|v_{z}^{f}(y_0)\right| \le C\frac{1+|z|^{2}}{|z|}\|f\|_{L^2_{\alpha}(\mathbb R^3)}. \label{eq:125}
\end{align}
Here, $C$ is a positive constant independent of $\vep$ and $z$.

\item \label{g4}
For every $f\in L_{\alpha}^2(\R^3)$ and $\vep > 0$, we have
\begin{align}\label{eq:90}
\langle \partial_\nu N_{\vep z/c_0} \widetilde u^{f}_{z,\vep}, S^{-1}_01 \rangle_{S_0} &= -\frac{\vep^2 z^2}{\mathcal C_\Omega c^2_0}\int_\Omega \int_\Omega \frac{e^{i{\vep z}|x-y|/c_0}}{4\pi|x-y|} \widetilde u^{f}_{z,\vep}(y)dxdy + \frac{c^2_1}{\vep^2 z^2}\left\langle \partial_\nu \widetilde u^{f}_{z,\vep}, S^{-1}_01 \right\rangle_{S_0} \notag \\
&+ \frac{1}{\mathcal C_\Omega z^2}\int_\Omega \widetilde f(y) dy
\end{align}
and
\begin{align} \label{eq:105}
\langle \partial_\nu N_{\vep z/c_0} \widetilde f, S^{-1}_01 \rangle_{S_0} = -\frac{\vep^2 z^2}{\mathcal C_\Omega c^2_0}\int_\Omega \int_\Omega \frac{e^{i{\vep z}|x-y|/c_0}}{4\pi|x-y|} \widetilde f(y)dxdy -  \frac{1}{\mathcal C_\Omega}\int_\Omega \widetilde f(y) dy.
\end{align}
\end{enumerate}
\end{lemma}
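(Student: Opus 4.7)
My plan is to verify each of the three statements by direct calculation, exploiting the scaling behaviour of the layer potentials together with Green's identities.

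For statement \eqref{g1}, I change variables $x = \Phi_\vep(x')$ and $y = \Phi_\vep(y')$ in the Lippmann-Schwinger equation \eqref{eq:13} and in the coupled system \eqref{eq:35}--\eqref{eq:12}. The volume Jacobian $\vep^3$ combines with the kernel factor $|x-y|^{-1} = \vep^{-1}|x'-y'|^{-1}$ to turn the Newtonian potential on $\Omega_\vep$ into $\vep^2 N_{\vep z/c_0}$ acting on the scaled unknown. The surface measure scales as $\vep^2$, so the single-layer integral on $\Gamma_\vep$ becomes $\vep\, SL_{\vep z/c_0}$; combined with the rescaling of the normal derivative $\partial_\nu u = \vep^{-1}\partial_\nu \widetilde u$, this produces exactly $SL_{\vep z/c_0}\,\partial_\nu \widetilde u^{f}_{z,\vep}$ and one recovers the stated Lippmann-Schwinger identity for $\widetilde u^{f}_{z,\vep}$ together with the interior equation \eqref{eq:87}. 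Applying the same change of variables to \eqref{eq:12}, followed by multiplication by $\vep$ and noting that the kernel of $K^*_{\Gamma_\vep, z/c_0}$ is homogeneous of degree $-2$ in $x-y$ (so its scaling is exactly cancelled by the $\vep^2$ of the surface measure), yields \eqref{eq:99} with $\Lambda^{(2)}_{\vep z,\vep}$ defined as in \eqref{eq:23}.

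Statement \eqref{g3} is a direct combination of earlier estimates. Using $R^H_{\rho_0,k_0}(z) = -c_0^{-2} R_{z/c_0}$ together with Lemma \ref{le:12}, I obtain $\|v^{f}_z\|_{H^2_{-\alpha}(\R^3)} \le C(1+|z|^{1/2})|z|^{-1/4}\|f\|_{L^2_\alpha(\R^3)}$; since the weight $(1+|x|^2)^{-\alpha/2}$ is bounded below on the bounded ball $B_1(y_0)$, the same bound holds for $\|v^{f}_z\|_{H^2(B_1(y_0))}$. Inequality \eqref{eq:124} then follows from statement \eqref{a1} of Lemma \ref{le:7}, and inequality \eqref{eq:125} from the Sobolev embedding $H^2(B_1(y_0)) \hookrightarrow C(\overline{B_1(y_0)})$ in three dimensions.

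For statement \eqref{g4}, the key input is the interior PDE satisfied by $\widetilde u^{f}_{z,\vep}$: since $u^{f}_{z,\vep}$ obeys $c_1^2 \Delta u + z^2 u = -f$ in $\Omega_\vep$, rescaling gives $\Delta \widetilde u^{f}_{z,\vep} + (\vep z/c_1)^2 \widetilde u^{f}_{z,\vep} = -\vep^2 c_1^{-2}\widetilde f$ in $\Omega$. Because the kernel of $N_{\vep z/c_0}$ is a fundamental solution of $-(\Delta + (\vep z/c_0)^2)$, Green's first identity on $\Omega$ gives $\int_\Gamma \partial_\nu N_{\vep z/c_0}\widetilde u^{f}_{z,\vep}\,d\sigma = -\int_\Omega \widetilde u^{f}_{z,\vep}\,dy - (\vep z/c_0)^2 \int_\Omega N_{\vep z/c_0}\widetilde u^{f}_{z,\vep}\,dy$. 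I then eliminate the term $-\int_\Omega \widetilde u^{f}_{z,\vep}\,dy$ by integrating the interior PDE over $\Omega$ and applying the divergence theorem a second time, yielding $-\int_\Omega \widetilde u^{f}_{z,\vep}\,dy = (c_1/(\vep z))^2\int_\Gamma \partial_\nu \widetilde u^{f}_{z,\vep}\,d\sigma + z^{-2}\int_\Omega \widetilde f\,dy$. Finally, the self-adjointness of $S_0$ together with $S_0 S_0^{-1}1 = 1$ gives $\int_\Gamma \phi\,d\sigma = \mathcal C_\Omega \langle \phi, S_0^{-1}1\rangle_{S_0}$ for every admissible $\phi$, and dividing through by $\mathcal C_\Omega$ one recovers \eqref{eq:90}. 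Identity \eqref{eq:105} follows by the same Green's identity applied directly to $N_{\vep z/c_0}\widetilde f$, without invoking the PDE.

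None of these steps requires a genuinely new idea; the difficulty is combinatorial bookkeeping. In \eqref{g1} the main care is to separate the three sources of $\vep$-powers (Jacobian, kernel, normal derivative) so that the rescaled operators $N_{\vep z/c_0}$, $SL_{\vep z/c_0}$ and $K^*_{\vep z/c_0}$ emerge with the correct prefactors. In \eqref{g4} the only subtle point is passing from the boundary integral $\int_\Gamma \phi\,d\sigma$ to the pairing $\mathcal C_\Omega\langle \phi, S_0^{-1}1\rangle_{S_0}$, which relies on the self-adjointness of $S_0\colon H^{-1/2}(\Gamma)\to H^{1/2}(\Gamma)$.
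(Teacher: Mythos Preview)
Your proposal is correct and follows essentially the same approach as the paper: for \eqref{g1} you rescale the integral equations \eqref{eq:13}--\eqref{eq:12} directly (the paper phrases this as noting that the scaled functions satisfy the rescaled PDEs \eqref{eq:94}, \eqref{eq:31} and then invoking \eqref{eq:13}--\eqref{eq:12}, which is the same computation); for \eqref{g3} you combine Lemma~\ref{le:7}\eqref{a1} with Lemma~\ref{le:12} and Sobolev embedding exactly as the paper does; and for \eqref{g4} your Green's-identity argument is precisely the ``proceeding as in the derivation of \eqref{eq:86}'' that the paper cites. One small wording issue: the kernel of $K^*_{\Gamma_\vep,z/c_0}$ is not literally homogeneous of degree $-2$ when $z\ne 0$ because of the exponential factor, but your scaling conclusion (that it becomes $K^*_{\vep z/c_0}$ on $\Gamma$) is correct since the exponential simply absorbs the $\vep$ into the wavenumber.
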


\begin{proof}
\eqref{g1} It is easy to verify that $\widetilde v_{z,\vep}^{f} \in  H_{-\alpha}^{2}(\R^3)$ and $\widetilde u_{z,\vep}^{f} \in  H_{-\alpha}^{2}(\R^3 \backslash \Gamma) \cap H^1_{loc}(\R^3)$ satisfy 
\begin{align}
& k_0\nabla \cdot \frac{1}{\rho_0} \nabla \widetilde v_{z,\vep}^{f}  + \vep^2 z^2  \widetilde v_{z,\vep}^{f}= -\vep^2 \widetilde f,\; \qquad \; \text{in}\; \mathbb R^3 \label{eq:94}
\end{align}
and 
\begin{align}
k_\vep\circ \Phi_\vep \nabla  \cdot \frac 1 {\rho_\vep \circ \Phi_\vep} \nabla \widetilde u_{z,\vep}^{f} + \vep^2z^2 \widetilde u_{z,\vep}^{f} = -\vep^2 \widetilde f,\;  \quad 
\;\;\; \;\; \; \text{in}\;\R^3,  \label{eq:31}
\end{align}
respectively. Therefore, the assertion of this statement easily follows from \eqref{eq:13}, \eqref{eq:35} and \eqref{eq:12}.

\eqref{g3} 
Since $v_{z}^{f}$ is the solution of equation \eqref{eq:128},  
using statement \eqref{a1} of Lemma \ref{le:7} and Lemma \ref{le:12} yields
\begin{align*}
\left\|\gamma\left(\widetilde v^{f}_{z,\vep} - v_{z}^{f}(y_0)\right)\right\|_{H^{\frac32}(\Gamma)} \le  C\vep^{\frac 12}\|v^{f}_{z}\|_{H^2(B_1(y_0))}\le C\frac{1+|z|^{2}}{|z|}\vep^{\frac 12}\|f\|_{L^2_{\alpha}(\mathbb R^3)}.
\end{align*}
This implies \eqref{eq:124}.
Moreover, it follows from inequality \eqref{eq:11} and Lemma \ref{le:12} that \eqref{eq:125} holds.

\eqref{g4} Proceeding as in the derivation of \eqref{eq:86}, we can apply \eqref{eq:31} to get \eqref{eq:90} and \eqref{eq:105}.
\end{proof}

In the sequel, we prepare several important estimates before proving Theorem \ref{th:2} and Corollary \ref{th:3}.

\begin{lemma}\label{le:13}
Let $\vep > 0$ and $z\in \overline{\CC_+}\backslash \{0\}$. 
Assume that $\alpha>1/2$. For every $f\in L^2_{\alpha}(\R^3)$, we have 
\begin{align}
&\left\|R^H_{\rho_0,k_0}(z)f - R^H_{\rho_0,k_0}(z)f_{a,\vep}\right\|_{L^2_{-\alpha}(\R^3)} \le C\vep^{\frac 32}\|R_{z/c_0}\|_{L^2_{\alpha}(\R^3), H^2_{-\alpha}(\R^3)}\|f\|_{L^2_{\alpha}(\R^3)}, \label{eq:14}\\
&\left|\left(R^H_{\rho_0,k_0}(z)f\right)(y_0) -\left(R^H_{\rho_0,k_0}(z)f_{a,\vep}\right)(y_0)\right|\le  C\vep^{\frac 12} \|f\|_{L^2_{\alpha}(\R^3)}. \label{eq:88}
\end{align}
Here, $f_{a,\vep}$ is defined by 
\begin{align} \label{eq:15}
f_{a,\vep}(x) = \begin{cases}
f(x) &  x \in \R^3 \backslash \Omega_\vep,\\
a\vep^2 f(x) & x\in \Omega_\vep,
\end{cases}
\quad \textrm{for}\; f\in L_{\alpha}^2(\R^3)
\end{align}
and $C$ is a constant independent of $\vep$ and $z$.
\end{lemma}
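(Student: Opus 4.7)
The plan is to exploit that $f-f_{a,\vep}=(1-a\vep^2)f\chi_{\Omega_\vep}$ is supported in the small region $\Omega_\vep$ (of volume $|\Omega_\vep|=\vep^3|\Omega|$), and that $R^H_{\rho_0,k_0}(z)=-c_0^{-2}R_{z/c_0}$ is a convolution with the kernel $e^{iz\cdot/c_0}/(4\pi|\cdot|)$. Both estimates thus reduce to bounding an integral operator applied to a function whose support is confined to $\Omega_\vep$.

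For \eqref{eq:14} I would argue by duality against test functions $v\in L^2_\alpha(\R^3)$. A Fubini interchange gives
\begin{align*}
\big\langle R^H_{\rho_0,k_0}(z)(f-f_{a,\vep}),v\big\rangle=-c_0^{-2}(1-a\vep^2)\int_{\Omega_\vep} f(y)\,\overline{R_{-\bar z/c_0}v(y)}\,dy.
\end{align*}
Cauchy--Schwarz reduces this to $\|f\|_{L^2(\Omega_\vep)}\,\|R_{-\bar z/c_0}v\|_{L^2(\Omega_\vep)}$. The first factor is bounded by $C\|f\|_{L^2_\alpha}$ because on $\Omega_\vep\subset B_1(y_0)$ (valid once $\vep$ is small) the weight $(1+|y|^2)^{\alpha}$ is bounded by a constant depending only on $y_0$ and $\alpha$. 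The second factor is handled by
\begin{align*}
\|R_{-\bar z/c_0}v\|_{L^2(\Omega_\vep)}\le |\Omega_\vep|^{1/2}\|R_{-\bar z/c_0}v\|_{L^\infty(B_1(y_0))}\le C\vep^{3/2}\|R_{-\bar z/c_0}v\|_{H^2(B_1(y_0))},
\end{align*}
using the 3D Sobolev embedding \eqref{eq:11}. Since the weight is bounded on $B_1(y_0)$, one has $\|\cdot\|_{H^2(B_1(y_0))}\le C\|\cdot\|_{H^2_{-\alpha}(\R^3)}$, and the identity $R_{-\bar z}v=\overline{R_z\bar v}$ yields $\|R_{-\bar z/c_0}\|_{L^2_\alpha,H^2_{-\alpha}}=\|R_{z/c_0}\|_{L^2_\alpha,H^2_{-\alpha}}$. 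Collecting everything and taking the supremum over $v$ with $\|v\|_{L^2_\alpha}=1$ produces \eqref{eq:14}.

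For \eqref{eq:88} I evaluate pointwise at $y_0$, obtaining
\begin{align*}
\Big|\big(R^H_{\rho_0,k_0}(z)(f-f_{a,\vep})\big)(y_0)\Big|\le \frac{|1-a\vep^2|}{c_0^2}\int_{\Omega_\vep}\frac{|f(y)|}{4\pi|y_0-y|}\,dy,
\end{align*}
using $|e^{iz|y_0-y|/c_0}|\le 1$ for $z\in\overline{\CC_+}$. Cauchy--Schwarz produces $\big(\int_{\Omega_\vep}|y_0-y|^{-2}\,dy\big)^{1/2}\|f\|_{L^2(\Omega_\vep)}$, and the change of variables $y=\Phi_\vep(t)$ reveals $\int_{\Omega_\vep}|y_0-y|^{-2}\,dy=\vep\int_\Omega|t-y_0|^{-2}\,dt\le C\vep$, the inner integral being finite in three dimensions regardless of whether $y_0$ lies in $\Omega$. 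Combining with $\|f\|_{L^2(\Omega_\vep)}\le C\|f\|_{L^2_\alpha}$ yields \eqref{eq:88}, with $z$-independence of the constant inherited from the modulus bound on the exponential.

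The main delicate step is extracting the $\vep^{3/2}$ factor in \eqref{eq:14}: it hinges on passing from an $L^2(\Omega_\vep)$ bound to a pointwise Sobolev bound on $R_{-\bar z/c_0}v$, and then reconciling the local $H^2(B_1(y_0))$ norm with the global weighted norm $H^2_{-\alpha}(\R^3)$ via a constant that is uniform in $\vep$ and $z$. Once this is in place, the measure scaling $|\Omega_\vep|^{1/2}=\vep^{3/2}|\Omega|^{1/2}$ supplies the rate in \eqref{eq:14}, and the analogous Newtonian-kernel estimate $\int_{\Omega_\vep}|y_0-y|^{-2}dy=O(\vep)$ furnishes the $\vep^{1/2}$ rate in \eqref{eq:88}.
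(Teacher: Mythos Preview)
Your proof is correct and follows essentially the same route as the paper's. The only cosmetic difference is that you use the sesquilinear duality pairing (producing $R_{-\bar z/c_0}v$ and requiring the conjugation identity $R_{-\bar z}v=\overline{R_z\bar v}$), whereas the paper uses the bilinear pairing $\int (R^H_{\rho_0,k_0}(f-f_{a,\vep}))\,g\,dx$ so that $R_{z/c_0}g$ appears directly without conjugation; in either case the $\vep^{3/2}$ factor comes from the Sobolev embedding $H^2\hookrightarrow L^\infty$ combined with $|\Omega_\vep|^{1/2}=\vep^{3/2}|\Omega|^{1/2}$, and the argument for \eqref{eq:88} is identical.
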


\begin{proof}
First, we prove that \eqref{eq:14} holds. A straightforward calculation gives 
\begin{align}\label{eq:25}
\left(R^H_{\rho_0,k_0}(f-f_{a,\vep})\right)(x) = -\int_{\Omega_\vep} \frac{e^{iz|x-y|/c_0}}{4\pi|x-y|}\frac{(1-a\vep^2)}{c_0^2}f(y)dy, \quad x\in \R^3.
\end{align}
Thus, for any $g \in L_{\alpha}^2(\R^3)$, we have 
\begin{align}
\int_{\R^3}\left(R^H_{\rho_0,k_0}(f-f_{a,\vep})\right)(x) g(x) dx  = -\int_{\Omega_\vep} \frac{1-a\vep^2}{c^2_0} f(y) (R_{z/c_0} g)(y)dy. \label{eq:33}
\end{align}
Combining \eqref{eq:11} and \eqref{eq:33} gives 
\begin{align*} 
\left|\int_{\R^3}\left(R^H_{\rho_0,k_0}(f-f_\vep)\right)(x) g(x) dx\right| & \le C \|R_{z/c_0} g\|_{L^{\infty}(\Omega_\vep)}\int_{\Omega_\vep} |f(y)|dy \\
& \le C\vep^{\frac32}\|R_{z/c_0}\|_{L^2_{\alpha}(\R^3), H^2_{-\alpha}(\R^3)} \|f\|_{L^2_{\alpha}(\R^3)}|g\|_{L^2_{\alpha}(\R^3)},
\end{align*}
whence \eqref{eq:14} follows.

Second, we focus on the estimation of \eqref{eq:88}. It follows from \eqref{eq:25} that 
\begin{align*}
\left|\left(R^H_{\rho_0,k_0}(f-f_{a,\vep})\right)(y_0)\right| = \frac{|1-a\vep^2|}{c_0^2}\left|\int_{\Omega_\vep} \frac{e^{iz|y_0-y|/c_0}}{4\pi|y_0-y|}f(y)dy\right|.
\end{align*}
By Cauchy--Schwartz inequality, we have
\begin{align*}
\left|\left(R^H_{\rho_0,k_0}(f-f_{a,\vep})\right)(y_0)\right| &\le C \|f\|_{L^2_{\alpha}(\R^3)} \frac 1{4\pi}\left(\int_{\Omega_\vep}\frac{1}{|y_0-y|^2}dy\right)^{\frac 12}\le C \vep^{\frac 12} \|f\|_{L^2_{\alpha}(\R^3)}.
\end{align*}
This directly implies that \eqref{eq:88} holds.

\end{proof}

\begin{lemma}\label{le:11}
Let $z\in \overline{\CC_+}$ with $|z|<1$. Suppose that $\vep > 0$ is sufficiently small such that $\vep z$ is not a Dirichlet eigenvalue of $-\Delta$ in $\Omega$ and that $\vep < 1/\sup_{x\in \Omega}|x-y_0|$. The following arguments hold true.
\begin{enumerate}[(a)]
\item \label{e1}
For every $f\in H_{\textrm{loc}}^2(\R^3)$, we have 
\begin{align}
& \left\|S_{\vep z}^{-1} \gamma N_{\vep z} \left(f\circ \Phi_\vep\right)- f(y_0)S_0^{-1} \gamma N_0 1\right\|_{H^{-\frac{1}2}(\Gamma)} \le C\vep^{\frac 12}\|f\|_{H^2(B_1(y_0))}, \label{eq:117}\\
&\left|\langle S_{\vep z}^{-1}\gamma N_{\vep z} \left(f\circ \Phi_\vep\right) , S^{-1}_01\rangle_{S_0} - \frac{f(y_0)|\Omega|}{\mathcal C_\Omega}\right| \le C\vep^{\frac 12}\|f\|_{H^2(B_1(y_0))},\quad \textrm{as}\; \vep \rightarrow 0.\label{eq:106}
\end{align}

\item \label{e2} Let $a \in \R_+ $. For every $f\in L^2(\R^3)$, we have
\begin{align} 
& \left\|S_{\vep z}^{-1} \gamma N_{\vep z} f_{a,\vep}\circ \Phi_\vep\right\|_{H^{-\frac12}(\Gamma)} \le C\vep^{\frac 12}\|f\|_{L^2(\R^3)}  \quad \textrm{as}\; \vep \rightarrow 0, \label{eq:131}
\end{align} 
where $f_{a,\vep}$ is defined in \eqref{eq:15}.
\end{enumerate}
Here, $C$ is a constant independent of $\vep$ and $z$.
\end{lemma}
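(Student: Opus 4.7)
\textbf{Approach for part \eqref{e1}, inequality \eqref{eq:117}.} The plan is to split the error into a telescoping sum and bound each piece with earlier lemmas. I would write
\begin{align*}
N_{\vep z}(f\circ\Phi_\vep) - f(y_0)\, N_0 1 \;=\; N_{\vep z}\bigl(f\circ\Phi_\vep - f(y_0)\bigr) \;+\; f(y_0)\bigl(N_{\vep z}-N_0\bigr) 1.
\end{align*}
The first summand is $O(\vep^{1/2})$ in $H^2(\Omega)$ by combining the uniform bound \eqref{eq:116} on $\|N_{\vep z}\|_{L^2(\Omega),H^2(\Omega)}$ with the scaling estimate \eqref{eq:26} (applied to $\phi_1 = f$), while the second summand is $O(\vep|z|)$ in $H^2(\Omega)$ by statement \eqref{b4} of Lemma \ref{le:5}. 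Taking the Dirichlet trace via \eqref{eq:63} transfers the bound to $H^{3/2}(\Gamma)$ at the same order, after which applying $S_{\vep z}^{-1} = S_0^{-1} + O(\vep|z|)$ from \eqref{eq:78} (which is available because $\vep z$ is not a Dirichlet eigenvalue by hypothesis, and $|\vep z|<1$ implies uniform boundedness of $S_{\vep z}^{-1}$) yields \eqref{eq:117}. The point value $|f(y_0)|$ that appears is absorbed via the Sobolev embedding $|f(y_0)|\le C\|f\|_{H^2(B_1(y_0))}$ used already in \eqref{eq:11}.

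\textbf{From \eqref{eq:117} to \eqref{eq:106}.} Because $\langle \phi, S_0^{-1} 1\rangle_{S_0} = \mathcal C_\Omega^{-1}\int_\Gamma (S_0\phi)\,S_0^{-1} 1\,d\sigma$ and $S_0\in\mathcal L(H^{-1/2}(\Gamma),H^{1/2}(\Gamma))$, the functional $\langle\,\cdot\,,S_0^{-1}1\rangle_{S_0}$ is continuous on $H^{-1/2}(\Gamma)$, so \eqref{eq:106} reduces to identifying the limiting constant
\begin{align*}
\bigl\langle S_0^{-1}\gamma N_0 1,\;S_0^{-1}1\bigr\rangle_{S_0} \;=\; \frac{|\Omega|}{\mathcal C_\Omega}.
\end{align*}
Unwinding the pairing and applying Fubini gives
\begin{align*}
\int_\Gamma (\gamma N_0 1)(y)(S_0^{-1}1)(y)\,d\sigma(y) \;=\; \int_\Omega \int_\Gamma \frac{(S_0^{-1}1)(y)}{4\pi|x-y|}d\sigma(y)\,dx \;=\; \int_\Omega SL_0(S_0^{-1}1)(x)\,dx.
\end{align*}
The capacity potential $SL_0(S_0^{-1}1)$ is harmonic in $\Omega$ with boundary trace $1$ on $\Gamma$, hence equals $1$ identically in $\Omega$ by uniqueness of the interior Dirichlet problem; the right-hand integral is therefore $|\Omega|$, producing the required constant.

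\textbf{Approach for part \eqref{e2}.} Since $\Phi_\vep(y)\in\Omega_\vep$ iff $y\in\Omega$ and the Newtonian operator $N_{\vep z}$ integrates only over $\Omega$, we have $(f_{a,\vep}\circ\Phi_\vep)|_\Omega = a\vep^2 (f\circ\Phi_\vep)|_\Omega$, so
\begin{align*}
S_{\vep z}^{-1}\gamma N_{\vep z}(f_{a,\vep}\circ\Phi_\vep) \;=\; a\vep^2\, S_{\vep z}^{-1}\gamma N_{\vep z}(f\circ\Phi_\vep).
\end{align*}
The change of variables $x=\Phi_\vep(y)$ yields $\|f\circ\Phi_\vep\|_{L^2(\Omega)} \le \vep^{-3/2}\|f\|_{L^2(\R^3)}$. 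Combining the uniform bound \eqref{eq:116} on $N_{\vep z}$, the trace inequality \eqref{eq:63}, and the uniform boundedness of $S_{\vep z}^{-1}$ produces the total factor $a\vep^2 \cdot C \cdot \vep^{-3/2} = Ca\vep^{1/2}$, which is exactly \eqref{eq:131}.

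\textbf{Main obstacle.} No step is technically deep, and the entire argument boils down to previously established uniform bounds together with standard trace and duality arguments. The only non-mechanical ingredient is the capacity-potential identification of the limiting constant $|\Omega|/\mathcal C_\Omega$ in the second paragraph, which hinges on the elementary fact that $SL_0(S_0^{-1}1)\equiv 1$ in $\Omega$.
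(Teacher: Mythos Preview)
Your proposal is correct and follows essentially the same route as the paper's proof: both rely on the scaling estimate \eqref{eq:26} for $\|f\circ\Phi_\vep - f(y_0)\|_{H^2(\Omega)}$, the expansion of $N_z$ from Lemma~\ref{le:5}\eqref{b4}, the expansion \eqref{eq:78} for $S_{\vep z}^{-1}$, and the identity $SL_0(S_0^{-1}1)\equiv 1$ in $\Omega$ to evaluate the limiting constant; part~\eqref{e2} is handled identically via \eqref{eq:119} and \eqref{eq:101}. Your telescoping decomposition simply makes explicit what the paper compresses into a single sentence.
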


\begin{proof}
\eqref{e1} 
Since $f\in H^2_{\textrm{loc}}(\R^3)$, it follows from \eqref{eq:11} that $f$ is continuous at $y_0$.  
Similarly as in the derivation \eqref{eq:26}, we have 
\begin{align*}
    \|f \circ \Phi_{\vep}- f(y_0)\|_{H^2(\Omega)} \le C\vep^{\frac 12}\|f\|_{H^2(B_1(y_0))}.
\end{align*}
Further, it is easy to verify
\begin{align*}
\langle S_0^{-1}\gamma N_0 1, S^{-1}_01\rangle_{S_0} = \frac{1}{\mathcal C_\Omega} \int_\Omega \int_\Gamma \left(S^{-1}_01\right)(x)\frac{1}{4\pi|x-y|}d\sigma(x)dy = \frac{|\Omega|
}{\mathcal C_\Omega}.
\end{align*}
Therefore, using statement \eqref{b4} of Lemma \ref{le:5} and \eqref{eq:78} gives that \eqref{eq:117} and \eqref{eq:106} hold.

\eqref{e2} By \eqref{eq:15}, we have
\begin{align} \label{eq:119}
(f_{a,\vep}\circ\Phi_\vep)(y) = f_{a,\vep}(y_0+\vep(y-y_0)) = a\vep^2 f(y_0+\vep(y-y_0)) = a\vep^2 (f \circ \Phi_\vep)(y), \quad y\in \Omega.
\end{align}
Since 
\begin{align}
\|f\circ \Phi_\vep\|_{L^2(\Omega)} \le \vep^{-\frac 3 2}\|f\|_{L^2(\R^3)}, \quad \textrm{for any}\; f\in L^2(\R^3), \label{eq:101}
\end{align}
inequality \eqref{eq:131} follows from \eqref{eq:116}, \eqref{eq:78} and \eqref{eq:119}. 
\end{proof}

We will utilize Lemmas \ref{le:14}, \ref{le:13} and \ref{le:11} to prove Theorem \ref{th:2} and Corollary \ref{th:3}.

\begin{proof}[Proof of Theorem \ref{th:2} and Corollary \ref{th:3}]
We note that $\chi_{1,\vep} h = h$ for every $h\in L^2_{\alpha,y_0}(\R^3)$ when $\vep$ is small enough, and that
\begin{align*}
\lim_{\vep\rightarrow 0}\frac{\varepsilon \omega^2 \mathcal C_\Omega}{\omega^2_M-\omega^2-i\varepsilon\frac{\omega^3 \mathcal C_\Omega}{4\pi c_0}}
= 
\begin{cases}
0,  &\mathrm{if}\; \omega \ne \pm \omega_M, \\
\pm i\frac{4\pi c_0}{\omega_M},  &\mathrm{if}\; \omega = \pm \omega_M.
\end{cases}
\end{align*}
Here, $\chi_{1,\vep}(x): = 1$ for $x\in \R^3 \backslash \Omega_\vep$ and $\chi_{1,\vep}(x):= \vep^2$ for $x\in \Omega_\vep$.
Thus, the results of Corollary \ref{th:3} are immediately derived from statement \eqref{1} of Theorem \ref{th:2}. Therefore, our focus will be primarily on proving Theorem \ref{th:2}. Throughout the proof, we assume that $\vep>0$ is sufficiently small.

For every $g\in L^2_{\alpha}(\R^3)$, we use statement \eqref{g1} of Lemma \ref{le:14} to get
\begin{align}
\int_{\R^3}\bigg(u_{z,\vep}^{f}(x)&-v_z^{f}(x)\bigg)g(x)dx
= \vep^3 \int_{\R^3}\left(\widetilde u_{z,\vep}^{f}(x)- \widetilde v_{z,\vep}^{f}(x)\right)g(y_0+\vep(x-y_0))dx \notag\\
&= \left(\frac{1}{c^2_1} - \frac{1}{c^2_0} \right)\vep^5z^2\int_{\Omega}\widetilde u_{z,\vep}^{f}(y)\left(R_{\vep z/c_0} \left(g\circ\Phi_\vep \right)\right)(y)dy \notag\\
& + \vep^5\left(\frac{1}{c^2_1} - \frac{1}{c^2_0}\right)\int_{\Omega}\widetilde f(y)\left(R_{\vep z/c_0}\left(g\circ \Phi_\vep\right)\right)(y)dy \notag\\
& - \vep^3\int_{\Gamma} \left(\frac{\rho_0}{\rho_1\vep^2}-1\right)\partial_\nu \widetilde u_{z,\vep}^f(y)\left(R_{\vep z/c_0}\left(g\circ \Phi_\vep \right)\right)(y) d\sigma(y), \quad f = h_{a,\vep}\;\textrm{or}\; h.\label{eq:96}
\end{align}
Here, $h_{a,\vep}$ is defined in \eqref{eq:15}. Furthermore, a straightforward calculation gives 
\begin{align*}
\left(R_{\vep z/c_0}\left(g\circ \Phi_\vep\right)\right)(y)&= \int_{\R^3} \frac{e^{i\vep z|x-y|/c_0}}{4\pi|x-y|}g(y_0+\vep(x-y_0))dx \\
&= \int_{\R^3} \vep\frac{e^{iz |y_0+ \vep (x-y_0)-(y_0+\vep(y-y_0))|/c_0}}{4\pi|y_0+ \vep (x-y_0)-(y_0+\vep(y-y_0))|} g(y_0+\vep(x-y_0))dx\\
& = \frac{1}{\vep^2} \int_{\R^3} \frac{e^{iz |t-(y_0+\vep(y-y_0))|/c_0}}{4\pi|t-(y_0+\vep(y-y_0))|} g(t)dt=\frac 1{\vep^{2}}\left(\left(\Phi_{\vep}\circ R_{z/c_0}\right) g\right)(y).
\end{align*}
From this, we can apply statement \eqref{a2} of Lemma \ref{le:7} and Lemma \ref{le:12} to get
\begin{align} \label{eq:100}
\|\vep^2 \left(R_{{\vep z}/{c_0}}\left(g\circ \Phi_\vep\right)\right)(y) - \left(R_{{z}/{c_0}}g\right)(y_0)\|_{H^2(\Omega)} \le 
C_{d_{V,\max}, d_{V,\min}} \vep^{\frac 12}\|g\|_{L_{\alpha}^2(\R^3)}.
\end{align}
The rest of the proof is divided into two parts: the first part involves proving statement \eqref{1} of Theorem \ref{th:2} and the second part addresses statement \eqref{2} of Theorem \ref{th:2}.

\textbf{Part 1:}
In this part, we first prove that for every $h \in L^2_{\alpha}(\R^3)$ and $g \in L^2_{\alpha}(\R^3)$,
\begin{align} \label{eq:36}
\int_{\R^3}(u_{z,\vep}^{h_{a,\vep}}(x)-v_z^{h_{a,\vep}}(x)) g(x)dx = \frac{\vep z^2 \mathcal C_\Omega}{\omega^2_M- z^2 -i\varepsilon\frac{z^3 \mathcal C_\Omega}{4\pi c_0}}v_z^{h_{a,\vep}}(y_0)\left(R_{z/c_0} g\right)(y_0) + \textrm{Rem}_{h_{a,\vep}}
\end{align}
with 
\begin{align} \label{eq:37}
|\textrm{Rem}_{h_{a,\vep}}| \le C_{d_{V,\max}, d_{V,\min}}\frac{\vep^{3/2}}{\left|\omega^2_M-z^2-i\varepsilon\frac{z^3\mathcal C_\Omega}{4\pi c_0}\right|}\|h_{a,\vep}\|_{L^2_{\alpha}(\mathbb \R^3)} \|g\|_{L_{\alpha}^2(\R^3)}
\end{align}
holding uniformly with respect to all $z\in V$. For this aim, we distinguish between two cases $c_1 = c_0$ and $c_1 \ne c_0$. 

\textbf{Case 1:} $c_1 = c_0$. In this case, setting $f = h_{a,\vep}$ in \eqref{eq:124}, \eqref{eq:125} and \eqref{eq:94}, and using \eqref{eq:51}, \eqref{eq:122}, \eqref{eq:112}, \eqref{eq:131} and Lemma \ref{le:8}, we can estimate
\begin{align}
&\left|\left\langle \partial_\nu \widetilde v_{z,\vep}^{h_{a,\vep}}, S^{-1}_01 \right\rangle_{S_0} -{\frac{\vep^2z^2}{c_0^{2}}\langle S_0^{-1}K^{(2)} 1, S^{-1}_01\rangle_{S_0} v_{z}^{h_{a,\vep}}(y_0)}\right| \le C_{d_{V,\max}} \vep^{\frac 52}\|h_{a,\vep}\|_{L^2_{\alpha}(\mathbb R^3)}. \label{eq:133}\\
& \left\|\partial_\nu \widetilde v_{z,\vep}^{h_{a,\vep}} - \mathcal P \partial_\nu \widetilde v_{z,\vep}^{h_{a,\vep}}\right\|_{H^{-\frac 12}(\Gamma)} \le C_{d_{V,\max}} \vep^{\frac 12}\|h_{a,\vep}\|_{L^2_{\alpha}(\mathbb R^3)}. \label{eq:134}
\end{align}
By employing \eqref{eq:99}, \eqref{eq:133}, \eqref{eq:134} and Lemma \ref{le:2}, we derive that
\begin{align}\label{eq:102}
\partial_\nu \widetilde u_{z,\vep}^{h_{a,\vep}} = \frac{\rho_1\vep^2}{\rho_0}\left[\frac{z^2{c_0^{-2}}\langle S_0^{-1}K^{(2)} 1,  S^{-1}_01\rangle_{S_0} v_{z}^{h_{a,\vep}}(y_0)}{\frac{\rho_1}{\rho_0}-\frac{z^2 |\Omega|}{\mathcal C_\Omega c^2_0}-i\frac{z^3|\Omega|}{4\pi c^3_0}\vep}  S^{-1}_01 
+ Res_{0}\right]\quad \text{on}\; \Gamma,
\end{align}
where $Res_0$ satisfies
\begin{align}\label{eq:103}
\left\|Res_{0}\right\|_{{H^{-\frac12}(\Gamma)}} \le \frac{C_{d_{V,\max}}\vep^{\frac 12}}{\left|\frac{\rho_1}{\rho_0} -\frac{z^2|\Omega|}{\mathcal C_\Omega c^2_0} - i\frac{z^3|\Omega|}{4\pi c^3_0}\vep\right|} \|h_{a,\vep}\|_{L^2_{\alpha}(\mathbb R^3)}.
\end{align}
Inserting \eqref{eq:102} and \eqref{eq:103} into \eqref{eq:96}, and using \eqref{eq:100} and Lemma \ref{le:6}, we obtain that \eqref{eq:36} and \eqref{eq:37} hold for the case when $c_1 = c_0$.

\textbf{Case 2: $c_1 \ne c_0$.} 
Subtracting $(1-c^2_1/c^2_0)\mathcal P\partial_\nu \widetilde u^{f}_{z,\vep}$ on both sides of \eqref{eq:99} and setting $f = h_{a,\vep}$, we have
\begin{align} \label{eq:38}
\frac{\rho_0}{\rho_1\vep^2}\left(\Lambda_{\vep z,\vep}^{(2)} + \vep^2 \zeta \mathcal P\right)\partial_\nu \widetilde u^{h_{a,\vep}}_{z,\vep} & = \partial_\nu \widetilde v^{h_{a,\vep}}_{z,\vep} + \left(\frac{1}{c^2_1}-\frac{1}{c^2_0}\right){\vep^2 z^2}\partial_\nu N_{\vep z/c_0} \widetilde u^{h_{a,\vep}}_{z,\vep} + \left(\frac{c^2_1}{c^2_0}-1\right)\mathcal P\partial_\nu \widetilde u^{h_{a,\vep}}_{z,\vep} \notag \\
& + \vep^2\left(\frac{1}{c^2_1} - \frac{1}{c^2_0}\right) \partial_\nu N_{\vep z/c_0} \widetilde h_{a,\vep} =: q_{z,\vep},
\end{align}
where $\zeta:=\rho_1\left(c^2_1/c^2_0-1\right)/\rho_0$. Setting $f= h_{a,\vep}$ in 
\eqref{eq:90} and \eqref{eq:105}, and applying \eqref{eq:116}, we get
\begin{align*}
&\left|\left\langle q_{z,\vep} - \partial_\nu \widetilde v_{z,\vep}^{h_{a,\vep}}, S^{-1}_01\right\rangle_{S_0}\right| \le C_{d_{V,\max}} \vep^4\|\widetilde u^{h_{a,\vep}}_{z,\vep}\|_{L^2(\Omega)},\\
&\left\|\left(\mathbb I - \mathcal P \right)\left(q_{z,\vep} - \partial_\nu \widetilde v_{z,\vep}^{h_{a,\vep}}\right)\right\|_{H^{-\frac12}(\Gamma)} \le C_{d_{V,\max}} \left(\vep^2 \|\widetilde h_{a,\vep}\|_{L^2(\Omega)} + \vep^4\|\widetilde u^{h_{a,\vep}}_{z,\vep}\|_{L^2(\Omega)}\right).
\end{align*}
From this, with the aid of \eqref{eq:119}, \eqref{eq:101}, \eqref{eq:133}, \eqref{eq:134} and \eqref{eq:38} and Lemma \ref{le:2}, we arrive at 
\begin{align}\label{eq:97}
\partial_\nu \widetilde u^{h_{a,\vep}}_{z,\vep} = \frac{\rho_1\vep^2}{\rho_0} \left[\frac{z^2c_0^{-2} \langle S_0^{-1}K^{(2)} 1, S^{-1}_01\rangle_{S_0} v_{z}^{h_{a,\vep}}(y_0)}{\zeta + \frac{\rho_1}{\rho_0}-\frac{z^2|\Omega|}{\mathcal C_\Omega c^2_0}-i\frac{z^3|\Omega|}{4\pi c^3_0}\vep} S^{-1}_01 + Res_1\right],
\end{align}
where $Res_1$ satisfies 
\begin{align} \label{eq:98}
\left\|Res_{1}\right\|_{{H^{-\frac12}(\Gamma)}} \le \frac{C_{d_{V_{\max}}}\left(\vep^{\frac 12}\left\|h_{a,\vep}\right\|_{L^2_{\alpha
}(\R^3)} + \vep^2 \|\widetilde u^{h_{a,\vep}}_{z,\vep}\|_{L^2(\Omega)}\right)}{\left|\zeta + \frac{\rho_1}{\rho_0} -\frac{z^2|\Omega|}{\mathcal C_\Omega c^2_0} - i\frac{z^3|\Omega|}{4\pi c^3_0}\vep\right|} .
\end{align}
Furthermore, by following the same procedure as the derivation of \eqref{eq:30}, we can use \eqref{eq:87}, \eqref{eq:97}, \eqref{eq:98} and statement \eqref{f2} of Lemma \ref{le:8} to get the estimate of $\widetilde u^{h_{a,\vep}}_{z, \vep}$ in $\Omega$, that is,
\begin{align*}
\|\widetilde u^{h_{a,\vep}}_{z,\vep}\|_{L^2{(\Omega)}} \le \frac{C_{d_{V,\max}}\left(\left|v_{z}^{h_{a,\vep}}(y_0)\right|+ \vep^{\frac 12}\left\|h_{a,\vep}\right\|_{L^2_{\alpha
}(\R^3)}\right)}{\left|\zeta + \frac{\rho_1}{\rho_0} -\frac{z^2|\Omega|}{\mathcal C_\Omega c^2_0} - i\frac{z^3|\Omega|}{4\pi c^3_0}\vep\right|}.
\end{align*}
Building upon the estimates of $\partial_\nu \widetilde u_{z,\vep}^{h_{a,\vep}}$ and $\widetilde u_{z,\vep}^{h_{a,\vep}}$ on $\Gamma$, we can utilize \eqref{eq:125}, \eqref{eq:101}, \eqref{eq:96}, \eqref{eq:100} and Lemma \ref{le:6} to obtain \eqref{eq:36} and \eqref{eq:37} for the case when $c_1 \ne c_0$.

Therefore, we obtain that equation \eqref{eq:36} holds with the remainder term $Rem$ satisfying \eqref{eq:37} uniformly with respect to all $z\in V$. This, together with the fact that $\|h_{a,\vep}\|_{L^2_{\alpha}(\mathbb \R^3)} \le \max(1,a)\|h\|_{L^2_{\alpha}(\mathbb \R^3)} $ directly implies
\begin{align}
\bigg\|\left(R^H_{\rho_\vep,k_\vep}(z)h_{a,\vep}\right)(x) &-\left(R^H_{\rho_0,k_0}(z) h_{a,\vep}\right)(x)\notag \\
&- \frac{\varepsilon z^2\mathcal C_\Omega}{\omega^2_M-z^2-i\varepsilon\frac{z^3 \mathcal C_\Omega}{4\pi c_0}}\left(R^H_{\rho_0,k_0}(z)h_{a,\vep}\right)(y_0)\frac{e^{iz|x-y_0|/c_0}}{4\pi|x-y_0|}\bigg\|_{L_{-\alpha}^2(\R^3)} \notag\\
& \le C_{d_{V,\max}, d_{V,\min}}\frac{\vep^{3/2}}{\left|\omega^2_M-z^2-i\varepsilon\frac{z^3\mathcal C_\Omega}{4\pi c_0}\right|}\|h\|_{L^2_{\alpha}(\mathbb \R^3)} \|g\|_{L_{\alpha}^2(\R^3)}. \label{eq:126}
\end{align}
Note that $\chi_{a,\vep} h = h_{a,\vep}$. From \eqref{eq:126} and Lemma \ref{le:13}, we conclude that the assertion of statement \eqref{1} of Theorem \ref{th:2} holds.

\textbf{Part 2:}
In this part, we assume that $h\in L^2_{\alpha}(\R^3)\cap H^2_{\textrm{loc}}(\R^3)$. Setting $f = h$ in \eqref{eq:124}, \eqref{eq:125} and \eqref{eq:94}, and utilizing \eqref{eq:51}, \eqref{eq:122}, \eqref{eq:11}, \eqref{eq:112}, Lemma \ref{le:8} and statement \eqref{e1} of Lemma \ref{le:11}, we have 
\begin{align}
&\left|\left\langle \partial_\nu \widetilde v_{z,\vep}^{h}, S^{-1}_01 \right\rangle_{S_0} -{\frac{\vep^2z^2}{c_0^{2}}\langle S_0^{-1}K^{(2)} 1, S^{-1}_01\rangle_{S_0} v_{z}^{h}(y_0)} + \vep^2 c_0^{-2}\mathcal C_{\Omega}^{-1}|\Omega|h(y_0)\right| \notag\\
&\qquad \qquad \qquad \qquad \qquad \qquad \qquad\;\; \le C_{d_{V,\max}}\vep^{\frac 52}\left(\|h\|_{L^2_{\alpha}(\mathbb \R^3)} + \|h\|_{H^2(B_1(y_0))}\right)\label{eq:138}
\end{align}
and 
\begin{align}
\left\|\partial_\nu \widetilde v_{z,\vep}^{h} - \mathcal P \partial_\nu \widetilde v_{z,\vep}^{h}\right\|_{H^{-\frac 12}(\Gamma)} \le C_{d_{V,\max}} \vep^{\frac 12}\left(\|h\|_{L^2_{\alpha}(\mathbb \R^3)} + \|h\|_{H^2(B_1(y_0))}\right). \label{eq:139}
\end{align}
To prove statement \eqref{2} of Theorem \ref{th:2}, it suffices to prove that for every $h\in L^2_{\alpha}(\R^3)\cap H^2_{\textrm{loc}}(\R^3)$ and $g\in L^2_{\alpha}(\R^3)$,
\begin{align}\label{eq:136}
\int_{\R^3}(u_{z,\vep}^{h}(x)-v_z^{h}(x)) g(x)dx & = \frac{\vep \mathcal C_\Omega}{\omega^2_M- z^2 -i\varepsilon\frac{z^3 \mathcal C_\Omega}{4\pi c_0}}\left( z^2v_z^{h}(y_0) + h(y_0)\right)\left(R_{z/c_0} g\right)(y_0)\quad \notag\\ 
& \qquad \qquad \qquad \qquad\qquad \qquad\qquad \qquad\qquad \qquad+ \text{Rem}_h
\end{align}
with
\begin{align}\label{eq:135}
|\textrm{Rem}_h| \le C_{d_{V,\max}, d_{V,\min}}\frac{\vep^{3/2}}{\left|\omega^2_M-z^2-i\varepsilon\frac{z^3\mathcal C_\Omega}{4\pi c_0}\right|}\left(\|h\|_{L^2_{\alpha}(\mathbb \R^3)} + \|h\|_{H^2(B_1(y_0))}\right)\|g\|_{L_{\alpha}^2(\R^3)}
\end{align}
holding uniformly with respect to all $z\in V$. 
In fact, using the same arguments as in the derivation of \eqref{eq:36} and \eqref{eq:37}  we can obtain that \eqref{eq:136} holds with the remainder term satisfying \eqref{eq:135} uniformly with respect to all $z \in V$. The key difference is that \eqref{eq:138} and \eqref{eq:139} 
serve as analogues of \eqref{eq:133} and \eqref{eq:134}, respectively.
\end{proof}

\section{Minnaert resonance as a pole of the scaled Hamiltonian}\label{app}
This section is devoted to establishing the relationship between the Minnaert frequency and the scattering resonances.

We begin by introducing an alternative definition of scattering resonances.
\begin{definition} \label{d1}
For each $\vep>0$ and $z \in \CC$, we denote
\begin{align*}
A_{\Omega_\vep,\vep}(z):=\begin{bmatrix}
&\mathbb I - \left(\frac{1}{c^2_1}-\frac{1}{c^2_0}\right)z^2~ N_{\Omega_\vep, z/c_0} & \left(\frac{\rho_0}{\rho_1\vep^2}-1\right) SL_{\Gamma_\vep, z/c_0}\\
&\left(\frac{1}{c^2_0}-\frac{1}{c^2_1}\right)z^2 \partial_\nu N_{\Omega_\vep, z/c_0} &\quad\frac{\rho_0}{\rho_1\vep^2} \left(\frac 12\left(1 + \frac {\rho_1\vep^2} {\rho_0}\right) \mathbb I + \left(1-\frac {\rho_1\vep^2}{\rho_0}\right)K_{\Gamma_\vep, z/c_0}^*\right)
\end{bmatrix}
\end{align*}
as a linear bounded operator from $L^2(\Omega_\vep)\times L^{2}(\Gamma_\vep)$ into itself.
We call $z$ a scattering resonance of the Hamiltonian $H_{\rho_\vep,k_\vep}$ for each fixed $\vep$ if the operator $A_{\Omega_\vep,\vep}(z)$ is not injective.
\end{definition}

\begin{remark} \label{re:2}
From Definition \ref{d1}, it can be seen that $z \in \mathbb C \backslash \{0\}$ is a scattering resonance if and only if there exists $(\phi_{z}, \psi_{z}) \in L^2(\Omega) \times L^2(\Gamma)$ such that 
\begin{align}
\mathcal A_{\Omega_\vep,\vep}(z) 
\begin{bmatrix}
\phi_{z}\\
\psi_{z} 
\end{bmatrix} = \begin{bmatrix}
0\\
0
\end{bmatrix}. \notag
\end{align}
Setting 
\begin{align}\label{eq:50}
u_\vep:= \left(\frac 1{c_1^{2}} - \frac{1}{c_0^{2}}\right)z^2 N_{\Omega_\vep, z/c_0} \phi_z - \left(\frac{\rho_0}{\rho_1\vep^2} - 1 \right) SL_{\Gamma_\vep, z/c_0} \psi_z, \quad \mathrm{in}\; \R^3 \backslash \Gamma_\vep.
\end{align}
Clearly,
\begin{align} \label{eq:48}
H_{\rho_\vep,k_\vep} u_\vep - z^2 u_\vep = 0\quad \mathrm{in}\; H_{\textrm{loc}}^2(\R^3 \backslash \Gamma_\vep) \cap H_{\textrm{loc}}^1(\R^3),
\end{align}
i.e., that 
\begin{align*}
&\Delta u_{\vep} + z^2c_0^{-2} u_{\vep} = 0 \quad {\rm{in}}\; \R^3 \backslash \Omega_\vep,\\
&\Delta u_{\vep} + z^2 c_1^{-2} u_{\vep} = 0 \quad {\rm{in}}\; \Omega_\vep,\\
& u^+_{\vep} = u^-_\vep, \quad \partial^+_{\nu} u_{\vep} = \frac{\rho_0}{\rho_1\vep^2}\partial^-_{\nu} u_{\vep} \quad {\rm{on}}\; \Gamma_\vep.
\end{align*}
Furthermore, it is easy to verify that $(1-\chi)SL_{\Gamma_\vep, z/c_0} \psi_z = R_{z/c_0} [\Delta, \chi] SL_{\Gamma_\vep, z/c_0} \psi_z$, where $\chi \in C_c^{\infty}(\R^3)$ satisfies $\chi = 1 $ in $B_r$. Here, $r>0$, independent of $\vep$, is chosen such that $\overline{\Omega_\vep} \subset B_r$. Therefore, we readily obtain that $u_\vep$, as specified in \eqref{eq:50}, is $z/c_0-$ outgoing. Here, we say that $u$ is $z-$ outgoing if there exist $g\in L_{\textrm{comp}}^2(\R^3)$ and $r>0$ such that $u = R_z g$ outside $B_r$, where $R_z$ is given by \eqref{eq:69}. We refer to \cite[Definition 3.32]{DM} and \cite{J19} for such a definition. For $z \in \overline{C_+}$, $z-$ outgoing field is the one that satisfies the classical outgoing radiation condition. For $z\in \mathbb C_-$, ``outgoing" refers to the analytic continuation of the classical outgoing solution from $\overline{\mathbb C_+}$ into $\mathbb C_-$ via $R_z$.
\end{remark}

Interestingly, Definition \ref{d1} is equivalent to Definition \ref{d2}. Before proving this equivalence, we state a lemma, which is a special case of Theorem 4.9 in \cite{DM}.

\begin{lemma} \label{le:4} 
Given $\vep>0$, $\lambda_\vep \in \mathbb C \backslash \{0\}$ is a scattering resonance of the Hamiltonian $H_{\rho_\vep,k_\vep}$ if and only if there exists $u_{\lambda_\vep} \in \mathcal D_{\mathrm{loc}}$ satisfying 
\begin{align*} 
&\Delta u_{\lambda_\vep} + \lambda_\vep^2c_0^{-2} u_{\lambda_\vep} = 0 \quad {\rm{in}}\; \R^3 \backslash \Omega_\vep, \\
&\Delta u_{\lambda_\vep} + \lambda_\vep^2 c_1^{-2} u_{\lambda_\vep} = 0 \quad {\rm{in}}\; \Omega_\vep,\\
& u^+_{\lambda_\vep} =  u^-_{\lambda_\vep}, \quad \partial^+_{\nu} u_{\lambda_\vep} = \frac{\rho_0}{\rho_1\vep^2}\partial^-_{\nu} u_{\lambda_\vep} \quad {\rm{on}}\; \Gamma_\vep, \\
& u_{\lambda_\vep}\; {\rm{is}}\; \lambda_\vep/c_0-{\rm{outgoing}}.
\end{align*}
\end{lemma}
Lemma 5.1 implies that if $\lambda_\vep$ is a resonance of the Hamiltonian $H_{\rho_\vep, k_\vep}$, then $-\overline{\lambda_\vep}$ is also.

\subsection{Equivalence of the two definitions}

Now we provide a proof to the equivalence of Definition \ref{d2} and  Definition \ref{d1}.  

\begin{proof}[Equivalence of Definition \ref{d1} and Definition \ref{d2}]
When $z=0$, due to the fact that $\big(1/2(1 + {\rho_1\vep^2}/{\rho_0})$ $\mathbb I + (1-{\rho_1\vep^2}/{\rho_0})K_{\Gamma_\vep, 0}^*\big)$ is invertible in $\mathcal L (L^2(\Gamma_\vep))$ for each fixed $\vep>0$, we easily find that $0$ is not a scattering resonance in the sense of Definition \ref{d1}. Furthermore, proceeding similarly to \cite[Theorem 4.19]{DM}, we readily obtain that $0$ is not a scattering resonance in the sense of Definition \ref{d2}. In what follows, we focus on the case of the nonzero resonances.

We begin by proving that any scattering resonance $z$ as defined in Definition \ref{d1} is a pole of the meromorphic extension of the $R^H_{\rho_\vep,k_\vep}(z)$. When $z$ is a point where $A_{\Omega_\vep,\vep}(z)$ fails to be injective, it follows from Remark \ref{re:2} that there exists $u_z$ that satisfies \eqref{eq:48} and is $z/c_0-$ outgoing. This, together with Lemma \ref{le:4} yields that $z$ is a pole of the meromorphic extension of the $R^H_{\rho_\vep,k_\vep}(z)$.

Conversely, since $H_{\rho_\vep,k_\vep}$ represents a type of black box Hamiltonian, when $z$ is identified as a pole of the meromorphic extension of the $R^H_{\rho_\vep,k_\vep}(z)$, Lemma \ref{le:4} implies that each resonance state $v_z$ corresponding to $z$ satisfies $H_{\rho_\vep,k_\vep} v_z - z^2 v_z = 0$ in $H_{\textrm{loc}}^2(\R^3 \backslash \Gamma_\vep) \cap H_{\textrm{loc}}^1(\R^3)$ and there exist $g\in L_{\textrm{comp}}^2(\R^3)$ and $R>0$ such that $v_z = R_z g$ outside $B_R$. Next, we prove that $(v_z|_{\Omega_\vep}, \partial_\nu v_z |_{\Gamma_\vep})$ solves 
\begin{align} \label{eq:123}
A_{\Omega_\vep,\vep}(z) 
\begin{bmatrix}
u_z|_{\Omega_\vep}\\
\partial_\nu u_z|{\Gamma_\vep}
\end{bmatrix}  = 0.
\end{align}
Observe that 
\begin{align}\label{eq:132}
\int_{\partial B_R} \frac{e^{i z|p-x|}}{|p-x|} \frac{\partial}{\partial{\nu (x)}}\int_{\textrm{supp}(g)}\frac{e^{iz|x-y|}}{|x-y|}g(y)dy - \int_{\textrm{supp}(g)}\frac{e^{iz|x-y|}}{|x-y|}g(y) dy\frac{\partial }{\partial{\nu (x)}}\frac{e^{i z|p-x|}}{|p-x|} d\sigma(x)= 0
\end{align}
for any $z \in \overline{\CC_+} \backslash \{0\}$ and $p \in \R^3 \backslash \overline{B_R}$. Here, $\textrm{supp}(g)$ denotes the compact support of $g$.
By analyticity of the functions in \eqref{eq:132} with respect to $z$, it can be deduced that \eqref{eq:132} holds for all $z \in \CC$ and $p \in \R^3 \backslash \overline{B_R}$. This, together with Green formulas directly yields that $(v_z|_{\Omega_\vep}, \partial_\nu v_z |_{\Gamma_\vep})$ solves \eqref{eq:123}.
Consequently, $z$ is point where $A_{\Omega_\vep,\vep}(z)$ fails to be injective.
\end{proof}

\subsection{Asymptotics of Minnaert resonances}

Now we are ready to investigate the asymptotic properties of the resonances. 

\begin{lemma} \label{le:a1}
Let $\vep>0$. The following properties hold true.
\begin{enumerate}[(a)] 

\item \label{z1} Suppose that $\vep > 0$ is sufficiently small. There exists a continuous curve $\vep\rightarrow z(\vep)\in \CC$ such that $Q_{\vep}(z(\vep))$ is not injective and $\lim_{\vep\rightarrow 0}z(\vep) = 0$. Here, $Q_{\vep}(z)$ is defined by
\begin{align*}
Q_{\vep}(z):=\begin{bmatrix}
&\mathbb I - \left(\frac{1}{c^2_1}-\frac{1}{c^2_0}\right)z^2~ N_{z/c_0} & \quad \left(\frac{\rho_0}{\rho_1\vep^2}-1\right) SL_{z/c_0}\\
\\
&\left(\frac{1}{c^2_0}-\frac{1}{c^2_1}\right)z^2 \partial_\nu N_{z/c_0} & \frac{\rho_0}{\rho_1\vep^2}\left(\frac 12\left(1 + \frac{\rho_1 \vep^2}{\rho_0}\right) \mathbb I + \left(1-\frac{\rho_1 \vep^2}{\rho_0}\right)K_{z/c_0}^*\right)
\end{bmatrix}.
\end{align*}
\item \label{z2} For any compact set $V\subset \CC$ containing $\pm \omega_M$, there exists $\eta >0$ such that when $\vep\in (0,\eta)$, $R^H_{\rho_\vep,k_\vep}(z)$ exhibits two unique scattering resonances $z_{\pm}(\vep)$ in $V$, called the Minnaert resonances, that satisfy
\begin{align} \label{eq:27} 
&z_{\pm}(\vep) = \pm \omega_M -i\frac{\omega^2_M \mathcal C_\Omega }{8\pi c_0}\vep + z_{\pm,res}(\vep),
\end{align}
where 
\begin{align}\label{eq:32}
|z_{\pm,res}(\vep)| \le C\vep^2, \quad \textrm{as} \; \vep \rightarrow 0.
\end{align}
Here, $\omega_M$ is a Minnaert frequency as defined in \eqref{eq:45}, $\mathcal C_\Omega$ is capacitance of $\Omega$ as defined in \eqref{eq:74}, and $C$ is a positive constant independent of $\vep$.
\end{enumerate}
\end{lemma}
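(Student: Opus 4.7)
Proof plan. Both parts reduce to analyzing a single scalar equation obtained by the spectral decomposition of $K_0^*$ introduced in Section 3.1, and then applying the implicit function theorem together with Rouché's theorem.

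Part (a). I would start from $Q_\vep(z)(u,\phi)=0$ and, for $|z|$ sufficiently small, use a Neumann series based on Lemma \ref{le:8}(b) to invert $\mathbb I-(c_1^{-2}-c_0^{-2})z^2 N_{z/c_0}$ and solve the bulk equation for $u$ in terms of $\phi$. Substituting into the boundary equation produces a single equation for $\phi\in H^{-1/2}(\Gamma)$. Writing $\phi=a\,S_0^{-1}1+\phi_r$ with $\phi_r\in\mathrm{Span}\{S_0^{-1}1\}^\perp$ and projecting onto these two subspaces with respect to $\langle\cdot,\cdot\rangle_{S_0}$, the invertibility of $\tfrac12+K_0^*$ on $\mathrm{Span}\{S_0^{-1}1\}^\perp$ (see \eqref{eq:77}) allows $\phi_r$ to be eliminated in favour of $a$ by a second Neumann series, exactly as in the proof of Lemma \ref{le:2}. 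What remains is a scalar equation whose leading part, computed from Lemmas \ref{le:5} and \ref{le:6}, is
\[
\mathcal F(z,\vep) = \frac{\rho_1\vep^2}{\rho_0} - \frac{z^2 |\Omega|}{c_1^2 \mathcal C_\Omega} + O\!\left(z^3+\vep^2 z^2\right)=0.
\]
Setting $z=\vep w$ and dividing by $\vep^2$ converts this into $\omega_M^2-w^2+O(\vep)=0$, and the simple root $w=\omega_M$ at $\vep=0$ persists as a continuous curve $w=w(\vep)$ by the implicit function theorem. Then $z(\vep):=\vep w(\vep)\to 0$ is the curve claimed.

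Part (b). By Definition \ref{d1}, a resonance $z^*$ of $R^H_{\rho_\vep,k_\vep}$ is precisely a point of non-injectivity of $A_{\Omega_\vep,\vep}(z^*)$. Under the scaling $x=y_0+\vep(y-y_0)$ this is equivalent to non-injectivity of a scaled operator $\tilde A_\vep(z^*)$ on $L^2(\Omega)\times L^2(\Gamma)$ whose integral operators have argument $\vep z^*/c_0$. Running the same spectral reduction as in part (a), but now expanding in the small parameter $\vep z^*/c_0$ (so reusing Lemmas \ref{le:2}, \ref{le:5}, \ref{le:6} exactly as in the proofs of Theorems \ref{th:2} and \ref{th:6}), the resonance condition reduces to
\[
\tilde{\mathcal F}(z^*,\vep) := \omega_M^2-(z^*)^2-i\vep\frac{(z^*)^3\mathcal C_\Omega}{4\pi c_0}+O(\vep^2)=0,
\]
whose leading part is the denominator of the singular term in Theorem \ref{th:6}. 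At $\vep=0$ the two roots $z^*=\pm\omega_M$ are simple because $\partial_{z^*}\tilde{\mathcal F}|_{(\pm\omega_M,0)}=\mp 2\omega_M\ne 0$, so the implicit function theorem furnishes unique curves $z^*_\pm(\vep)$ with $z^*_\pm(0)=\pm\omega_M$ and
\[
\frac{dz^*_\pm}{d\vep}\bigg|_{0}=-\frac{\partial_\vep\tilde{\mathcal F}}{\partial_{z^*}\tilde{\mathcal F}}\bigg|_{(\pm\omega_M,0)}=-\frac{i\omega_M^2\mathcal C_\Omega}{8\pi c_0},
\]
yielding the expansion \eqref{eq:27}--\eqref{eq:32}. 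Uniqueness inside the compact set $V$ would then follow from Rouché's theorem: on small disks around $\pm\omega_M$ there is exactly one root each by IFT, while on the complement of these disks inside $V$ the unperturbed $\tilde{\mathcal F}(\cdot,0)=\omega_M^2-(\cdot)^2$ is uniformly bounded below, so by Lemma \ref{le:1} (applied to the denominator) the perturbed $\tilde{\mathcal F}(\cdot,\vep)$ has no zero there for $\vep$ small.

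The hard part will be running the spectral reduction uniformly for $z^*$ in a complex neighbourhood of $\pm\omega_M$ that extends into $\CC_-$ where the resonances actually live, rather than just in the upper half-plane where Theorems \ref{th:2} and \ref{th:6} are stated. Since each of $S_z,K_z,K_z^*,N_z,SL_z$ is entire in $z$, the algebraic manipulations carry over verbatim to all of $\CC$; the only invertibility required outside $\overline{\CC_+}$ is that of $\mathbb I-(c_1^{-2}-c_0^{-2})(\vep z^*)^2 N_{\vep z^*/c_0}$, which is immediate by a Neumann series once $V$ is fixed since the perturbation from the identity has norm $O(\vep^2)$ uniformly in $z^*\in V$.
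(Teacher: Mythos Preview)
Your overall strategy is sound and in fact somewhat more elementary than the paper's. Both you and the paper reduce the injectivity question to a scalar characteristic equation via the spectral splitting $\phi = a\,S_0^{-1}1+\phi_r$, and both arrive at the same leading-order equation $\omega_M^2-(z^*)^2-i\vep(z^*)^3\mathcal C_\Omega/(4\pi c_0)+O(\vep^2)=0$. The difference is in the machinery used around this reduction. For part~(a) the paper does \emph{not} pass to a scalar equation at all: it writes $\widetilde Q_\vep(z)=E(z)+\vep^2H(z)$, shows $E(z)$ is invertible on a punctured disk with a null multiplicity~$2$ kernel at $z=0$, and then invokes Gohberg--Sigal perturbation theory (Theorem~1.15 in \cite{AK09}) to obtain the curve $z(\vep)$ and, crucially, the global multiplicity count $\mathcal M(Q_\vep;B_{\mathbb C,\eta_3})=2$. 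For part~(b) the paper does the same scalar reduction as you, but solves by a power-series ansatz and gets uniqueness in $V$ for free from the multiplicity count carried over from~(a) via $A_\vep(z)=Q_\vep(\vep z)$. Your route (IFT near $\pm\omega_M$, Rouch\'e on the complement) avoids Gohberg--Sigal entirely and is self-contained; the paper's route is more operator-theoretic and gives the multiplicity statement as a byproduct.

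Two small points. First, your invocation of Lemma~\ref{le:1} for the no-zero argument on $V\setminus\{\text{disks}\}$ is misplaced: that lemma is only stated for $z\in\overline{\CC_+}\setminus\{0\}$, whereas the resonances and part of $V$ lie in $\CC_-$. What you actually need (and essentially say) is the trivial bound $|\omega_M^2-z^2|\ge c>0$ on the complement of the small disks, which with the $O(\vep)$ perturbation is enough for Rouch\'e. Second, your Rouch\'e argument counts zeros of the scalar function $\tilde{\mathcal F}$; to conclude uniqueness of resonances of the \emph{operator} you must also check that the reduction to $\tilde{\mathcal F}$ is an equivalence, i.e.\ that for every $z^*\in V$ and small $\vep$ the two Neumann-series eliminations (of $u$ and of $\phi_r$) are invertible. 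You note this in your last paragraph for the bulk equation, and it holds equally for the $\phi_r$ step since $\tfrac12+K_0^*$ is invertible on $\mathrm{Span}\{S_0^{-1}1\}^\perp$ and the perturbation is $O(\vep^2|z^*|^2)$ uniformly on $V$; but it should be stated explicitly.
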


\begin{proof}
First, we prove statement \eqref{z1}. We observe that
\begin{align*}
Q_{\vep}(z)\begin{pmatrix}
\phi\\
\psi
\end{pmatrix} = \widetilde Q_{\vep}(z) \begin{pmatrix}
\phi\\
\vep^{-2}\psi
\end{pmatrix}, \quad \textrm{for}\; (\phi,\psi) \in \mathcal L \left(L^2(\Omega)\times L^{2}(\Gamma)\right).
\end{align*}
Here, $\widetilde Q_{\vep}(z)$ is defined by
\begin{align*}
\widetilde Q_{\vep}(z):=\begin{bmatrix}
&\mathbb I - \left(\frac{1}{c^2_1}-\frac{1}{c^2_0}\right)z^2~ N_{z/c_0}   & \left(\frac{\rho_0}{\rho_1}-\vep^2\right) SL_{z/c_0}\\
& \left(\frac{1}{c^2_0}-\frac{1}{c^2_1}\right)z^2 \partial_\nu N_{z/c_0} & \quad \frac{\rho_0}{\rho_1}\left(\frac 12\left(1 + \frac{\rho_1 \vep^2}{\rho_0}\right) \mathbb I + \left(1-\frac{\rho_1 \vep^2}{\rho_0}\right)K_{z/c_0}^*\right)
\end{bmatrix}.
\end{align*}
Thus, $Q_{\vep}(z)$ and $\widetilde Q_{\vep}(z)$ share points where they are not injective. Clearly, $\widetilde Q_{\vep}(z)$ can be rewritten as
\begin{align*}
\widetilde Q_{\vep}(z) & = \begin{bmatrix}
&\mathbb I - \left(\frac{1}{c^2_1}-\frac{1}{c^2_0}\right)z^2~ N_{z/c_0} & \frac{\rho_0}{\rho_1}SL_{z/c_0}\\
&\left(\frac{1}{c^2_0}-\frac{1}{c^2_1}\right)z^2 \partial_\nu N_{z/c_0} & \qquad \frac{\rho_0}{\rho_1}\left(\frac{1}{2} + K_{z/c_0}^*\right)
\end{bmatrix} + \vep^2 \begin{bmatrix}
&0 & -SL_{z/c_0}\\
&0 & \qquad\frac{1}{2} - K_{z/c_0}^*
\end{bmatrix}\\
&=: E(z) + \vep^2 H(z).
\end{align*}
Our next aim is to demonstrate that $\widetilde Q_{\vep}(z)$ exhibits similar injective properties to those of $E(z)$ by using Gohberg-Sigal's theory (see, e.g., \cite[Theorem 1.15]{AK09}). We note that by statement \eqref{b4} of Lemma \ref{le:5} there exists  $\eta_0 \in (0,1)$ such that 
\begin{align} \label{eq:34} 
\left\|\left(\mathbb I - \left(\frac{1}{c^2_1}-\frac{1}{c^2_0}\right)z^2~ N_{z/c_0}\right)^{-1}-\mathbb I\right\|_{\mathcal L(L^2(\Omega))} \le C|z|^2, \quad z \in B_{\CC,\eta_0},
\end{align}
where $B_{\CC,s}:=\{z\in \CC: |z|<s\}$ for any $s\in \R_+$. Throughout the proof, $C$ is a positive constant independent of $z$ and $\vep$.
Therefore, for investigating the invertibity of $E(z)$, it suffices to prove that the Schur complement of $\mathbb I - \left({c^{-2}_1}- c^{-2}_0\right)z^2~ N_{z/c_0}$, defined by 
\begin{align*}
\mathbb M(z):= \frac{\rho_0}{\rho_1}\left(\frac{1}{2} + K_{z/c_0}^* - \left(\frac{1}{c^2_0}-\frac{1}{c^2_1}\right)z^2 \partial_\nu N_{z/c_0}\left(\mathbb I - \left(\frac{1}{c^2_1}-\frac{1}{c^2_0}\right)z^2~ N_{z/c_0}\right)^{-1} SL_{z/c_0}\right)
\end{align*}
is invertible in $\mathcal L(L^2(\Gamma))$. Recall that for every $\psi \in H^{-1/2}(\Gamma)$ can be represented by $\psi = \mathcal P\psi + (\mathbb I -\mathcal P)\psi =: \mathcal P \psi + \mathcal Q\psi$, where the operator $\mathcal P$ is defined in \eqref{eq:71}. Clearly, operators $\mathcal P$ and $\mathcal Q$ belong to $\mathcal L(L^2(\Gamma))$. Using the similar arguments as employed in the derivation of \eqref{eq:86}, we have 
\begin{align*}
\langle \partial_\nu N_{z/c_0} SL_{z/c_0}\phi, S^{-1}_01 \rangle_{S_0} = -\frac{z^2}{\mathcal C_\Omega c^2_0}\int_\Omega \left(N_{z/c_0} SL_{z/c_0} \phi\right) (y)dy + \frac{c^2_0}{z^2}\left\langle \frac{1}{2}\phi+ K_{z/c_0}^* \phi, S^{-1}_01 \right\rangle_{S_0},
\end{align*}
for any $\phi \in L^2(\Gamma)$. From this, \eqref{eq:34}, the identity $\mathcal P + \mathcal Q = \mathbb I$, and statement \eqref{f2} of Lemma \ref{le:8}, we can rewrite $\mathbb M(z)$ as
\begin{align}
\mathbb M(z) = \frac{\rho_0}{\rho_1}\bigg[\left(\left({c_0^2}{c_1^{-2}}-1\right) \mathcal P + \mathbb I\right)\left(\frac{1}{2} + K_{z/c_0}^*\right)
+ \left(\frac{1}{c^2_1}-\frac{1}{c^2_0}\right)\left(z^2 \mathcal Q \partial_\nu N_{z/c_0}SL_{z/c_0}\right) + \mathbb M_{Res}(z)\bigg], \label{eq:147}
\end{align}
where 
\begin{align*}
\|M_{Res}(z)\|_{\mathcal L(L^2(\Gamma))}\le C|z|^4, \quad |z|\in B_{\mathbb C,\eta_0}.
\end{align*}
Using statement \eqref{b2} of Lemma \ref{le:5} and the identities $\left(1/2 \mathbb I +K^*_0\right) \mathcal P = 0$, $\mathcal P\left(1/2 \mathbb I +K^*_0\right) \mathcal Q = 0$ and $\mathbb I = \mathcal P + \mathcal Q$, we have that for each $z \in B_{\CC,\eta_0}$,
\begin{align*}
\bigg\|\frac 12 \mathbb I+ K^{*}_{z/c_0} - \mathcal P \frac{z^2}{ c^2_0}K^{*,(2)}\left(\mathcal P + \mathcal Q\right) - \mathcal Q \bigg[\left(\frac{1}{2} + K^{*}_0\right) \mathcal Q + \frac{z^2}{ c^2_0}K^{*,(2)}\bigg]\bigg\|_{\mathcal L (L^2(\Gamma))} \le C|z|^3.
\end{align*}
This, together with \eqref{eq:91}, \eqref{eq:147} and the fact that $\mathcal Q(1/2 \mathbb I+K_0)\mathcal Q$ is invertible in $\mathcal L(\mathcal Q(L^2(\Gamma)))$ yields that there exists $\eta_1 \in (0,1)$ such that 
\begin{align*}
\left\|\mathbb M(z)^{-1}\right\|_{\mathcal L(L^2(\Gamma))} \le \frac{C}{|z|^2}, \quad z\in B_{\CC,\eta_1} \backslash \{0\}.
\end{align*} 
Based on the above discussions, it can be deduced that there exists $\eta_2 \in (0,1)$ and $\vep_{\eta_2}>0$ depending on $\eta_2$ such that
\begin{align*}
E^{-1}(z)& \in \mathcal L \left(L^2(\Omega)\times L^{2}(\Gamma)\right)\quad \textrm{in}\;B_{\CC,\eta_2}(0)\backslash \{0\},\\
& \mathrm{and}\; \vep^2\left\|E^{-1}(z)H(z)\right\|_{\mathcal L(L^2(\Omega)\times L^2(\Gamma))} < 1, \quad \textrm{on}\; \partial B_{\CC,\eta_2} \; \textrm{for}\; \vep \in (0,\vep_{\eta_2}).
\end{align*}
Furthermore, utilizing statement \eqref{b2} of Lemma \ref{le:5} and statement \eqref{f2} of Lemma \ref{le:8} again, we easily find
\begin{align*}
E(z) \begin{bmatrix}
    -\frac{\rho_0}{\rho_1} SL_{z/c_0} S_0^{-1} 1\\
    S_0^{-1} 1 
\end{bmatrix} = z^2h(z),
\end{align*}
where $h(z)$ is analytic in $L^2(\Omega)\times L^{2}(\Gamma)$ and $h(0) \ne (0,0)$. This implies that $0$ is a point where $E(z)$ fails to be injective and that the null multiplicity of $E(0)$ equals two (see section 1.1.3 in \cite{AK09} for the definition of null multiplicity of operators). Moreover, it immediately follows from Lemma \ref{le:5} that $E(z)$ and $H(z)$ are analytic families of operators for $z \in \CC$. Therefore, setting $A(z)= E(z),\; B(z)= \vep^2 H(z), V = B_{\mathbb C,\eta_2}$ in Theorem 1.15 in \cite{AK09}, we readily obtain that for each $\vep \in (0, \vep_{\eta_2})$, there exists $z(\vep)\in \CC$ such that $Q_{\vep}(z(\vep))$ is not injective and that its multiplicity in $B_{\CC,\eta_2}$, which is denoted by $\mathcal M (Q_{\vep}( z);\partial B(\CC,\eta_2))$ (see (1.9) in \cite{AK09} for the definition of the multiplicity of operators), satisfies
\begin{align}\label{eq:118}
\mathcal M (Q_{\vep}(z); \partial B(\CC,\eta_2)) = 2, \quad  \vep \in (0, \vep_{\eta_2}), \; z\in B_{\mathbb C,\eta_2}.
\end{align}
Similarly, we can obtain that $z(\vep)$ depends continuously on $\vep$ and $\lim_{\vep\rightarrow 0}z(\vep) = 0$. 

Second, we prove statement \eqref{z2}. Denote the scaled operator of $A_{\vep}(z)$ by 
\begin{align*}
A_{\vep}(z):=\begin{bmatrix}
&\mathbb I - \left(\frac{1}{c^2_1}-\frac{1}{c^2_0}\right)\vep^2 z^2~ N_{\vep z/c_0} & \quad \left(\frac{\rho_0}{\rho_1\vep^2}-1\right) SL_{\vep z/c_0}\\
\\
&\left(\frac{1}{c^2_0}-\frac{1}{c^2_1}\right)\vep^2 z^2 \partial_\nu N_{\vep z/c_0} & \frac{\rho_0}{\rho_1\vep^2}\left(\frac 12\left(1 + \frac{\rho_1 \vep^2}{\rho_0}\right) \mathbb I + \left(1-\frac{\rho_1 \vep^2}{\rho_0}\right)K_{\vep z/c_0}^*\right)
\end{bmatrix}.
\end{align*}
Clearly, $A_{\vep}(z)\in \mathcal L \left(L^2(\Omega)\times L^{2}(\Gamma)\right)$. Observe that for each $\vep >0$ $A_{\Omega_\vep,\vep}(z)$ and $A_{\vep}(z)$ share points where they are not injective. Moreover,  
\begin{align} \label{eq:61}
A_{\vep}(z) = Q_{\vep}(\vep z),  \quad \textrm{for}\; \vep>0, \; z\in\CC.
\end{align}
Therefore, to investigate the properties of the scattering resonance as defined in Definition \ref{d1}, it suffices to examine the properties of the operator $Q_{\vep}(z)$. 
We only focus on the proof of case of $c_1 = c_0$, since the case when $c_1 \ne c_0$ can be proved in a similar manner.
In the remainder of the proof, we assume that $\vep > 0$ is sufficiently small such that $\vep < \vep_{\eta_2}/(\max_{z\in V}|z|)$.

Given an element $z(\vep)$ from a bounded subset of $\CC$ such that $Q_{\vep}(z(\vep))$ is not injective, we know that there exists $(\phi_\vep, \psi_\vep) \in L^2(\Omega)\times L^{2}(\Gamma)$ such that 
\begin{align*}
Q_{\vep}(z(\vep))
\begin{pmatrix}
\phi_\vep\\
\psi_\vep
\end{pmatrix} = 0.
\end{align*}
This also implies $\left(Q_{\vep}(z(\vep))(\phi_\vep, \psi_\vep)^{T}\right)\cdot (0, 1)=0$, i.e.
\begin{equation}\label{original-characteristic-equation}
\left[\frac{\rho_1 \vep^2}{\rho_0}\mathbb I  + \left(1-\frac{\rho_1 \vep^2}{\rho_0}\right)\left(\frac 12 \mathbb I+ K_{z(\vep)/c_0}^*\right)\right]\psi_\vep = 0 \quad \textrm{on}\; \Gamma.
\end{equation}
With the decomposition $\psi_\vep = \mathcal P \psi_\vep + \mathcal Q \psi_\vep$, it follows from \eqref{original-characteristic-equation} that
\begin{align} \label{eq:137}
\mathcal B(z(\vep)) \psi_\vep = \mathcal P \psi_\vep, 
\end{align}
where $\mathcal B(z(\vep))$ is defined by
\begin{align} \label{eq:142}
\mathcal B(z(\vep)) := \frac{\rho_1 \vep^2}{\rho_0}\mathbb I  + \mathcal P + \left(1-\frac{\rho_1\vep^2}{\rho_0}\right)\left(\frac 12 \mathbb I+ K_{ z(\vep)/c_0}^*\right).
\end{align}
Since $\lim_{\vep\rightarrow 0}z(\vep) = 0$, by utilizing statement \eqref{b2} of Lemma \ref{le:5}, we find
\begin{align*}
\left\|\mathcal B(z(\vep)) - \mathcal P -\frac {1}2 - K^*_0\right\|_{\mathcal L \left(L^{2}(\Gamma)\right)} \le C|z(\vep)|^2.
\end{align*}
With the aid of the fact that $\mathcal P + {1}/2 + K^*_0$  has an inverse in $\mathcal L(L^{2}(\Gamma))$, we have 
\begin{align} \label{eq:145}
\left\|\left(\mathcal B(z(\vep))\right)^{-1} \right\|_{\mathcal L \left(L^{2}(\Gamma)\right)} \le C.
\end{align}
Thus, we deduce from \eqref{eq:137} that
\begin{align*}
\mathcal Q \psi_\vep = \left(\mathcal B(z(\vep))\right)^{-1} \mathcal P\psi_\vep - \mathcal P\psi_\vep.
\end{align*}
This, together with \eqref{eq:140} gives 
\begin{align} \label{eq:141}
\langle\left(\mathcal B(z(\vep))\right)^{-1} \mathcal P\psi_\vep, S^{-1}_01\rangle_{S_0} =\langle \mathcal P \psi_\vep, S^{-1}_01\rangle_{S_0}.
\end{align}
Setting 
\begin{align}\label{def-l}
l_\vep:= \left(\mathcal B(z(\vep))\right)^{-1} S^{-1}_01,
\end{align}
then we rewrite \eqref{eq:141} as
\begin{align}\label{l-psi-0}
\langle l_\vep, S^{-1}_01\rangle_{S_0}
= 1.
\end{align}
Combining \eqref{eq:142}, \eqref{def-l}, \eqref{l-psi-0} and statement \eqref{b2} of Lemma \ref{le:5} gives
\begin{align}
0& = \left[\frac{\rho_1 \vep^2}{\rho_0}\mathbb I +  \left(1-\frac{\rho_1 \vep^2}{\rho_0}\right)\left(\frac 12 \mathbb I+ K_{z(\vep)/c_0}^*\right)
\right]l_\vep \notag\\
&= \left[\frac{\rho_1 \vep^2}{\rho_0} \mathbb I + \left(1-\frac{\rho_1 \vep^2 }{\rho_0}\right)\left(\frac 12 \mathbb I+ K^*_0 + \frac{(z(\vep))^2}{ c^2_0}K^{*,(2)} + \frac{(z(\vep))^3}{c^3_0} K^{*,(3)} + \mathcal R^{(1)}_{\textrm{res},\vep}\right) \right]l_\vep,\label{eq:143}
\end{align}
where 
\begin{align}\label{eq:144}
\left\|\mathcal R^{(1)}_{\textrm{res},\vep}\right\|_{\mathcal L\left(L^2(\Gamma)\right)} \le C |z(\vep)|^4.
\end{align}
With the aid of \eqref{eq:143}, \eqref{eq:144} and the identities $\left(1/2 \mathbb I +K^*_0\right) \mathcal P = 0$ and $\mathbb I = \mathcal P + \mathcal Q$, we have 
\begin{align*}
&\left(\frac 12 \mathbb I+ K_{0}^*\right) \mathcal Q l_\vep  \\
&= -\left[\frac{\rho_1 \vep^2}{\rho_0}\mathbb I-\frac{\rho_1 \vep^2}{\rho_0}\left(\frac 12 \mathbb I+ K_{0}^*\right) + \left(1-\frac{\rho_1 \vep^2 }{\rho_0}\right)\left(\frac{(z(\vep))^2}{ c^2_0}K^{*,(2)} + \frac{(z(\vep))^3}{c^3_0} K^{*,(3)} + \mathcal R^{(1)}_{\textrm{res},\vep}\right) \right] l_\vep.
\end{align*}
From this, by utilizing \eqref{eq:145}, \eqref{def-l}, \eqref{eq:143}, \eqref{eq:144} and the fact that $ (1/2 \mathbb I+K_0)\mathcal Q$ is invertible in $\mathcal L(\mathcal Q(L^2(\Gamma)))$  and $\lim_{\vep\rightarrow 0}z(\vep) = 0$, we derive
\begin{align}\label{eq:146}
\|\mathcal Q l_\vep\|_{L^2(\Gamma)} \le C \max(\vep^2, |z(\vep)|^2).
\end{align}
 Applying the operator $\mathcal P$ to the both sides of equation \eqref{eq:143}, and using \eqref{eq:144}, \eqref{eq:146} and the identities $\left(1/2 \mathbb I +K^*_0\right) \mathcal P = 0$, $\mathcal P\left(1/2 \mathbb I +K^*_0\right) \mathcal Q = 0$ and $\mathbb I = \mathcal P + \mathcal Q$ gives 
\begin{align*}
\left(\frac{\vep^2 \rho_1}{\rho_0} + \frac{(z(\vep))^2}{c^2_0} \langle K^{*,(2)} S^{-1}_01, S^{-1}_01\rangle_{S_0} + \frac{(z(\vep))^3}{c^3_0}\langle K^{*,(3)} S^{-1}_01, S^{-1}_01\rangle_{S_0
} + \mathcal R^{(2)}_{\textrm{res},\vep}\right) S^{-1}_01  = 0,
\end{align*}
where $\mathcal R^{(2)}_{\textrm{res},\vep}$ satisfies
\begin{align*}
\left\|\mathcal R^{(2)}_{\textrm{res},\vep}\right\|_{L^2(\Gamma)} \le C \max(\vep^4, \vep^2|z(\vep)|^2).
\end{align*}
From this, we find that $z(\vep)$ and $\vep$ have the same order of magnitude relative to $\vep$ as $\vep$ approaches $0$. Thus, we can use Lemma \ref{le:6} to get
\begin{align}\label{eq:148}
\frac{\rho_1\vep^2}{\rho_0} - \frac{(z(\vep))^2}{\mathcal C_\Omega c_0^2}|\Omega|-\frac{i (z(\vep))^3 \mathcal|\Omega|}{4\pi c^3_0} + \mathcal R^{(3)}_{\textrm{res},\vep} = 0,
\end{align}
where $\mathcal R^{(3)}_{\textrm{res},\vep}$ satisfies
\begin{align} \label{eq:149}
\left|\mathcal R^{(3)}_{\textrm{res},\vep}\right| \le C\vep^4.
\end{align}
Recall that $\omega_M$ is defined in \eqref{eq:45}. Dividing by the constant $\omega_M^2|\Omega|\mathcal C^{-1}_\Omega c^{-2}_0$ on both sides of \eqref{eq:148}, we end up with the following characteristic equation for estimating the resonance:
\begin{align}\label{Final-characteristic-equation}
\vep^2 - \frac{(z(\vep))^2}{\omega^2_M}-i\frac{(z(\vep))^3 \mathcal C_\Omega}{4\pi c_0 \omega^2_M} + \frac{R^{(3)}_{\textrm{res},\vep}\mathcal C_\Omega c^2_0}{\omega^2_M|\Omega|} =0.
\end{align}
Note that $\mathcal R^{(3)}_{\textrm{res},\vep}$ satisfies \eqref{eq:149}. We look for the solution of the form $z(\vep)= \vep \beta_0 + \beta_1 \vep^2 + z_{res}(\vep) $ with $|z_{res}(\vep)| \le C\vep^3$. Plugging it into the equation \eqref{Final-characteristic-equation} and equating the terms of the same order of $\vep$, we get 
\begin{align*}
\beta_0:=\pm {\omega_M}, \quad \beta_1=-i\frac{\omega^2_M \mathcal C_\Omega }{8\pi c_0}.
\end{align*}
Therefore, with the aid of \eqref{eq:118}, for each sufficiently small $\vep>0$, we can find only two points $\widetilde z_{\pm}(\vep)$ where $Q_\vep(\widetilde z_{\pm}(\vep))$ fails to be injective and $\widetilde z_{\pm}(\vep)$ satisfy
\begin{align*}
\left|\widetilde z_{\pm}(\vep) \mp \omega_M \vep + i\frac{\omega^2_M \mathcal C_\Omega }{8\pi c_0} \vep^2 \right| \le C\vep^3, 
\end{align*}
whence the assertion of this statement follows from \eqref{eq:61} and the equivalence of Definition \ref{d2} and Definition \ref{d1}.
\end{proof}

\begin{remark}\label{re:1}
Statement \eqref{z2} of Lemma \ref{le:a1} implies that within any compact set $V\subset \CC$ containing $\pm \omega_M$, the resolvent $R^H_{\rho_\vep,k_\vep}(z)$ with sufficiently small $\vep>0$ exhibits two unique scattering resonances $z_{\pm}(\vep)$, both situated in the lower half complex plane. Furthermore, the two sequences of resonances $z_{\pm}(\vep)$ converge to $\pm \omega_M$, respectively, at the order of $\vep$ as the radius of the bubble $\vep$ tends to zero. 
\end{remark}

\section*{Acknowledgment} We thank Arpan Mukherjee and Soumen Senapati, from the Radon institute, for helpful discussions which largely inspired this work. This work is supported by the Austrian Science Fund (FWF) grant P: 36942.

\end{document}